
\documentclass{alggeom}
\usepackage{mathtools, microtype, mathrsfs, xfrac, amssymb, enumerate, xspace, stmaryrd, amsthm}
\usepackage[alphabetic]{amsrefs}
\usepackage{ascii}
\usepackage[latin1]{inputenc}
\usepackage{tikz-cd}
\usepackage[all,cmtip]{xy}
\usepackage{hyperref}
\hypersetup{
	colorlinks=true,
	linkcolor=blue,
	filecolor=magenta,      
	urlcolor=cyan,
}
\usepackage{faktor}
\usepackage{cleveref}
\usepackage{comment}
\usepackage{graphicx}
\usepackage{aliascnt}
\usepackage{float}

\numberwithin{equation}{section}

\setcounter{tocdepth}{1}

\numberwithin{subsection}{section}

\allowdisplaybreaks[1]


\newtheorem{theorem}{Theorem}[section]
\newtheorem{proposition}[theorem]{Proposition}
\newtheorem{proposition-definition}[theorem]
{Proposition-Definition}
\newtheorem{corollary}[theorem]{Corollary}
\newtheorem{lemma}[theorem]{Lemma}

\theoremstyle{definition}
\newtheorem{definition}[theorem]{Definition}

\newtheorem{remark}[theorem]{Remark}

\theoremstyle{remark}




\newcommand\cA{\mathcal{A}} \newcommand\cB{\mathcal{B}}
\newcommand\cC{\mathcal{C}} 
\newcommand\cE{\mathcal{E}} 
 \newcommand\cH{\mathcal{H}}
 
 \newcommand\cL{\mathcal{L}}
\newcommand\cM{\mathcal{M}} 
\newcommand\cO{\mathcal{O}} \newcommand\cP{\mathcal{P}}
 
\newcommand\cS{\mathcal{S}} 
\newcommand\cU{\mathcal{U}} 
 \newcommand\cX{\mathcal{X}}
\newcommand\cY{\mathcal{Y}} \newcommand\cZ{\mathcal{Z}}

\renewcommand\AA{\mathbb{A}} 

\newcommand\GG{\mathbb{G}} \newcommand\HH{\mathbb{H}}

 \newcommand\PP{\mathbb{P}}

 \newcommand\VV{\mathbb{V}}
 
 \newcommand\ZZ{\mathbb{Z}}

\newcommand\rma{\mathrm{a}}

\newcommand\rmm{\mathrm{m}}


\newcommand\arr{\ifinner\to\else\longrightarrow\fi}

\newcommand\arrto{\ifinner\mapsto\else\longmapsto\fi}

\renewcommand\H{\operatorname{H}}

\newcommand\into{\hookrightarrow}

\newcommand\im[1]{\operatorname{im}(#1)}

\def\displaytimes_#1{\mathrel{\mathop{\times}\limits_{#1}}}

\def\displayotimes_#1{\mathrel{\mathop{\bigotimes}\limits_{#1}}}

\newcommand\curshom{\mathop{\mathcal{H}\hskip -.8pt om}\nolimits}

\newcommand\spec{\operatorname{Spec}}


\newlength{\ignora}

\renewcommand{\setminus}{\smallsetminus}


\newcommand{\gm}{\GG_{\rmm}}

\newcommand{\GL}{\mathrm{GL}}

\newcommand{\PGL}{\mathrm{PGL}}
\newcommand{\ga}{\GG_{\rma}}

\DeclareFontFamily{U}{mathx}{\hyphenchar\font45}
\DeclareFontShape{U}{mathx}{m}{n}{
	<5> <6> <7> <8> <9> <10>
	<10.95> <12> <14.4> <17.28> <20.74> <24.88>
	mathx10
}{}
\DeclareSymbolFont{mathx}{U}{mathx}{m}{n}
\DeclareFontSubstitution{U}{mathx}{m}{n}
\DeclareMathAccent{\widecheck}{0}{mathx}{"71}
\DeclareMathAccent{\wideparen}{0}{mathx}{"75}

\renewcommand{\epsilon}{\varepsilon}


\newcommand{\Mbar}{\overline{\cM}}

\newcommand{\Mtilde}{\widetilde{\mathcal M}}

\newcommand{\Ctilde}{\widetilde{\mathcal C}}

\newcommand{\Htilde}{\widetilde{\cH}}

\newcommand{\Hbar}{\overline{\cH}}

\newcommand{\ThTilde}{\widetilde{\Theta}}

\newcommand{\Detilde}{\widetilde{\Delta}}

\newcommand{\Debar}{\overline{\Delta}}

\newcommand{\ch}[1][*]{\operatorname{CH}^{#1}}


\newcommand{\mt}{\widetilde{\mathcal M}}


\begin{document}

\title{The (almost) integral Chow ring of $\Mtilde_3^7$}
\author{Michele Pernice}
\email{m.pernice(at)kth.se}
\address{KTH, Room 1642, Lindstedtsv\"agen 25,
114 28, Stockholm }

\classification{14H10 (primary), 14H20 (secondary)}

\keywords{Intersection theory of moduli of curves, $A_r$-stable curves}

\begin{abstract}
	This paper is the third in a series of four papers aiming to describe the (almost integral) Chow ring of $\Mbar_3$, the moduli stack of stable curves of genus $3$. In this paper, we compute the Chow ring of $\Mtilde_3^7$ with $\ZZ[1/6]$-coefficients.
\end{abstract}

\maketitle
\section*{Introduction}

The geometry of the moduli spaces of curves has always been the subject of intensive investigations, because of its manifold implications, for instance in the study of families of curves. One of the main aspects of this investigation is the intersection theory of these spaces, which can used to solve either geometric, enumerative or arithmetic problems regarding families of curves. In his groundbreaking paper \cite{Mum}, Mumford introduced the intersection theory with rational coefficients for the moduli spaces of stable curves. After almost two decades, Edidin and Graham introduced in \cite{EdGra} the intersection theory of global quotient stacks (therefore in particular for moduli stacks of stable curves) with integer coefficients.

To date, several computations have been carried out. While the rational Chow ring of $\cM_g$, the moduli space of smooth curves, is known for $2\leq g\leq 9$ (\cite{Mum}, \cite{Fab}, \cite{Iza}, \cite{PenVak}, \cite{CanLar}), the complete description of the rational Chow ring of $\Mbar_g$, the moduli space of stable curves, has been obtained only for genus $2$ by Mumford and for genus $3$ by Faber in \cite{Fab}. As expected, the integral Chow ring is even harder to compute: the only complete description of the integral Chow ring of the moduli stack of stable curves is the case of $\Mbar_2$, obtained by Larson in \cite{Lar} and subsequently with a different strategy by Di Lorenzo and Vistoli in \cite{DiLorVis}. It is also worth mentioning the result of Di Lorenzo, Pernice and Vistoli regarding the integral Chow ring of $\Mbar_{2,1}$, see \cite{DiLorPerVis}.

The aim of this series of four papers is to describe the Chow ring with $\ZZ[1/6]$-coefficients of the moduli stack $\Mbar_3$ of stable genus $3$ curves. This provides a refinement of the result of Faber with a completely indipendent method, which has the potential to give a more algorithmic way to compute these Chow rings. The approach is a generalization of the one used in \cite{DiLorPerVis}: we introduce an Artin stack, which is called the stack of $A_r$-stable curves, where we allow curves with $A_r$-singularities to appear. The idea is to compute the Chow ring of this newly introduced stack in the genus $3$ case and then, using localization sequence, find a description for the Chow ring of $\Mbar_3$. The stack $\Mtilde_{g,n}$ introduced in \cite{DiLorPerVis} is cointained as an open substack inside our stack.

\subsection*{Outline of the paper}

This is the third paper in the series. It focuses on computing the Chow ring of the moduli stack of $A_7$-stable curves of genus $3$.

Specifically, \Cref{sec:1} is dedicated to recall the theory of $A_r$-stable curves as discussed in \cite{Per1} and the theory of hyperelliptic $A_r$-stable curves as in \cite{Per2}. In particular, we state again the main results needed for this paper, such as the existence of the contraction morphism and the description of the hyperelliptic locus. In the last part of the section, we discuss the strategy adopted for the computation. Specifically, we introduce a stratification of $\Mtilde_3^7$ by smoooth substacks and state the gluing lemma (see \Cref{lem:gluing}) used for patching together the Chow rings of the strata to get the Chow ring of the whole stack.  

In \Cref{sec:2}, we use the description of the moduli stack of hyperelliptic $A_r$-stable curves as cyclic covers of twisted curves of genus $0$ to describe the Chow ring of the hyperelliptic stratum.
We also compute the normal bundle relative to this stratum which is an essential element for the patching lemma.

In \Cref{sec:3}, we prove that non-hyperelliptic $A_r$-stable curves of genus $3$ without separating nodes are canonically embedded inside $\PP^2$. This gives us a way to describe the open stratum of the stratification as an open inside the quotient stack of quartics in $\PP^2$ by an action of $\GL_3$. Inspired by the computations in \cite{DiLorFulVis}, we manage to compute the Chow ring of the open stratum.

In \Cref{sec:4}, \Cref{sec:5} and \Cref{sec:6}, we describe the remaining strata and compute their Chow rings using such descriptions. We also compute the normal bundle relative to every strata and some cycles that will be essential for the final computation.

Finally, \Cref{sec:7} explains how to apply the gluing lemma to get a description of the Chow ring of the whole stack $\Mtilde_3^7$ using generators and relations coming from the Chow rings of the strata. Such description is the content of \Cref{theo:main} whereas the explicit relations are listed in \Cref{rem:relations-Mtilde}.
 
\section{Preliminaries and strategy}\label{sec:1}

In this section, we recall the definition of the moduli stack $\Mtilde_{g,n}^r$ parametrizing $n$-pointed $A_r$-stable curves of genus $g$ and some results regarding this moduli stack. For a more detailed treatment of the subject, see \cite{Per1}. Furthermore, we recall the definition of the moduli stack $\Htilde_g^r$ parametrizing hyperelliptic $A_r$-stable curves of genus $g$ and the alternative description using the theory of cyclic covers. For a more detailed treatment of the subject, see \cite{Per2}. Finally, we explain the strategy we use for the computation. 

\subsection*{Moduli stack of $A_r$-stable curves}
Fix a nonnegative integer $r$. Let $g$ be an integer with $g\geq 2$ and $n$ be a nonnegative integer.

\begin{definition}	Let $k$ be an algebraically closed field and $C/k$ be a proper reduced connected one-dimensional scheme over $k$. We say the $C$ is a \emph{$A_r$-prestable curve} if it has at most $A_r$-singularity, i.e. for every $p\in C(k)$, we have an isomorphism
		$$ \widehat{\cO}_{C,p} \simeq k[[x,y]]/(y^2-x^{h+1}) $$ 
	with $ 0\leq h\leq r$. Furthermore, we say that $C$ is $A_r$-stable if it is $A_r$-prestable and the dualizing sheaf $\omega_C$ is ample. A $n$-pointed $A_r$-stable curve over $k$ is $A_r$-prestable curve together with $n$ smooth distinct closed points $p_1,\dots,p_n$ such that $\omega_C(p_1+\dots+p_n)$ is ample.
\end{definition}
\begin{remark}
	Notice that a $A_r$-prestable curve is l.c.i by definition, therefore the dualizing complex is in fact a line bundle. 
\end{remark}

We fix a base field $\kappa$ where all the primes smaller than $r+1$ are invertible. Every time we talk about genus, we intend arithmetic genus, unless specified otherwise. We recall a useful fact.

\begin{remark}\label{rem:genus-count}
	Let $C$ be a connected, reduced, one-dimensional, proper scheme over an algebraically closed field. Let $p$ be a rational point which is a singularity of $A_r$-type. We denote by $b:\widetilde{C}\arr C$ the partial normalization at the point $p$ and by $J_b$ the conductor ideal of $b$. Then a straightforward computation shows that \begin{enumerate}
		\item if $r=2h$, then $g(C)=g(\widetilde{C})+h$;
		\item if $r=2h+1$ and $\widetilde{C}$ is connected, then $g(C)=g(\widetilde{C})+h+1$,
		\item if $r=2h+1$ and $\widetilde{C}$ is not connected, then $g(C)=g(\widetilde{C})+h$.
	\end{enumerate}
	If $\widetilde{C}$ is not connected, we say that $p$ is a separating point. Furthermore, Noether formula gives us that $b^*\omega_C \simeq \omega_{\widetilde{C}}(J_b^{\vee})$.
\end{remark}

 We can define $\Mtilde_{g,n}^r$ as the fibered category over $\kappa$-schemes  whose objects are the datum of $A_r$-stable curves over $S$ with $n$ distinct sections $p_1,\dots,p_n$ such that every geometric fiber over $S$ is a  $n$-pointed $A_r$-stable curve. These families are called \emph{$n$-pointed $A_r$-stable curves} over $S$. Morphisms are just morphisms of $n$-pointed curves.

We recall the following description of $\Mtilde_{g,n}^r$. See Theorem 2.2 of \cite{Per1} for the proof of the result. 

\begin{theorem}\label{theo:descr-quot}
	$\Mtilde_{g,n}^r$ is a smooth connected algebraic stack of finite type over $\kappa$. Furthermore, it is a quotient stack: that is, there exists a smooth quasi-projective scheme X with an action of $\GL_N$ for some positive $N$, such that 
	$ \Mtilde_{g,n}^r \simeq [X/\GL_N]$.
\end{theorem}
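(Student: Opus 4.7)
The plan is to follow the classical Hilbert-scheme argument that presents $\Mbar_{g,n}$ as a smooth quotient stack, suitably enlarged to allow $A_r$-singularities. I would proceed in three steps: smoothness via deformation theory, construction of the $\GL_N$-quotient presentation via a uniformly very ample line bundle, and connectedness via a smoothing argument.

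For smoothness, I would use that an $A_r$-stable curve $C$ is local complete intersection, so the cotangent complex $L_{C/\kappa}$ is perfect of tor-amplitude $[-1,0]$, and deformations of the pointed curve $(C, p_1, \dots, p_n)$ are controlled by $\ext^i(L_{C/\kappa}(\sum p_j), \cO_C)$. The hypersurface singularity $\widehat{\cO}_{C,p}\simeq \kappa[[x,y]]/(y^2 - x^{h+1})$ has miniversal deformation the unfolding $y^2 = x^{h+1}+a_{h-1}x^{h-1}+\dots+a_0$, which is smooth because $\cha \kappa$ avoids the primes $\leq r+1$. Thus the local obstruction group at each singular point vanishes, and by the local-to-global spectral sequence combined with smoothness of deformations of smooth pointed curves, the global $\ext^2$ vanishes as well. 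One then obtains that $\Mtilde_{g,n}^r$ is smooth of relative dimension $3g-3+n$ over $\kappa$.

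For the quotient stack presentation, ampleness of $\omega_C(\sum p_i)$ together with a uniform Castelnuovo--Mumford regularity bound (depending only on $g, n, r$, and holding because $A_r$-curves are Gorenstein with bounded embedding dimension) yields an integer $k\gg 0$ such that $L_k\eqdef \omega_C(\sum p_i)^{\otimes k}$ is very ample with $H^1(C,L_k)=0$ and $N\eqdef h^0(L_k)=k(2g-2+n)-g+1$ independent of the curve. The universal $L_k$ together with a choice of basis of $H^0(L_k)$ exhibits $\Mtilde_{g,n}^r$ as $[X/\GL_N]$, where $X$ is the locally closed subscheme of the pointed Hilbert scheme of $\PP^{N-1}$ parametrizing $n$-pointed $A_r$-stable curves embedded by a chosen basis of global sections of $L_k$. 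Smoothness of $X$ follows from smoothness of $\Mtilde_{g,n}^r$ since $X\to \Mtilde_{g,n}^r$ is a $\GL_N$-torsor, and quasi-projectivity is built into the construction inside the Hilbert scheme.

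Finite type is then immediate from quasi-projectivity of $X$, and for connectedness I would note that the open substack $\cM_{g,n}\subset \Mtilde_{g,n}^r$ of smooth pointed curves is dense (every $A_r$-singularity admits unobstructed local smoothings, so each point of $\Mtilde_{g,n}^r$ specializes from a smooth curve) and is classically connected for $g\geq 2$. The main obstacle is producing the uniform $k$: one must verify that, for every $A_r$-stable pointed curve, global sections of $L_k$ separate points and tangent vectors even at the worst $A_r$-singularities, and that $H^1(L_k)$ vanishes uniformly. I would handle this via Serre duality applied to the Gorenstein dualizing sheaf $\omega_C$, combined with the bounded embedding dimension of $A_r$-singularities to obtain the regularity estimate independently of the specific curve.
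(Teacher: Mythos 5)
Note first that this paper does not actually prove the statement: it is recalled from Theorem 2.2 of \cite{Per1}, so there is no in-text argument to compare yours against. Taken on its own terms, your outline follows the classical route (deformation theory for smoothness, a Hilbert-scheme construction for the quotient presentation, smoothability for connectedness), and the smoothness and connectedness steps are essentially sound: $A_r$-singularities are plane-curve hypersurface singularities, so the sheaf of local second Ext groups of $\Omega_C(p_1+\dots+p_n)$ vanishes, the first local Ext sheaf is a skyscraper, and the local-to-global spectral sequence kills $\ext^2$; the same vanishing gives surjectivity of global deformations onto the product of local deformations, which is the (implicit) ingredient you need for every $A_r$-stable curve to be smoothable and hence for density of $\cM_{g,n}$.

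The one genuine slip is in the quotient presentation. With $X$ defined as you wrote it --- a locally closed subscheme of the pointed Hilbert scheme of $\PP^{N-1}$ --- the group $\GL_N$ acts on $X$ only through $\PGL_N$, since scalars act trivially on $\PP^{N-1}$. Hence $\gm$ lies in the stabilizer of every point, and $[X/\GL_N]$ is a $\gm$-gerbe over $\Mtilde_{g,n}^r$, not $\Mtilde_{g,n}^r$ itself; what is true for that $X$ is $[X/\PGL_N]\simeq \Mtilde_{g,n}^r$, which is not the statement, and your claim that $X\rightarrow \Mtilde_{g,n}^r$ is a $\GL_N$-torsor fails for the same reason. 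The standard repair is to take $X$ to be the frame bundle: the scheme parametrizing an embedded pointed curve together with an isomorphism $\cO^{\oplus N}\simeq \pi_*L_k$, equivalently the $\gm$-torsor over your Hilbert locus given by the choice of isomorphism $\cO(1)\vert_C\simeq L_k$. Then $\GL_N$ acts by changing the frame, the stabilizer of a point is exactly $\aut(C,p_1,\dots,p_n)$ (possibly positive-dimensional, which is allowed for an Artin stack), $X\rightarrow \Mtilde_{g,n}^r$ is an honest $\GL_N$-torsor, and $X$ is still smooth and quasi-projective, being quasi-affine over a locally closed subscheme of the Hilbert scheme. Your phrase ``together with a choice of basis of $\H^0(L_k)$'' suggests you intend exactly this, but the scheme you actually wrote down forgets that choice up to scalar. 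As for the uniform very ampleness that you flag as the main obstacle: this is a real but known point, settled by the embedding criterion for Gorenstein curves (a line bundle whose degree on every subcurve $Z$ is at least $2p_a(Z)+1$ is very ample, by Catanese--Franciosi--Hulek--Reid); $\omega_C(p_1+\dots+p_n)^{\otimes k}$ meets this bound for a $k$ depending only on $g$ and $n$ by stability, and $\H^1$-vanishing follows from Serre duality, so that gap is fillable along the lines you indicate.
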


\begin{remark}\label{rem: max-sing}
Recall that we have an open embedding $\Mtilde_{g,n}^r  \subset \Mtilde_{g,n}^s$ for every $r\leq s$. Notice that $\Mtilde_{g,n}^r=\Mtilde_{g,n}^{2g+1}$ for every $r\geq 2g+1$. 
\end{remark}

The usual definition of the Hodge bundle extends to our setting. See Proposition 2.4 of \cite{Per1} for the proof. As a consequence we obtain a locally free sheaf $\HH_{g}$ of rank~$g$ on $\mt_{g, n}^r$, which is called \emph{Hodge bundle}. 

Furthermore, we recall the existence of the (minimal) contraction morphism. This is Theorem 2.5 of \cite{Per1}.

\begin{theorem}\label{theo:contrac}
	We have a morphism of algebraic stacks 
	$$ \gamma:\Mtilde_{g,n+1}^r \longrightarrow \Ctilde_{g,n}^r$$
	where $\Ctilde_{g,n}^r$ is the universal curve of $\Mtilde_{g,n}^r$. Furthermore, it is an open immersion and its image is the open locus $\Ctilde_{g,n}^{r,\leq 2}$ in $\Ctilde_{g,n}^r$ parametrizing $n$-pointed $A_r$-stable curves $(C,p_1,\dots,p_n)$ of genus $g$ and a (non-necessarily smooth) section $q$ such that $q$ is an $A_h$-singularity for $h\leq 2$.
\end{theorem}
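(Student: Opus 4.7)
The plan is to construct $\gamma$ as a relative contraction, identify its image by a local analysis at the singularities, and establish the open immersion property by producing a local inverse. Given an $(n+1)$-pointed $A_r$-stable family $(\pi\colon C\to S,\,p_1,\dots,p_{n+1})$, I form
$$\bar C\;:=\;\Proj_S\bigoplus_{m\geq 0}\pi_*\omega_{C/S}(p_1+\cdots+p_n)^{\otimes m},$$
which is well-defined because $A_r$-curves are l.c.i., so $\omega_{C/S}$ is a line bundle. A fibrewise semiampleness argument shows this is a flat family of $A_r$-stable $n$-pointed curves $(\bar\pi\colon\bar C\to S,\,\bar p_1,\dots,\bar p_n)$ equipped with a canonical $S$-contraction $\varphi\colon C\to\bar C$; the triple $(\bar C,\bar p_\bullet,\,q:=\varphi\circ p_{n+1})$ then defines the sought morphism $\gamma$.

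\emph{Characterization of the image.} On each geometric fibre, $\varphi$ collapses exactly the maximal subcurve $Z$ on which $\omega_{C/S}(p_1+\cdots+p_n)$ has degree zero; the $A_r$-stability of $(C,p_1,\dots,p_{n+1})$ forces every connected component of $Z$ to contain $p_{n+1}$. A short case analysis, using the degree-zero condition together with the genus relations from \Cref{rem:genus-count} to count special points on $Z$, enumerates the possible shapes of $Z$ and verifies that the corresponding image in $\bar C$ is an $A_h$-singularity with $h\leq 2$. Configurations that would produce $h\geq 3$ are ruled out because they require $Z$ to contain an extra marked point (contradicting distinctness of the sections of the $(n+1)$-pointed family) or to carry internal singularities violating $A_r$-prestability.

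\emph{Inverse on the open locus.} On $\Ctilde_{g,n}^{r,\leq 2}$ I construct the inverse $\delta$ étale-locally around the singular section $q$ by inserting a canonical rational component passing through $q$ and carrying $p_{n+1}$: take $p_{n+1}:=q$ when $q$ is smooth; insert a rational bridge when $q$ is a node (the classical Knudsen construction); and perform the analogous explicit local model reversal when $q$ is a cusp. The universal property of $\Proj$ yields $\gamma\circ\delta=\id$, while $\delta\circ\gamma=\id$ reduces to the uniqueness of the unstable configuration through $p_{n+1}$ recovered from the target data. The principal technical obstacle is the cuspidal ($A_2$) case: one must write down the explicit functorial local insertion whose contraction reintroduces an $A_2$-singularity and verify that it patches together consistently over $S$; the nodal case is classical and the smooth case is tautological.
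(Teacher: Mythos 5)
You should know at the outset that this paper contains no proof of \Cref{theo:contrac}: the statement is recalled from the first paper of the series, and the proof is deferred entirely to Theorem 2.5 of \cite{Per1}. So your proposal can only be measured against the expected argument, which is the Knudsen-style one you outline: contract via $\proj_S$ of the section ring of $\omega_{C/S}(p_1+\cdots+p_n)$, identify the contracted subcurve fibrewise, and invert by inserting rational components. Your first two steps are essentially right. Stability of the $(n+1)$-pointed curve forces the degree-zero locus $Z$ to be a single smooth rational component through $p_{n+1}$, attached to the rest of the curve either at two nodes, or at one node while also carrying exactly one marking $p_i$, or along a tacnode; these contract respectively to a node, a smooth point (which is then the image of $p_i$), and a cusp, so the image does land in $\Ctilde_{g,n}^{r,\leq 2}$. (The flatness and base-change statements needed for the $\proj$ construction are asserted rather than proved, but they are standard.)

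The genuine gaps are in the inverse. First, ``take $p_{n+1}:=q$ when $q$ is smooth'' fails on the locus where $q$ coincides with one of the markings $p_i$: the sections of an $(n+1)$-pointed $A_r$-stable curve must be disjoint, so this prescription does not define a point of $\Mtilde_{g,n+1}^r$. That locus lies in the image of $\gamma$ --- it is hit precisely by the curves whose contracted $\PP^1$ carries both $p_i$ and $p_{n+1}$ --- so, exactly as in Knudsen, you must bubble off a rational component there as well. Second, and more seriously, the cuspidal case that you defer as ``the principal technical obstacle'' is precisely the new content of the theorem relative to Knudsen; leaving it unexecuted means the theorem is proved only where it is classical. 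The insertion can in fact be made canonical and global, with no patching argument: if $q$ is a cusp, let $\tilde C\to C$ be the normalization at $q$, with preimage $\tilde q$, glue $\PP(T_{\tilde q}\tilde C\oplus\cO)$ to $\tilde C$ along the first-order neighbourhood of $\tilde q$ (a tacnode, with the tautological identification of tangent spaces), and let $p_{n+1}$ be the section at infinity; one checks that this configuration has no extra automorphisms and that contracting the attached $\PP^1$ recreates the cusp, since the functions constant along the $\PP^1$ form the subring $k+\mathfrak{m}_{\tilde q}^2$ of $\cO_{\tilde C,\tilde q}$. Finally, this construction exposes a point your uniform-in-$r$ sketch never confronts: the inserted curve has an $A_3$-singularity, so it lies in $\Mtilde_{g,n+1}^r$ only when $r\geq 3$. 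For $r=2$ no curve in $\Mtilde_{g,n+1}^2$ can contract onto a cuspidal $q$ (tacnodes are forbidden upstairs and $p_{n+1}$ must be smooth), so the cuspidal locus is not in the image and the surjectivity claim, as you would prove it, fails in that case; a complete proof must either restrict $r$ or treat this degenerate case separately.
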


\subsection*{The hyperelliptic locus}

We recall the definition of hyperelliptic $A_r$-stable curves.

\begin{definition}
	Let $C$ be an $A_r$-stable curve of genus $g$ over an algebraically closed field. We say that $C$ is hyperelliptic if there exists an involution $\sigma$ of $C$ such that the fixed locus of $\sigma$ is finite and the geometric categorical quotient, which is denoted by $Z$, is a reduced connected nodal curve of genus $0$. We  call the pair $(C,\sigma)$ a \emph{hyperelliptic $A_r$-stable curve} and such $\sigma$  is called a \emph{hyperelliptic involution}.
\end{definition}

We define $\Htilde_g^r$ as the following fibered category: its objects are the data of a pair $(C/S,\sigma)$ where $C/S$ is an $A_r$-stable curve over $S$ and $\sigma$ is an involution of $C$ over $S$ such that $(C_s,\sigma_s)$ is a $A_r$-stable hyperelliptic curve of genus $g$ for every geometric point $s \in S$. These are called \emph{hyperelliptic $A_r$-stable curves over $S$}. A morphism is a morphism of $A_r$-stable curves that commutes with the involutions. 

Now we introduce another description of $\Htilde_g^r$, useful for understanding the link with the smooth case, using cyclic covers of twisted curves. We refer to \cite{AbOlVis} for the theory of twisted nodal curves, although we consider only twisted curves with $\mu_2$ as stabilizers and with no markings.

\begin{definition}\label{def:hyp-A_r}
	Let $\cZ$ be a twisted nodal curve of genus $0$ over an algebraically closed field. We denote by $n_{\Gamma}$ the number of stacky points of $\Gamma$ and by $m_{\Gamma}$ the number of intersections of $\Gamma$ with the rest of the curve for every $\Gamma$ irreducible component of $\cZ$. Let $\cL$ be a line bundle on $\cZ$ and $i:\cL^{\otimes 2} \rightarrow \cO_{\cZ}$ be a morphism of $\cO_{\cZ}$-modules.  We denote by $g_{\Gamma}$ the quantity $n_{\Gamma}/2-1-\deg\cL\vert_{\Gamma}$. 
	\begin{itemize}
	\item[(a)] We say that $(\cL,i)$ is hyperelliptic if the following are true:
			\begin{itemize}
				\item[(a1)] the morphism $\cZ \rightarrow B\GG_m$ induced by $\cL$ is representable,
				\item[(a2)] $i^{\vee}$ does not vanish restricted to any stacky point.
			\end{itemize}
	\item[(b)] We say that $(\cL,i)$ is $A_r$-prestable and hyperelliptic of genus $g$ if $(\cL,i)$ is hyperelliptic, $\chi(\cL)=-g$ and the following are true:
		\begin{itemize}
			\item[(b1)] $i^{\vee}$ does not vanish restricted to any irreducible component of $\cZ$ or equivalently the morphism $i:\cL^{\otimes 2 }\rightarrow \cO_{\cZ}$ is injective,
			\item[(b2)] if $p$ is a non-stacky node and $i^{\vee}$ vanishes at $p$, then $r\geq 3$ and the vanishing locus $\VV(i^{\vee})_p$ of $i^{\vee}$ localized at $p$ is a Cartier divisor of length $2$;
			\item[(b3)] if $p$ is a smooth point and $i^{\vee}$ vanishes at $p$, then the vanishing locus $\VV(i^{\vee})_p$ of $i^{\vee}$ localized at $p$ has length at most $r+1$.
		\end{itemize}	
	\item[(c)] We say that $(\cL,i)$ is $A_r$-stable and hyperelliptic of genus $g$ if it is $A_r$-prestable and hyperelliptic of genus $g$ and the following are true for every irreducible component $\Gamma$ in $\cZ$:
			\begin{itemize}
				\item[(c1)] if $g_{\Gamma}=0$ then we have $2m_{\Gamma}-n_{\Gamma}\geq 3$,
				\item[(c2)] if $g_{\Gamma}=-1$ then we have
				$m_{\Gamma}\geq 3$ ($n_{\Gamma}=0$).
			\end{itemize}
\end{itemize}
\end{definition}

Let us define now the stack classifying these data. We denote by $\cC(2,g,r)$ the fibered category defined in the following way: the objects are triplet $(\cZ\rightarrow S,\cL,i)$ where $\cZ \rightarrow S$ is a family of twisted curves of genus $0$, $\cL$ is a line bundle of $\cZ$ and $i:\cL^{\otimes 2}\rightarrow \cO_{\cZ}$ is a morphism of $\cO_{\cZ}$-modules such that the restrictions $(\cL_s,i_s)$ to the geometric fibers over $S$ are $A_r$-stable and hyperelliptic of genus $g$. Morphisms are defined as in \cite{ArVis}. 

We recall the following result, which gives us an alternative description of $\Htilde_g^r$. See Proposition 2.14 and Proposition 2.21 of \cite{Per2} for the proof of the result.
\begin{proposition}\label{prop:descr-hyper}
	The fibered category $\cC(2,g,r)$ is isomorphic to $\Htilde_g^r$.
\end{proposition}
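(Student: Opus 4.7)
The plan is to produce functors in both directions via the classical cyclic-cover / involution-quotient correspondence, and then to verify they are mutually quasi-inverse. In the forward direction $F\colon \cC(2,g,r)\to \Htilde_g^r$, given $(\cZ\to S,\cL,i)$, I form the sheaf of $\cO_\cZ$-algebras $\cA \eqdef \cO_\cZ \oplus \cL$, with multiplication determined on the $\cL\otimes \cL$-summand by $i$, and set $\widetilde{C}\eqdef \underline{\operatorname{Spec}}_\cZ \cA$. The relative coarse moduli $C\to S$ of $\widetilde{C}$ is a family of curves carrying the involution $\sigma$ coming from the $\ZZ/2$-grading on $\cA$, and the axioms of \Cref{def:hyp-A_r} are engineered so that $C$ is $A_r$-stable of genus $g$ with quotient a reduced nodal curve of genus $0$.

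For the inverse $G\colon \Htilde_g^r\to \cC(2,g,r)$, I start with $(C/S,\sigma)$ and form the stack quotient $[C/\langle\sigma\rangle]$; over geometric points this is not yet a twisted curve at the nodes of $C$ fixed by $\sigma$, so I apply the appropriate root-stack construction along such loci to obtain a twisted curve $\cZ$ of genus $0$. Writing $\pi\colon C\to \cZ$ for the quotient map, the pushforward $\pi_*\cO_C$ decomposes canonically into $\pm 1$-eigenspaces as $\cO_\cZ\oplus \cL$, and the restriction of the algebra multiplication to $\cL\otimes \cL$ produces the map $i\colon \cL^{\otimes 2}\to \cO_\cZ$. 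Representability of $\cZ\to B\gm$ via $\cL$ follows because the $\mmu_2$-action on the fibers of $C\to \cZ$ at stacky points is by the non-trivial character, and $\chi(\cL)=\chi(\cO_C)-\chi(\cO_\cZ)=-g$ is immediate.

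The main obstacle is the étale-local matching of singularity types with the stability conditions of \Cref{def:hyp-A_r}, and this is where I expect most of the work. I would enumerate the geometric points $p\in \cZ$ and compute the complete local ring of $C$ over $p$: at a smooth point $p$ with $i^\vee(p)\neq 0$ the cover is étale; at a smooth point where $i^\vee$ vanishes to order $h+1$ the cover acquires an $A_{2h}$-singularity, and the bound in (b3) precisely ensures $h\leq r$; at a non-stacky node where $i^\vee$ is nonzero the cover is étale over a node; at a non-stacky node where $i^\vee$ vanishes with Cartier length $2$, condition (b2) is exactly what produces an $A_{2h+1}$-singularity in the cover; and at a stacky node, axioms (a1) and (a2) force the cover to be étale, yielding an ordinary node on $C$. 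Combining this case analysis with \Cref{rem:genus-count} component by component also shows that (c1)--(c2) are equivalent to ampleness of $\omega_C$, so $F$ and $G$ do land in the claimed stacks.

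The verifications that $F\circ G$ and $G\circ F$ are canonically isomorphic to the identity are then essentially formal: for $G\circ F$, the quotient of $\underline{\operatorname{Spec}}_\cZ(\cO_\cZ\oplus \cL)$ by its tautological involution returns $(\cZ,\cL,i)$ up to unique isomorphism; for $F\circ G$, the pushforward $\pi_*\cO_C$ reconstructs the algebra $\cO_\cZ\oplus \cL$ with the correct multiplication. Since all constructions are compatible with arbitrary base change in $S$, this yields a natural equivalence of fibered categories.
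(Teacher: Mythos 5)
The paper itself contains no proof of this proposition: it is recalled verbatim from Propositions 2.14 and 2.21 of \cite{Per2}, so your argument can only be measured against the strategy of that paper, which is indeed the Arsie--Vistoli-style cyclic-cover correspondence you outline. Your forward functor $F$ is essentially sound, with one redundancy worth flagging: under (a1) and (a2) the cover $\underline{\operatorname{Spec}}_{\cZ}(\cO_\cZ\oplus\cL)$ is \emph{already} an algebraic space, because at a stacky point $i$ is invertible and the stabilizer $\mu_2$ acts on $\cL$ by the nontrivial character, hence acts freely on the two points of the fiber; no relative coarse moduli is needed (and if it were genuinely needed, the output would not obviously be $A_r$-prestable).

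The genuine gap is in the inverse functor $G$, precisely at the step you identify as the main obstacle. Your recipe --- ``$[C/\langle\sigma\rangle]$ is not yet a twisted curve at the nodes of $C$ fixed by $\sigma$, so apply root stacks along such loci'' --- is backwards on both counts. At a fixed node whose branches are \emph{preserved} by $\sigma$ (local model $\{xy=0\}$, $\sigma(x,y)=(-x,-y)$), the stack quotient is already the balanced twisted node $[\{xy=0\}/\mu_2]$: nothing needs to be changed there, and a further root stack would introduce spurious extra stackiness. Where $[C/\langle\sigma\rangle]$ genuinely fails to be a twisted curve is (i) at the \emph{smooth} fixed points of $\sigma$ (the branch points), where it has a stacky point on the smooth locus, forbidden for twisted curves with no markings, and (ii) at fixed singular points whose two formal branches are \emph{exchanged} by $\sigma$ (e.g.\ the node of an irreducible curve $y^2=f(x)$ with $f$ having a double root, or a fixed cusp), where the local quotient $[\{xy=0\}/(x\leftrightarrow y)]$ has smooth coarse space with a nontrivial stabilizer and is not a twisted-curve local model at all. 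At all of these points the correct operation is \emph{coarsening} (the geometric quotient is smooth there and $C\to Z$ is already flat), not root-stacking, which moves in the opposite direction. The correct $\cZ$ is the partial coarsening of $[C/\langle\sigma\rangle]$ that keeps the stacky structure exactly at the preserved-branch fixed nodes --- equivalently, $Z$ with $\mu_2$-stacky structure imposed at precisely those nodes --- and this choice is exactly what makes $\pi\colon C\to\cZ$ finite \emph{flat} of degree $2$; with your $\cZ$ the decomposition $\pi_*\cO_C=\cO_\cZ\oplus\cL$ with $\cL$ invertible, on which the rest of your argument rests, fails. Separately, your singularity dictionary is wrong: if $i^{\vee}$ vanishes to order $h+1$ at a smooth point, the cover $y^2=x^{h+1}\cdot(\mathrm{unit})$ has an $A_h$-singularity, not $A_{2h}$ (and over a non-stacky node, condition (b2) produces exactly a tacnode $A_3$). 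With your dictionary, (b3) would allow singularities up to $A_{2r}$ on the cover, contradicting $A_r$-stability, so the claimed equivalence between the conditions of \Cref{def:hyp-A_r} and $A_r$-stability of $C$ would not come out.
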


Finally, we recall the following theorem which is a conseguence of Proposition 2.23 and Section 3 of \cite{Per2}.

\begin{proposition}\label{prop:smooth-hyp}
	The moduli stack $\Htilde_g^r$ of $A_r$-stable hyperelliptic curves of genus $g$ is smooth and the open $\cH_g$ parametrizing smooth hyperelliptic curves is dense in $\Htilde_g^r$. In particular $\Htilde_g^r$ is connected. Moreover, the natural morphism 
	$$ \Htilde_g^r \longrightarrow \Mtilde_g^r$$
	defined by the association $(C,\sigma) \mapsto C$ is a closed embedding between smooth algebraic stacks.
\end{proposition}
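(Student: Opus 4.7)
The plan is to work throughout with the cyclic-cover description $\cC(2,g,r) \simeq \Htilde_g^r$ furnished by \Cref{prop:descr-hyper}, and to address smoothness, density of $\cH_g$, and the closed embedding into $\Mtilde_g^r$ in turn.

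For the smoothness, I would introduce the stack $\cT_0$ of unmarked twisted nodal curves of genus $0$ with at most $\mmu_2$-stabilizers, which is smooth by \cite{AbOlVis}, and work over the relative Picard stack $\operatorname{Pic}_{\cT_0}$ parametrizing pairs $(\cZ,\cL)$ with $\chi(\cL)=-g$ and with $\cZ\to B\gm$ representable (this is an open and closed condition). Then $\cC(2,g,r)$ sits inside the total space of $\pi_{\ast}\cL^{\otimes -2}$ (where $\pi$ is the universal twisted curve) as the open substack cut out by the pointwise conditions in \Cref{def:hyp-A_r}, since specifying the morphism $i\colon\cL^{\otimes 2}\to\cO_{\cZ}$ is equivalent to specifying a global section of $\cL^{\otimes -2}$. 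A Riemann--Roch computation on the twisted curve, together with the representability of $\cZ\to B\gm$, forces $R^1\pi_{\ast}\cL^{\otimes -2}=0$ at every geometric fibre; cohomology and base change then promote $\pi_{\ast}\cL^{\otimes -2}$ to a genuine vector bundle over the smooth base, and $\Htilde_g^r$ inherits smoothness as an open substack of its total space.

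To prove that $\cH_g$ is dense, I would construct for every geometric point $(\cZ,\cL,i)$ of $\cC(2,g,r)$ an explicit one-parameter deformation that simultaneously smooths the twisted curve $\cZ$ to an ordinary $\PP^1$ and perturbs $i$ so that $\VV(i^{\vee})$ becomes a reduced divisor consisting of $2g+2$ distinct smooth points; the generic fibre is then a smooth double cover of $\PP^1$, i.e.\ an object of $\cH_g$. The smoothing of $\cZ$ is the standard deformation of twisted genus-$0$ curves; the perturbation of $i$ is a transversality/Bertini argument applied to global sections of $\cL^{\otimes -2}$ on the deformed family, which is made possible by the vector-bundle structure of $\pi_{\ast}\cL^{\otimes -2}$ proved in the previous paragraph. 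Since $\cH_g$ is classically known to be connected (it is a $B\mmu_2$-gerbe over the configuration space of $2g+2$ unordered points in $\PP^1$ modulo $\PGL_2$), density immediately implies that $\Htilde_g^r$ is connected.

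Finally, for the closed embedding, the key point is that the hyperelliptic involution $\sigma$ is uniquely determined by $C$. On $\cH_g$ this is classical for $g\geq 2$ and extends across the boundary because $\sigma$ is pinned down by acting as $-1$ on the Hodge bundle $\HH_g$ together with its compatibility with the contraction morphism of \Cref{theo:contrac}; this makes the forgetful functor $\Htilde_g^r\to\Mtilde_g^r$ fully faithful, hence a monomorphism, and in particular representable. Properness then follows from the valuative criterion, since over a DVR an involution on the generic fibre of an $A_r$-stable curve extends uniquely to the whole family by separatedness of $\underaut(C/S)$. A proper monomorphism of algebraic stacks is a closed immersion, and combined with the smoothness already established we obtain the desired closed embedding of smooth algebraic stacks. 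The main obstacle in this program is the density step: one must verify that \emph{every} boundary combinatorial type of $(\cZ,\cL,i)$ admits a smoothing staying inside $\Htilde_g^r$, which requires a case analysis on the dual graph of $\cZ$ and on the vanishing pattern of $i^{\vee}$ to check that the $A_r$-stability inequalities (c1)--(c2) are preserved along the deformation.
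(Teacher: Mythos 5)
The paper does not actually prove this proposition: it is recalled verbatim from Proposition 2.23 and Section 3 of \cite{Per2}, so your attempt must be measured against the argument given there. Your first step (smoothness) does follow the same route as \cite{Per2} — realize $\cC(2,g,r)\simeq\Htilde_g^r$ inside the total space of $\pi_*\cL^{\otimes-2}$ over a smooth stack of pairs (twisted genus-$0$ curve, line bundle) — but it contains a false intermediate claim. Representability of $\cZ\to B\gm$ together with $\chi(\cL)=-g$ does \emph{not} force $R^1\pi_*\cL^{\otimes -2}=0$ on every geometric fibre of the Picard stack: on a reducible fibre, $\cL$ may have arbitrarily positive degree on one component (compensated on another), making $\cL^{\otimes -2}$ of negative degree there and $H^1\neq 0$. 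The vanishing only holds after restricting the base to the open locus where $\deg\cL^{\otimes-2}\vert_\Gamma\geq 0$ on every component — a bound which is forced by condition (b1) of \Cref{def:hyp-A_r}, but which lives on fibres of the bundle you have not yet constructed; so the argument must be reordered (restrict the base first, then take the pushforward). Your density step is, by your own admission, not carried out (the case analysis on dual graphs is exactly the content that would need to be supplied); note also that it can be bypassed: once the base is known to be connected and smooth, the total space is irreducible, so any nonempty open substack such as $\cH_g$ is automatically dense, and connectedness of $\Htilde_g^r$ follows.

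The genuinely failing step is the closed embedding. First, properness: you assert that an involution on the generic fibre of $C/R$, $R$ a DVR, ``extends uniquely to the whole family by separatedness of $\underaut(C/S)$.'' Separatedness of the automorphism group space yields at most \emph{uniqueness} of an extension, never \emph{existence}; existence would require properness of $\underaut(C/S)\to S$, and this fails precisely for $A_r$-stable curves, whose stabilizers are positive-dimensional affine groups (a cuspidal curve has $\gm$'s and $\ga$'s in its automorphism group — this non-properness is the entire reason the stack $\Mtilde_{g}^r$ is useful, cf. $\Mtilde_{1,1}^2\simeq[\AA^2/\gm]$). Extending the hyperelliptic involution across a worse-than-nodal special fibre is the hard point of Section 3 of \cite{Per2}, and your sketch simply assumes it. Second, the monomorphism step rests on uniqueness of the hyperelliptic involution for \emph{singular} $A_r$-stable curves, and your proposed mechanism — that $\sigma$ is pinned down by acting as $-1$ on $\HH_g$ plus compatibility with the contraction morphism — does not suffice: for a curve in $\Detilde_{1,1,1}$ (three elliptic tails on a $\PP^1$), the automorphism acting as $-1$ on each tail and as the identity on the $\PP^1$ acts as $-1$ on the whole Hodge bundle yet is not a hyperelliptic involution (its fixed locus is infinite), so the $(-1)$-eigenspace condition cannot by itself isolate the involution. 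Uniqueness is a real theorem about $A_r$-stable curves, proved in \cite{Per2} by analyzing the quotient map and the canonical sheaf, and it is an input your proposal would need to establish rather than invoke.
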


\subsection*{Strategy of the computation}

In the rest of this section we explain the stratey used for the computation of the Chow ring of $\Mtilde_3^7$. The idea is to use a gluing lemma, whose proof is an exercise in homological algebra.

Let $i:\cZ\hookrightarrow\cX$ be a closed immersion of smooth global quotient stacks over $\kappa$ of codimension $d$ and let $\cU:=\cX\setminus \cZ$ be the open complement and $j:\cU \hookrightarrow \cX$ be the open immersion. It is straightforward to see that the pullback morphism $i^*:\ch(\cX)\rightarrow \ch(\cZ)$ induces a morphism $ \ch(\cU) \rightarrow \ch(\cZ)/(c_d(N_{\cZ|\cX}))$, where $N_{\cZ|\cX}$ is the normal bundle of the closed immersion. This morphism is denoted by $i^*$ by abuse of notation. 

Therefore, we have the following commutative diagram of rings:
$$
\begin{tikzcd}
\ch(\cX) \arrow[d, "j^*"] \arrow[rr, "i^*"] &  & \ch(\cZ) \arrow[d, "q"]     \\
\ch(\cU) \arrow[rr, "i^*"]                  &  & \frac{\ch(\cZ)}{(c_d(N_{\cZ|\cX}))}
\end{tikzcd}
$$
where $q$ is just the quotient morphism.

\begin{lemma}\label{lem:gluing}
  In the situation above, the induced map 
  $$\zeta: \ch(\cX)\longrightarrow \ch(\cZ)\times_\frac{\ch(\cZ)}{(c_d(N_{\cZ|\cX}))} \ch(\cU)$$
  is surjective and $\ker \zeta= i_* {\rm Ann}(c_d(N_{\cZ|\cX}))$. In particular, if $c_d(N_{\cZ|\cX})$ is a non-zero divisor in $\ch(\cZ)$, then $\zeta$ is an isomorphism. 
 \end{lemma}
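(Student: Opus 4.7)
The plan is to combine the localization exact sequence for equivariant (global quotient) Chow rings of Edidin--Graham with the self-intersection formula for the regular embedding $i$. Since $\cX$ and $\cZ$ are smooth and $i$ is a regular embedding of codimension $d$, one has the exact sequence
$$ \ch(\cZ) \xrightarrow{i_*} \ch(\cX) \xrightarrow{j^*} \ch(\cU) \to 0 $$
and the self-intersection formula $i^* i_* \alpha = c_d(N_{\cZ|\cX}) \cdot \alpha$ for all $\alpha \in \ch(\cZ)$, together with $j^* \circ i_* = 0$. These two facts drive everything.

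For surjectivity of $\zeta$, I would take a compatible pair $(\alpha, \beta) \in \ch(\cZ) \times \ch(\cU)$, so $q(\alpha) = i^*(\beta)$ in $\ch(\cZ)/(c_d(N_{\cZ|\cX}))$. Using that $j^*$ is surjective, lift $\beta$ to some $\widetilde{\beta} \in \ch(\cX)$. Then $\alpha - i^*(\widetilde{\beta})$ lies in the ideal $(c_d(N_{\cZ|\cX}))$, so I can write it as $c_d(N_{\cZ|\cX}) \cdot \gamma = i^* i_*(\gamma)$ by the self-intersection formula. The element $\widetilde{\beta} + i_*(\gamma) \in \ch(\cX)$ then pulls back to $\alpha$ under $i^*$ and to $\beta$ under $j^*$ (using $j^* i_* = 0$), producing the desired preimage.

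For the kernel computation, suppose $\zeta(x) = 0$, i.e.\ $j^*(x) = 0$ and $i^*(x) = 0$. Exactness of the localization sequence gives $x = i_*(y)$ for some $y \in \ch(\cZ)$, and the self-intersection formula turns $i^*(x) = 0$ into $c_d(N_{\cZ|\cX}) \cdot y = 0$, so $y \in \operatorname{Ann}(c_d(N_{\cZ|\cX}))$. Conversely, any such $y$ satisfies $i^* i_*(y) = c_d(N_{\cZ|\cX}) \cdot y = 0$ and $j^* i_*(y) = 0$, showing $i_*(y) \in \ker \zeta$. The final clause is immediate: if $c_d(N_{\cZ|\cX})$ is a non-zero-divisor, then $\operatorname{Ann}(c_d(N_{\cZ|\cX})) = 0$, hence $\ker \zeta = 0$ and $\zeta$ is an isomorphism.

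There is no real obstacle here beyond invoking the correct tools; the whole argument is a diagram chase. The only subtle point is ensuring the self-intersection formula applies in this equivariant/quotient-stack setting, but this is standard given that both $\cX$ and $\cZ$ are smooth global quotients and $i$ is automatically a regular embedding, so $N_{\cZ|\cX}$ is a vector bundle of rank $d$ and $c_d(N_{\cZ|\cX})$ makes sense in $\ch^d(\cZ)$.
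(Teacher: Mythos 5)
Your proof is correct. The paper itself does not spell out an argument for this lemma---it is dismissed as ``an exercise in homological algebra''---and what you wrote is exactly the intended one: the Edidin--Graham localization sequence $\ch(\cZ)\xrightarrow{i_*}\ch(\cX)\xrightarrow{j^*}\ch(\cU)\to 0$ together with the self-intersection formula $i^*i_*(\alpha)=c_d(N_{\cZ|\cX})\cdot\alpha$ are precisely the two inputs that make both the surjectivity chase and the identification $\ker\zeta=i_*\operatorname{Ann}\bigl(c_d(N_{\cZ|\cX})\bigr)$ go through, and your verification of each step (lifting $\beta$ along $j^*$, correcting by $i_*(\gamma)$, and the two inclusions for the kernel) is complete and applies verbatim with the paper's $\ZZ[1/6]$-coefficients.
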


From now on, we refer to the condition \emph{$c_d(N_{\cZ|\cX})$ is not a zero divisor} as the gluing condition.

\begin{remark}
	Notice that if $\cZ$ is a Deligne-Mumford separated stack, the gluing condition is never satisfied because the rational Chow ring of $\cZ$ is isomorphic to the one of its moduli space (see \cite{Vis1} and \cite{EdGra}).
	
	However, there is hope that the positive dimensional stabilizers  we have in $\Mtilde_g^r$ allow the gluing condition to occur. For instance, consider $\Mtilde_{1,1}^2$, or the moduli stack of genus $1$ marked curves with at most $A_2$-singularities. We know that 
	$$ \Mtilde_{1,1}^2 \simeq [\AA^2/\gm]$$ 
	therefore its Chow ring is a polynomial ring, which has plenty of non-zero divisors. See \cite{DiLorPerVis} for the description of $\Mtilde_{1,1}$ as a quotient stack.
\end{remark}

This is the reason why we introduced $\Mtilde_g^r$, which is a non-separated stacks. We are going to compute the Chow ring of $\Mtilde_3^r$ for $r=7$ using a stratification for which we can apply \Cref{lem:gluing} iteratively.

For the rest of the paper, we denote by $\Mtilde_{g,n}$ the stack $\Mtilde_{g,n}^{2g+1}$ which is the largest moduli stack of $n$-pointed $A_r$-stable curves of genus $g$ we can consider (see \Cref{rem: max-sing}). The same notation will be used for $\Htilde_g^r$. We denote by $\Ctilde_{g,n}$ the universal curve of $\Mtilde_{g,n}$. Therefore $\Mtilde_3^7$ will be denote $\Mtilde_3$. Recall that our base field $\kappa$ has characteristic different from $2,3,5,7$.

Every Chow ring is considered with $\ZZ[1/6]$-coefficients unless otherwise stated. This assumption is not necessary for some of the statements, but it makes our computations easier. We do not know if the gluing condition still holds with integer coefficients. 

We want to costruct a stratification of $\Mtilde_3$ to which we can apply \Cref{lem:gluing} to obtain the description of the whole Chow ring.

First of all, we recall the definitions of some substacks of $\Mbar_3$:
\begin{itemize}
	\item $\overline{\Delta}_1$ is the codimension 1 closed substack of $\Mbar_3$ classifying stable curves with a separating node, 
	\item $\overline{\Delta}_{1,1}$ is the codimension 2 closed substack of $\Mbar_3$ classifying stable curves with two separating nodes, 
	\item $\overline{\Delta}_{1,1,1}$ is the codimension 3 closed substack of $\Mbar_3$ classifying stable curves with three separating nodes. 
\end{itemize}

An easy combinatorial computation shows that we cannot have more than $3$ separating nodes for genus $3$ stable curves. The same substacks can be defined for $\Mtilde_3^r$ for any $r$ (in particular for $r=7$), but we need to prove that they are still closed inside $\Mtilde_3^r$. This is a conseguence of Lemma 4.7 of \cite{Per1}.

We denote by $\Detilde_{1,1,1}\subset \Detilde_{1,1} \subset \Detilde_1$ the natural generalization of $\Debar_{1,1,1} \subset \Debar_{1,1}\subset \Debar_1$ in $\Mtilde_3$.

We also consider $\Htilde_3^7$ as a stratum of the stratification. The following diagram
$$
\begin{tikzcd}
	&                            & \Htilde_3^7 \arrow[rd] &             \\
	{\Detilde_{1,1,1}} \arrow[r] & {\Detilde_{1,1}} \arrow[r] & \Detilde_1 \arrow[r]   & \Mtilde_3^7
\end{tikzcd}
$$
represents the poset associated to the stratification. As before, we write $\Htilde_3$ instead of $\Htilde_3^7$.

Now, we describe the strategy used to compute the Chow ring of $\Mtilde_3$. Our approach focuses firstly on the computation of the Chow ring of $\Mtilde_3 \setminus \Detilde_1$. This is the most difficult part as far as computations is concerned. We first compute the Chow ring of $\Htilde_3 \setminus \Detilde_1$, which can be done without the gluing lemma. Then we apply the gluing lemma to $\Mtilde_3 \setminus (\Htilde_3 \cup \Detilde_1)$ and $\Htilde_3 \setminus \Detilde_1$ to get a description for the Chow ring of $\Mtilde_3 \setminus \Detilde_1$.

Notice that neither $\Detilde_1$ and $\Detilde_{1,1}$ are smooth stacks therefore we cannot use \Cref{lem:gluing} to compute their Chow rings. Nevertheless, both  $\Detilde_1 \setminus \Detilde_{1,1}$ and $\Detilde_{1,1} \setminus \Detilde_{1,1,1}$ are smooth, therefore we apply \Cref{lem:gluing} to $\Detilde_1 \setminus \Detilde_{1,1}$ and $\Mtilde_3\setminus \Detilde_1$ to describe the Chow ring of $\Mtilde_3 \setminus \Detilde_{1,1}$, and then apply it again to $\Mtilde_3 \setminus \Detilde_{1,1}$ and $\Detilde_{1,1} \setminus\Detilde_{1,1,1}$. Finally, the same procedure allows us to glue also $\Detilde_{1,1,1}$ and get the description of the Chow ring of $\Mtilde_3$.
 
In the rest of the paper, we describe the following strata and their Chow rings:
\begin{itemize}
	\item $\Htilde_3\setminus \Detilde_1$,
	\item $\Mtilde_3\setminus (\Htilde_3 \cup \Detilde_1)$,
	\item $\Detilde_1\setminus \Detilde_{1,1}$,
	\item $\Detilde_{1,1}\setminus \Detilde_{1,1,1}$,
	\item $\Detilde_{1,1,1}$,
\end{itemize}
and in the last section we give the complete description of the Chow ring of $\Mtilde_3$ explaining the computations involved in the gluing procedure.

\section{Chow ring of $\Htilde_3\setminus\Detilde_1$}\label{sec:2}
 
In this section we are going to describe $\Htilde_3 \setminus \Detilde_1$ and compute its Chow ring.

Recall that we have an isomorphism (see \Cref{prop:descr-hyper}) between $\Htilde_3^7$ and $\Ctilde_3^7$. We are going to describe $\Htilde_3\setminus \Detilde_1$ as a subcategory of $\cC(2,3,7)$ through the isomorphism cited above.

Consider the natural morphism (see the proof of \Cref{prop:smooth-hyp})
$$ \pi_3: \Htilde_3 \setminus \Detilde_1 \rightarrow \cP_3$$ 
where $\cP_3$ is the moduli stack parametrizing pairs $(Z/S,\cL)$ where $Z$ is a twisted curve of genus $0$ and $\cL$ is a line bundle on $Z$. The idea is to find the image of this morphism. First of all, we can restrict to the open of $\cP_3$ parametrizing pairs $(Z/S,\cL)$ such that $Z/S$ is an algebraic space, because we are removing $\Detilde_1$. In fact, if there are no separating points, $Z$ coincides with the geometric quotient of the involution (see the proof of Proposition 2.14 of \cite{Per2}). Moreover, we prove an upper bound to the number of irreducible components of $Z$.

\begin{lemma}\label{lem:max-comp}
	Let $(C,\sigma)$ be a hyperelliptic $A_r$-stable curve of genus $g$ over an algebraically closed field and let $Z:=C/\sigma$ be the geometric quotient. If we denote by $v$ the number of irreducible components of $Z$, then we have $v\leq 2g-2$.  Furthermore, if $C$ has no separating nodes, we have that $v\leq g-1$.
\end{lemma}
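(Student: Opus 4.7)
The two inequalities are treated separately, with the first being a direct consequence of the $A_r$-stability of $C$, and the second requiring the cover-theoretic description of $(C,\sigma)$ provided by \Cref{prop:descr-hyper}.

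For the bound $v\leq 2g-2$, I plan to use ampleness of $\omega_C$. By adjunction, each irreducible component $X$ of $C$ satisfies $\deg(\omega_C|_{\tilde X})=2g(X)-2+k_X\geq 1$, where $g(X)$ is the geometric genus and $k_X$ counts the branches of nodes of $C$ on $X$. Summing over all irreducible components of $C$ and using $\deg\omega_C=2g-2$ gives $v(C)\leq 2g-2$. Since the quotient map $C\to Z$ is surjective, every irreducible component of $Z$ is the image of some component of $C$, and therefore $v\leq v(C)\leq 2g-2$.

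For the bound $v\leq g-1$ under the no-separating-nodes hypothesis, I will associate the triple $(\cZ,\cL,i)$ of \Cref{def:hyp-A_r} to $(C,\sigma)$ via \Cref{prop:descr-hyper}. The key structural claim is that this hypothesis forces $\cZ$ to have no stacky points at all (so $\cZ=Z$) and every node of $Z$ to be non-stacky with $i^\vee$ non-vanishing. This will be a two-step argument. First, a bridge argument: any stacky node of $\cZ$, or any non-stacky node with $i^\vee$ vanishing as in (b2), produces a single preimage node in $C$; since the dual graph of $Z$ is a tree, this preimage is a bridge of the dual graph of $C$, i.e., a separating node, contradicting the hypothesis. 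Second, a matching of two expressions for $\sum_\Gamma g_\Gamma$ — the direct degree computation $\sum_\Gamma g_\Gamma = s + t/2 + g - v + 1$ coming from $\deg\cL=-g-1$ (where $s$, $t$ count stacky nodes and stacky smooth points), versus an Euler characteristic identity $\sum_\Gamma \chi(C_\Gamma) = \chi(C) + 2(v-1)$ which, via orbifold Riemann--Roch $\chi(C_\Gamma)=1-g_\Gamma$, yields $\sum_\Gamma g_\Gamma = g - v + 1$ once all $Z$-nodes are of type I — forcing $t=0$.

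With this structure in place, $Z$ is an ordinary tree of $v$ rational curves with $\sum_\Gamma m_\Gamma = 2(v-1)$, and the stability conditions (c1)--(c2) give $m_\Gamma\geq 3$, $2$, $1$ for components with $g_\Gamma$ equal to $-1$, $0$, or at least $1$, respectively. Writing $v_{-1}$, $v_0$, $v_{\geq 1}$ for the cardinalities of each class, the identity $\sum_\Gamma m_\Gamma=2(v-1)$ rearranges to $v_{\geq 1}\geq v_{-1}+2$. Since $g_\Gamma\geq 1$ on $v_{\geq 1}$ and $g_\Gamma=-1$ only on $v_{-1}$, we obtain $\sum_\Gamma g_\Gamma\geq -v_{-1}+v_{\geq 1}\geq 2$, and comparing with $\sum_\Gamma g_\Gamma = g-v+1$ yields $v\leq g-1$.

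The main obstacle is the structural reduction showing that no-separating-nodes on $C$ eliminates all stacky structure on $\cZ$. The bridge argument handles the stacky nodes and case-(b2) nodes directly, while the Euler characteristic matching rules out the remaining stacky smooth points. Once this reduction is in place, the rest of the argument is an elementary combinatorial check on the tree $Z$ using only the stability conditions and the degree formula.
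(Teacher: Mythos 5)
Your proof is correct in substance, but it takes a genuinely different route from the paper's on both halves. For $v\leq 2g-2$ the paper argues on $Z$: it proves the positivity $2g_\Gamma-2+e_\Gamma+s_\Gamma>0$ for every component $\Gamma$ of $Z$ (using stability of the preimage $C_\Gamma$) and combines it with the identity $g=\sum_\Gamma(g_\Gamma+s_\Gamma/2)$ and the tree relation $\sum_\Gamma e_\Gamma=2(v-1)$; you argue on $C$ itself, using only that ampleness of $\omega_C$ forces degree at least $1$ on every irreducible component, so $C$ (hence its image $Z$) has at most $2g-2$ components. Your version is more elementary; note only that your adjunction formula $2g(X)-2+k_X$ is valid verbatim just for nodal singularities (for $A_r$-singularities there are further conductor contributions), but since all you use is positivity, which is ampleness itself, nothing is lost. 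For $v\leq g-1$ the paper is much quicker: with no separating nodes the canonical morphism is everywhere defined and $\sigma$-equivariant (Proposition 3.36 of \cite{Per2}), so it factors as $C\to Z\to\PP^{g-1}$ with $Z\to\PP^{g-1}$ finite; then $\cO_Z(1)$ has total degree $g-1$ and positive degree on each of the $v$ components, and the bound follows at once. Your cyclic-cover argument via \Cref{prop:descr-hyper} and \Cref{def:hyp-A_r} buys independence from the canonical morphism, at the cost of a local analysis over the nodes of $Z$ and a longer combinatorial check.

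Two of your intermediate claims are incorrect, though harmlessly so. First, the bridge argument is right for stacky nodes — their preimage is a single $A_1$-node of $C$, which is separating because $Z$ is a tree — but wrong for nodes as in (b2) of \Cref{def:hyp-A_r}: there the preimage is a \emph{tacnode} (Proposition 3.2 of \cite{Per2}, quoted in the paper's own proof), which is a separating point but not a separating node, so the hypothesis does not exclude such nodes; indeed the paper's description of $\Htilde_3\setminus\Detilde_1$ explicitly retains this locus. This does not sink your proof, because the final count never uses the absence of (b2)-nodes: once $n_\Gamma\equiv 0$, the formulas $\deg\cL=-g-1$, $\sum_\Gamma g_\Gamma=g-v+1$, $\sum_\Gamma m_\Gamma=2(v-1)$ and the stability bounds are all insensitive to where $i^\vee$ vanishes. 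Second, the Euler-characteristic matching meant to force $t=0$ should simply be deleted: in the framework the paper uses (\cite{AbOlVis}, with no markings), twisted curves carry stacky structure only at nodes, so stacky smooth points do not exist; moreover, done carefully, the age corrections at a stacky node cancel against the disappearance of the gluing condition there, so no $t/2$ term would survive to produce the contradiction you want. Finally, your bound $m_\Gamma\geq 1$ for components with $g_\Gamma\geq 1$ comes from connectedness and thus needs $v\geq 2$; the case $v=1$ should be disposed of separately, which is trivial since $g\geq 2$.
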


\begin{proof}
	Let $\Gamma$ be an irreducible component of $Z$. We denote by $g_{\Gamma}$ the genus of the preimage $C_{\Gamma}$ of $\Gamma$ through the quotient morphism, by $e_{\Gamma}$ the number of nodes lying on $\Gamma$ and by $s_{\Gamma}$ the number of nodes on $\Gamma$ such that the preimage is either two nodes or a tacnode (see Proposition 3.2 of \cite{Per2}). We claim that the stability condition on $C$ implies that
	$$ 2g_{\Gamma}-2+e_{\Gamma}+s_{\Gamma}>0$$
	for every $\Gamma$ irreducible component of $Z$. If $C_{\Gamma}$ is integral, then it is clear. Otherwise, $C_{\Gamma}$ is a union of two projective line meeting on a subscheme of length $n$. In this situation, $s_{\Gamma}=e_{\Gamma}:=m$ and the inequality is equivalent to $n+m\geq 3$, which is the stability condition for the two projective lines. The identity  $$g=\sum_{\Gamma}(g_{\Gamma}+\frac{s_{\Gamma}}{2})$$
     and the claim imply the thesis.

	 Suppose $(C,\sigma)$ is not in $\Detilde_1$. 
	 Notice that the involution $\sigma$ commutes with the canonical morphism (which is globally defined thanks to Proposition 3.36 of \cite{Per2}), because it does over the open dense substack of $\Htilde_g$ parametrizing smooth curves. If we consider the factorization of the canonical morphism  
	 $$ C \rightarrow Z \rightarrow \PP(\H^0(C,\omega_C))=\PP^{g-1}$$ 
	 we know that $\cO_Z(1)$ has degree $g-1$ because the quotient morphism $C\rightarrow Z$ has degree $2$ and $Z\rightarrow \PP^{g-1}$ is finite. It follows that $v\leq g-1$. 
\end{proof}

\begin{remark}
	The first inequality is sharp even for ($A_1$-)stable hyperelliptic curves. Let $v:=2m$ be an even number and let $(E_1,e_1),(E_2,e_2)$ be two smooth elliptic curves and $P_1,\dots,P_{2m-2}$ be $2m-2$ projective lines. We glue $P_{2i-1}$ to $P_{2i}$ in $0$ and $\infty$ for every $i=1,\dots,m-1$ and we glue $P_{2i}$ to $P_{2i+1}$ in $1$ for every $i=1,\dots,m-2$. Finally we glue $(E_1,e_1)$ to $(P_1,1)$ and $(E_2,e_2)$ to $(P_{2m-2},1)$. It is clear that the curve is $A_1$-stable hyperelliptic of genus $m+1$ and its geometric quotient has $2m$ components. The odd case can be dealt similarly.
	\begin{figure}[H]
		\centering
		\includegraphics[width=1\textwidth]{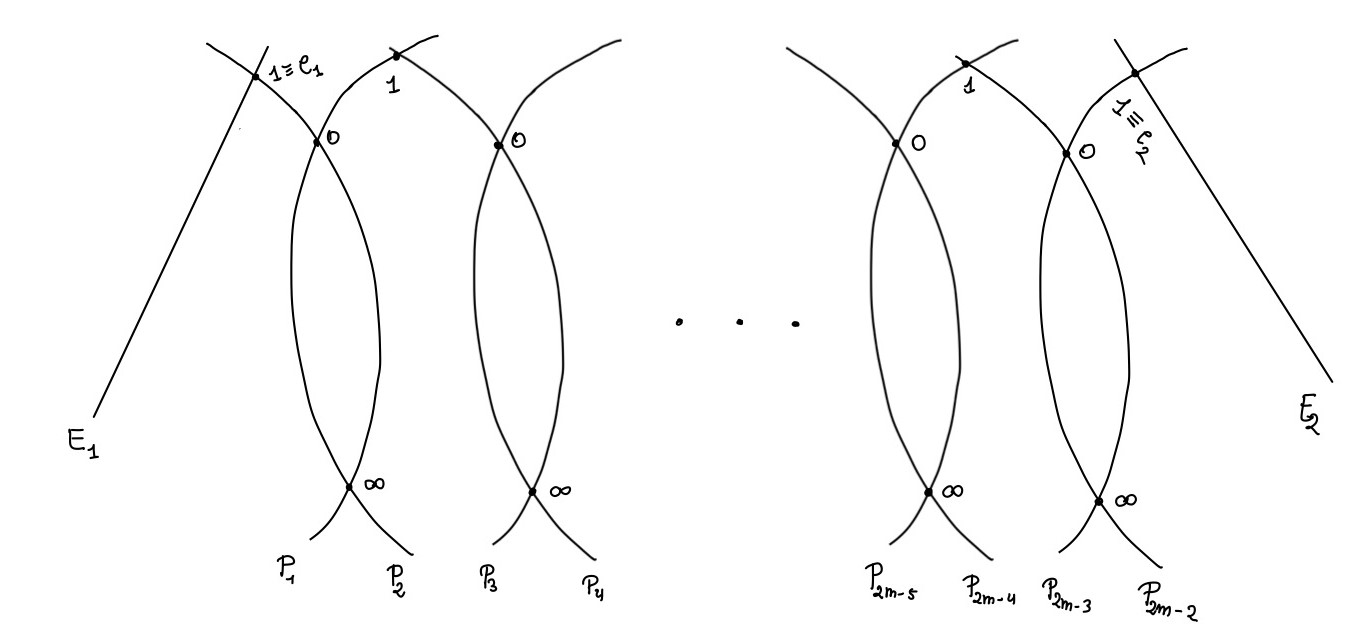}
		\label{fig:Contro}
	\end{figure}
	The same is true for the second inequality. Suppose $v:=2m$ is an even positive integer. Let $(E_1,e_1,f_1),(E_2,e_2,f_2)$ be two $2$-pointed smooth genus $1$ curves and $P_1,\dots,P_{2m-2}$ be $2m-2$ projective lines. Now we glue $P_{2i-1}$ to $P_{2i}$ in $0$ and $\infty$ for every $i=1,\dots,m-1$ and we glue $P_{2i}$ to $P_{2i+1}$ in $1$ and $-1$ for every $i=1,\dots,v-2$. Finally we glue $(E_1,e_1,f_1)$ to $(P_1,1,-1)$ and $(E_2,e_2,f_2)$ to $(P_{2m-2},1,-1)$. It is clear that the curve is $A_1$-stable hyperelliptic of genus $2m+1$ and its geometric quotient has $2m$ components. The odd case can be dealt similarly.
	\begin{figure}[H]
		\centering
		\includegraphics[width=1\textwidth]{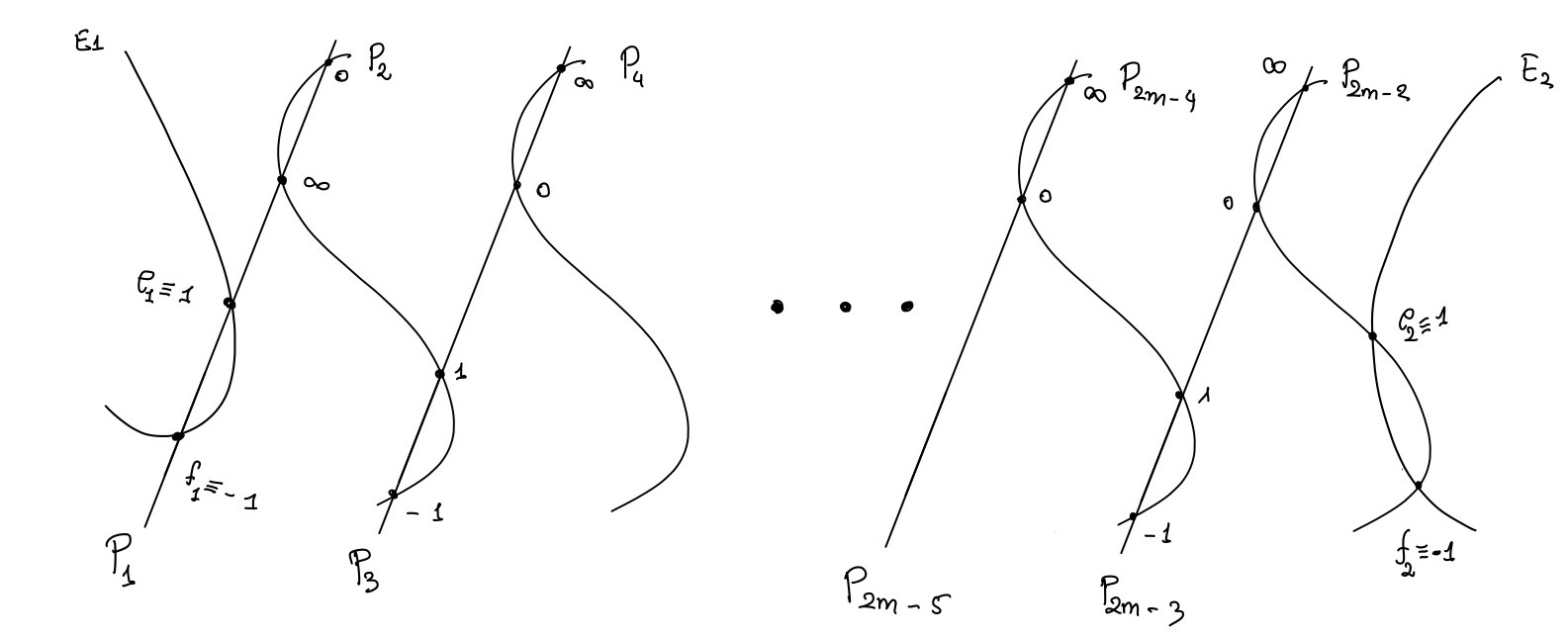}
		\label{fig:Contro1}
	\end{figure}
\end{remark}

\Cref{lem:max-comp} assures us that the geometric quotient $Z$ has at most two irreducible components if $C$ has genus $3$ and does not have separating nodes. Therefore, the datum $(Z/S,\cL,i)$ is in the image of $\pi_3$ only if the fiber $Z_s$ has at most $2$ irreducible components for every geometric point $s \in S$. Moreover, we know that not all the pairs $(\cL,i)$ give us a $A_r$-stable hyperelliptic curve of genus $3$. We need to translate the conditions in \Cref{def:hyp-A_r} in our setting. Because there are no stacky points in $Z$, the conditions in (a) are empty. Furthermore, if $Z$ is integral, we have that $\cL$ is non-canonically isomorphic to $\cO_{\PP^1}(-4)$ because $\chi(L)=-3$ and the only condition on the section $i$ is that is different from $0$ because $r=7$.

Suppose $Z$ is the union of two irreducible components $\Gamma_1$ and $\Gamma_2$  intersecting in a single node. Let us call $n_1$ and $n_2$ the degrees of the restrictions of $\cL$ to $\Gamma_1$ and $\Gamma_2$ respectively. Thus (b1)+(c1) implies that $n_1\leq -2$ and $n_2\leq -2$. Finally the condition $\chi(\cL)=-3$ gives us $n_1=n_2=-2$. 

We denote by $\cM_0^{\leq 1}$ the moduli stack parametrizing  families $Z/S$ of genus $0$ nodal curves with at most $1$ node and $\cM_0^{1}$ the closed substack of $\cM_0^{\leq 1}$ parametrizing families $Z/S$ with exactly $1$ node. We follows the notation as in \cite{EdFul2}. 

Finally, let us denote by $\cP_3'$ the moduli stack parametrizing pairs $(Z/S,\cL)$ where $Z/S$ is a family of genus $0$ nodal curves with at most 1 node and $\cL$ is a line bundle that restricted to the geometric fibers over every point $s \in S$ has degree $-4$ if $Z_s$ is integral or bidegree $(-2,-2)$ if $Z_s$ is reducible.

Therefore $\Htilde_3\setminus\Detilde_1$ can be seen as an open substack of the vector bundle $\pi_*\cL^{\otimes -2}$ of $\cP_3'$, where $(\pi:\cZ \rightarrow \cP_3',\cL)$ is the universal object of $\cP_3'$. We can explicitly describe the complement of $\Htilde_3 \setminus \Detilde_1$ inside $\VV(\pi_{*}\cL^{\otimes-2})$. The only conditions the section $i$ has to verify in this particular case are (b1) and (b2) of \Cref{def:hyp-A_r}. The closed substack of $\VV(\pi_*\cL^{\otimes -2})$ that parametrizes sections which do not verify (b1) or (b2) can be described  as the union of two components. The first one is the zero section of the vector bundle itself. The second component $D$ parametrizes sections $h$ supported on the locus $\cM_0^{\geq 1}$ of reducible genus $0$ curves which vanish at the node $n$ and such that the vanishing locus $\VV(h)_n$ localized at the node is either positive dimensional or it has length different from $2$.

\begin{remark}
	The $\gm$-gerbe $\cP_3'\rightarrow \cM_0^{\leq 1}$ is trivial, as there exists a section defined by the association $Z/S \mapsto \omega_{Z/S}^{\otimes 2}$. Therefore, $\cP_3' \simeq \cM_0^{\leq 1}\times \cB\gm$.
\end{remark} 

We have just proved the following description.

\begin{proposition}
	The stack $\Htilde_3 \setminus \Detilde_1$ is isomorphic to the stack $\VV(\pi_*\omega_{\cZ/\cP_3'}^{\otimes -4})\setminus (D \cup 0)$, where $0$ is the zero section of the vector bundle and $D$ parametrizes sections $h$ supported on $\cM_0^1 \times \cB\gm$ such that the vanishing locus $\VV(h)$ localized at the node does not have length $2$.
\end{proposition}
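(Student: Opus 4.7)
The plan is to consolidate the preceding discussion into a coherent argument. By \Cref{prop:descr-hyper}, $\Htilde_3 \simeq \cC(2,3,7)$, so I need to identify which triples $(Z/S,\cL,i)$ correspond to $\Htilde_3 \setminus \Detilde_1$ and repackage them as points of the stated vector bundle minus $D \cup 0$.

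First I would check that the pair $(Z,\cL)$ underlying an object of $\Htilde_3\setminus\Detilde_1$ is an object of $\cP_3'$. The absence of separating nodes means (as recalled from the proof of \Cref{prop:descr-hyper}) that $Z$ is the honest scheme-theoretic quotient of the involution, so there are no stacky points and condition (a) of \Cref{def:hyp-A_r} is automatic. Applying \Cref{lem:max-comp} with $g=3$ bounds the number of irreducible components of $Z$ by $2$, so $Z$ has at most one node. Combining the Euler-characteristic equality $\chi(\cL)=-3$ with the stability condition (c1) (which forces $\deg\cL|_\Gamma \leq -2$ on each rational component) then pins down $\deg\cL = -4$ in the integral case and bidegree $(-2,-2)$ in the reducible case, i.e.\ exactly the defining condition of $\cP_3'$.

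Next I would repackage the section. The datum $i\colon\cL^{\otimes 2}\to\cO_\cZ$ is equivalent to a global section of $\cL^{\otimes -2}$ on $\cZ$, so by pushforward it corresponds to a point of the vector bundle $\pi_*\cL^{\otimes -2}$ over $\cP_3'$. The trivialization of the gerbe $\cP_3' \to \cM_0^{\leq 1}$ by $Z/S \mapsto \omega_{Z/S}^{\otimes 2}$ (the Remark preceding the statement) identifies $\pi_*\cL^{\otimes -2}$ with $\pi_*\omega_{\cZ/\cP_3'}^{\otimes -4}$ up to a twist by the tautological $\gm$-character, which does not change the underlying total space as a stack. Hence each triple $(Z,\cL,i)$ gives a well-defined point of $\VV(\pi_*\omega_{\cZ/\cP_3'}^{\otimes -4})$.

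Finally I would identify the bad locus cut out by the remaining conditions (b1)--(b3). Condition (b3) is vacuous in our setting: with $r=7$ the allowed smooth vanishing length is up to $8$, which is automatic for any non-zero section of $\cL^{\otimes -2}$ in our degree range. Condition (b1) forbids $i^\vee$ from vanishing on a whole irreducible component; on the integral stratum this is exactly the complement of the zero section $0$, and on the reducible stratum $\cM_0^1\times B\gm$ it removes sections vanishing identically on one of the two $\Gamma_i$. Condition (b2) removes sections vanishing at the node with local vanishing scheme either positive-dimensional or of length $\ne 2$. The only mildly delicate point, which is where I would focus, is to observe that on the reducible stratum the two failure types of (b1) and (b2) fit together into the single closed substack $D$ described in the statement: a section vanishing on a whole component automatically produces a non-Cartier, excessive vanishing at the node, so the union is precisely $D$. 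Together with the zero section this gives $D \cup 0$, yielding the asserted isomorphism.
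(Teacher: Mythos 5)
Your moduli-theoretic analysis follows the paper's own route step for step: the reduction to $\cC(2,3,7)$ via \Cref{prop:descr-hyper}, the use of \Cref{lem:max-comp} with $g=3$ to bound the components of $Z$ by two, the degree computation forcing degree $-4$ in the integral case and bidegree $(-2,-2)$ in the reducible case, the observation that (b3) is vacuous for $r=7$, and the identification of the failure locus of (b1)--(b2) with $0\cup D$ (including the remark that vanishing on a whole component forces positive-dimensional vanishing at the node, hence lands in $D$) are all exactly the paper's argument, and this part of your proposal is correct.

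There is, however, one genuine error, and it occurs at the only point where you go beyond the paper's text: the claim that twisting by the tautological $\gm$-character ``does not change the underlying total space as a stack.'' This is false. Over $\cP_3'\simeq\cM_0^{\leq 1}\times\cB\gm$ the bundle $\pi_*\omega_{\cZ/\cP_3'}^{\otimes -4}$ is pulled back from $\cM_0^{\leq 1}$, so its total space is $\VV\bigl(\pi_*\omega_{\cZ/\cM_0^{\leq 1}}^{\otimes -4}\bigr)\times\cB\gm$, in which every point has $\gm$ inside its stabilizer; by contrast, in the total space of $\pi_*\cL^{\otimes -2}\simeq\pi_*\omega_{\cZ}^{\otimes -4}\otimes\cS^{\otimes -2}$ the gerbe's $\gm$ acts on the fibers with weight $-2$, so a nonzero section has stabilizer $\mu_2$. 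These two stacks are not isomorphic, and only the twisted one can be $\Htilde_3\setminus\Detilde_1$: the generic object there is a smooth hyperelliptic curve whose automorphism group is $\mu_2$, generated by the hyperelliptic involution, which corresponds precisely to the weight $-2$ scaling (rescaling $\cL$ by $\lambda$ moves $i^\vee$ by $\lambda^{-2}$). This is confirmed by the paper's subsequent computations, where the relations $c_9$, $D_1$, $D_2$ are all extracted from $\cS^{\otimes -2}$-twisted bundles. So the correct way to conclude is to read the statement's $\VV(\pi_*\omega_{\cZ/\cP_3'}^{\otimes -4})$ as (an abuse of notation for) $\VV(\pi_*\cL^{\otimes -2})$ with its nontrivial $\gm$-weight, rather than to assert that the twist is invisible; taken literally, your assertion would identify $\Htilde_3\setminus\Detilde_1$ with a stack having the wrong stabilizer groups.
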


Lastly, we state a fact that is well-known to experts, which is very helpful for our computations. It is one of the reasons why we choose to do the computations inverting $2$ and $3$ in the Chow rings. Consider a morphism $f:\cX \rightarrow \cY$ which is representable, finite and flat of degree $d$ between quotient stacks and consider the cartesian diagram of $f$ with itself: 
$$
 \begin{tikzcd}
\cX \times_{\cY} \cX \arrow[r, "p_1"] \arrow[d, "p_2"] & \cX \arrow[d, "f"] \\
\cX \arrow[r, "f"]                             & \cY.               
\end{tikzcd}
$$
 
\begin{lemma}\label{lem:chow-tor}
	In the situation above, the diagram in the category of groups
	$$ 
	\begin{tikzcd}
	\ch(\cY)[1/d] \arrow[r, "f^*"] & \ch(\cX)[1/d] \arrow[r, "p_1^*"', bend right] \arrow[r, "p_2^*", bend left] & \ch(\cX \times_{\cY} \cX)[1/d]
	\end{tikzcd}
	$$
	is an equalizer.
\end{lemma}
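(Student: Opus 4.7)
The plan is to exploit the basic fact that for a representable, finite, flat morphism $f$ of degree $d$ one has $f_* f^* = d\cdot \mathrm{id}$ on Chow groups, together with flat base change in the cartesian square. Both these statements are available for quotient stacks via the Edidin--Graham intersection theory. After inverting $d$, the identity $f_* f^* = d\cdot \mathrm{id}$ immediately gives that $f^*$ is split injective, so the map $f^*$ at the start of the putative equalizer is injective.

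For exactness in the middle, I would take a class $\alpha \in \ch(\cX)[1/d]$ with $p_1^*\alpha = p_2^*\alpha$ and show that $\alpha = f^*\beta$ for $\beta := \tfrac{1}{d}\, f_*\alpha \in \ch(\cY)[1/d]$. To verify this, apply $f^*$ and use flat base change for the cartesian diagram of $f$ with itself: since $f$ is flat, one has
\begin{equation*}
f^* f_* \alpha \;=\; (p_1)_* p_2^* \alpha.
\end{equation*}
By hypothesis $p_2^*\alpha = p_1^*\alpha$, and since $p_1$ is itself representable, finite and flat of degree $d$ (being the pullback of $f$), one has $(p_1)_* p_1^* \alpha = d\cdot\alpha$. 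Hence $f^*\beta = \tfrac{1}{d}(p_1)_* p_1^*\alpha = \alpha$, as required. Conversely, any class of the form $f^*\beta$ visibly satisfies $p_1^* f^*\beta = p_2^* f^*\beta$ because $f\circ p_1 = f\circ p_2$, so the equalizer condition on $\ch(\cX)[1/d]$ is exactly the image of $f^*$.

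The only point that needs a little care is the justification of flat base change and the projection formula in the setting of (possibly non-separated) quotient stacks with $\ZZ[1/d]$-coefficients; this follows from the general formalism developed by Edidin--Graham, since a representable finite flat morphism between quotient stacks is, étale-locally on the target, the quotient of an equivariant finite flat morphism of schemes to which the classical formulas apply. I do not expect this to be a real obstacle, but it is the step one should cite explicitly. Once these two formal properties are in hand, the argument above is essentially a three-line verification.
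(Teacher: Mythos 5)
Your proof is correct and follows essentially the same route as the paper, whose entire proof is the one-line remark that the lemma ``is just an application of the formula $g_*g^*(-)=d(-)$ for finite flat representable morphisms of degree $d$.'' You have simply made explicit what that application is: split injectivity of $f^*$ from $f_*f^*=d\cdot\mathrm{id}$, and middle exactness by combining flat base change in the cartesian square with the same degree formula applied to $p_1$, which is exactly the intended argument.
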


\begin{proof}
	This is just an application of the formula $g_*g^*(-)=d(-)$ for finite flat representable morphisms of degree $d$. 
\end{proof}

\begin{remark}
	
	We are interested in the following application: if $f:\cX \rightarrow \cY$ is $G$-torsor where $G$ is a costant group, we have an honest action of $G$ over $\ch(\cX)$ and the lemma tells us that the pullback $f^*$ is isomorphism between $\ch(\cY)$ and $\ch(\cX)^{G-{\rm inv}}$, if we invert the order of $G$ in the Chow groups.
\end{remark}

\subsection*{Relation coming from the zero-section}
By a standard argument in intersection theory, we know that 
$$ \ch(\VV\setminus 0)=\ch(\cX)/(c_{\rm top}(\VV))$$
for a vector bundle $\VV$ on a stack $\cX$, where $c_{\rm top}(-)$ is the top Chern class. 

To compute the Chern class, first we need to describe the Chow ring of $\cP_3'$. In \cite{EdFul2}, they compute the Chow ring of $\cM_{0}^{\leq 1}$ with integral coefficients, and their result implies that 
$$ \ch(\cP_3')=\ch(\cM_0^{\leq 1}\times \cB\gm)=\ZZ[1/6,c_1,c_2,s]$$
where $c_i:=c_i\Big(\pi_*\omega_{\cZ/\cM_0^{\leq 1}}^{\vee}\Big)$ for $i=1,2$ and $s:=c_1\Big(\pi_*(\cL\otimes\omega_{\cZ/\cM_0^{\leq 1}}^{\otimes -2})\Big)$. We recall that $(\cZ,\cL)$ is the universal object over $\cP_3'$.

\begin{remark}
	We are using the fact that $\ch(\cX\times \cB\gm)\simeq \ch(\cX)\otimes \ch(\cB\gm)$ for a smooth quotient stack $\cX$. This follows from Lemma 2.12 of \cite{Tot}.
\end{remark}

We need to compute $c_{9}(\pi_*\cL^{\otimes -4})$. If we denote by $\cS$ the line bundle on $\cP_3'$ such that $\pi^*(\cS):=\cL\otimes\omega_{\cZ/\cM_0^{\leq 1}}^{\otimes -2}$, we have that
$$ \pi_*\cL^{\otimes -4} = \pi_*\omega_{\cZ}^{\otimes -4} \otimes \cS^{\otimes -2}.$$ 

\begin{proposition}
	We have an exact sequence of vector bundles over $\cP_3'$
	$$
	0\rightarrow \det (\pi_*\omega_{\cZ}^{\vee})\otimes {\rm Sym}^2 \pi_*\omega_{\cZ}^{\vee} \rightarrow {\rm Sym}^4 \pi_*\omega_{\cZ}^{\vee} \rightarrow \pi_*\omega_{\cZ}^{\otimes -4} \rightarrow 0.$$ 
\end{proposition}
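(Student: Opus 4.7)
The plan is to realise the claimed exact sequence as the pushforward, along the projection $p \colon \PP(E) \to \cP_3'$ with $E := \pi_*\omega_{\cZ}^{\vee}$, of a twist of the ideal-sheaf sequence of the relative anticanonical embedding $\iota \colon \cZ \hookrightarrow \PP(E)$. The target identifications are that $E$ is locally free of rank $3$, that $\iota$ is a closed embedding realising $\cZ$ as a relative Cartier divisor with $I_{\iota(\cZ)} \simeq \cO(-2) \otimes p^*\det E$, and that $R^1 p_* \cO(2) = 0$; once these are in hand, the desired sequence drops out formally by pushing forward $0 \to I_{\iota(\cZ)}(4) \to \cO_{\PP(E)}(4) \to \iota_*\omega_{\cZ}^{\otimes -4} \to 0$.

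The first step is to verify that $E$ is locally free of rank $3$ via cohomology and base change, using that $\omega^{\vee}$ has positive degree on every irreducible component of every fiber so that $R^1\pi_*\omega_{\cZ}^{\vee} = 0$. The natural surjection $p^*E \to \omega_{\cZ}^{\vee}$ then defines $\iota$ with $\iota^*\cO_{\PP(E)}(1) \simeq \omega_{\cZ}^{\vee}$. On smooth fibers this is the classical Veronese embedding $\PP^1 \hookrightarrow \PP^2$; on a reducible fiber $Z = \Gamma_1 \cup \Gamma_2$ meeting at a node $q$, the partial normalization formula gives $\omega_Z^{\vee}|_{\Gamma_i} \simeq \cO_{\Gamma_i}(1)$, so each $\Gamma_i$ maps isomorphically onto a line in $\PP^2$, and compatibility at $q$ forces the two lines to be distinct and meet transversally---a nodal plane conic. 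The main technical obstacle is upgrading this fiberwise picture to a global statement: that $\iota$ is a closed embedding and that $\iota(\cZ)$ is a relative Cartier divisor of fiberwise degree $2$. I would use the fiberwise criterion for closed immersions together with the flatness of $\cZ \to \cP_3'$ and the regularity of $\PP(E)$.

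Once this is secured, $\cO_{\PP(E)}(\iota(\cZ)) \simeq \cO(2) \otimes p^*M$ for some line bundle $M$ on $\cP_3'$, which I would pin down via adjunction
$$ \omega_{\cZ} \simeq \bigl(\omega_{\PP(E)/\cP_3'} \otimes \cO_{\PP(E)}(\iota(\cZ))\bigr)\big|_{\cZ}, $$
combined with the standard identity $\omega_{\PP(E)/\cP_3'} \simeq \cO(-3) \otimes p^*\det E$ and $\iota^*\cO(1) \simeq \omega_{\cZ}^{\vee}$: the $\omega_{\cZ}$ factors cancel, yielding $\pi^*(\det E \otimes M) \simeq \cO_{\cZ}$ and hence $M \simeq (\det E)^{-1}$ by injectivity of $\pi^*$ on $\operatorname{Pic}$. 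Twisting the structure-sheaf sequence of $\iota(\cZ)$ by $\cO(4)$ and applying $p_*$, the vanishings $R^1 p_*\cO(2) = 0$ and $p_*\cO(n) = {\rm Sym}^n E$ for $n \geq 0$ produce exactly
$$ 0 \longrightarrow \det E \otimes {\rm Sym}^2 E \longrightarrow {\rm Sym}^4 E \longrightarrow \pi_*\omega_{\cZ}^{\otimes -4} \longrightarrow 0, $$
which is the claimed exact sequence.
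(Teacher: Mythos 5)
Your proposal is correct and follows essentially the same route as the paper: embed $\cZ$ anticanonically as a relative conic in the $\PP^2$-bundle over $\cP_3'$, identify its ideal sheaf as $\cO(-2)\otimes p^*\det E$ with $E=\pi_*\omega_{\cZ}^{\vee}$, then twist the ideal-sheaf sequence by $\cO(4)$ and push forward using the vanishing of $R^1$. The only difference is one of packaging: you re-derive the key inputs (that the anticanonical map is a closed embedding onto a relative Cartier conic, and the identification of $\cO(-\cZ)$ via adjunction), whereas the paper cites these facts from Proposition 6 of Edidin--Fulghesu and checks exactness of the pushforward fiberwise.
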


\begin{proof}
	Let $S$ be a $\kappa$-scheme and let $(Z/S,L)$ be an object of $\cP_3'(S)$. Consider the $S$-morphism 
	$$
	\begin{tikzcd}
	Z \arrow[rd, "\pi"] \arrow[r, "i", hook] & \PP(\cE) \arrow[d, "p"] \\
	& S                      
	\end{tikzcd}
	$$
	induced by the complete linear system of the line bundle $\omega_{Z/S}^{\vee}$, namely $\cE:=(\pi_*\omega_{Z/S}^{\vee})^{\vee}$. Then $i$ is a closed immersion and we have the following facts:
	\begin{itemize}
		\item $i^*\cO_{\PP(\cE)}(1)=\omega_{Z/S}^{\vee}$,
		\item $\cO_{\PP(\cE)}(-Z)\simeq\omega_{\PP(\cE)/S}(1)$; 
	\end{itemize} 
   see the proof of Proposition 6 of \cite{EdFul2} for a detailed discussion. Because 
   $$ \pi_*\omega_{Z/S}^{\otimes -4}=p_*i_*(\omega_{Z/S}^{\otimes -4})=p_*i^*\cO_{\PP(\cE)}(4)$$
   we can consider the exact sequence
   $$ 0 \rightarrow \cO_{\PP(\cE)}(4-Z) \rightarrow \cO_{\PP(\cE)}(4) \rightarrow i^{*}\cO_{\PP(\cE)}(4) \rightarrow 0. $$ 
   If we do the pushforward through $p$, the sequence remain exact for every geometric fiber over $S$, because $Z$ is embedded as a conic. Therefore we get 
   $$ 0\rightarrow p_*(\cO_{\PP(\cE)}(5)\otimes \omega_{\PP(\cE)/S}) \rightarrow p_*\cO_{\PP(\cE)}(4) \rightarrow \pi_*\omega_{Z}^{\otimes -4} \rightarrow 0$$ 
   and using the formula $\omega_{\PP(\cE)}=\cO_{\PP(\cE)}(-2) \otimes p^*\det \cE^{\vee}$, we get the thesis.
\end{proof}

We have found the first relation in our strata, which is
$$c_9:=\frac{c_{15}(\cS^{\otimes -2}\otimes {\rm Sym}^4 \pi_*\omega_{\cZ}^{\vee} )}{c_6(\cS^{\otimes -2}\otimes\det (\pi_*\omega_{\cZ}^{\vee})\otimes {\rm Sym}^2 \pi_*\omega_{\cZ}^{\vee})}$$
and can be described completely in terms of the Chern classes $c_1,c_2$ of $\pi_*\omega_{\cZ}^{\vee}$ and  $s=c_1(\cS)$.

\subsection*{Relations from the locus $D$}

We concentrate now on the locus $D$. First of all notice that $D$ is contained in the restriction of $\pi_*\cL^{\otimes -4}$ to $\cM_0^{1}\times \cB\gm$.

\begin{remark}
	In \cite{EdFul2}, the authors describe the stack $\cM_0^{\leq 1}$ as the quotient stack $[S/\GL_3]$ where $S$ is an open of the six-dimensional $\GL_3$-representation of homogeneous forms in three variables, namely $x,y,z$, of degree $2$. The action can be described as 
	$$ A.f(x):=\det(A)f(A^{-1}(x,y,z))$$ 
	for every $A \in \GL_3$ and $f \in S$, and the open subscheme $S$ is the complement of the closed invariant subscheme parametrizing non-reduced forms. 
	
	The proof consists in using the line bundle $\pi_*\omega_{\cZ}^{\vee}$, which is very ample, to describe the $\cM_0^{\leq 1}$ as the locus of reduced conics in $\PP^2$ with an action of $\GL_3$. For a more detailed discussion, see \cite{EdFul2}. 
	
	In this setting, $\cM_0^1$ correspond to the closed locus $S^1$ of $S$ parametrizing reducible reduced conics in $\PP^2$. It is easy to see that the action of $\GL_3$ over $S^1$ is transitive, therefore $\cM_0^1 \simeq \cB H$, with $H$ the subgroup of $\GL_3$ defined as the stabilizers of any element, say $xy \in S^1$. A straightforward computation shows that $H\simeq (\gm\ltimes \ga)^2 \ltimes C_2$, where $C_2$ is the costant group with two elements.  
\end{remark}

As we are inverting $2$ in the Chow rings, we can use \Cref{lem:chow-tor} to describe the Chow ring of $\cM_0^1$ as the invariant subring of $\ch(\cB\gm^2)$ of a specific action of $C_2$. The $\ga$'s do not appear in the computation of the Chow ring thanks to Proposition 2.3 of \cite{MolVis}. We can see that the elements of the form $(t_1,t_2,1)$ of $\gm^2 \ltimes C_2$ correspond to the matrices in $\GL_3$ of the form 
$$
\begin{pmatrix*}
	t_1 & 0 & 0 \\
	0 & t_2 & 0 \\
	0 & 0   &  1  
\end{pmatrix*};
$$
 the elements of the form $(t_1,t_2,-1)$ correspond to the matrices 
$$ 
\begin{pmatrix*}
0 & t_1 & 0 \\
t_2 & 0 & 0 \\
0 & 0   &  1  
\end{pmatrix*}.
$$
It is immediate to see that the action of $C_2$ over $\gm^2$ can be described as $(-1).(t_1,t_2)=(t_2,t_1)$. Therefore if we denote by $t_1$ and $t_2$ the generator of the Chow ring of the two copies of $\gm$ respectively, we get  
$$ \ch(\cB(\gm^2\ltimes C_2)) \simeq \ZZ[1/6, t_1+t_2, t_1t_2].$$ 

A standard computation shows the following result.

\begin{lemma}\label{lem:normal-xi}
	If we denote by $i:\cM_0^1 \into \cM_0^{\leq 1}$ the (regular) closed immersion, we have that $i^{*}(c_1)=t_1+t_2$ and $i^{*}(c_2)=t_1t_2$ and therefore $i^{*}$ is surjective. Moreover, we have the equality $[\cM_0^1]=-c_1$ in the Chow ring of $\cM_0^{\leq 1}$. 
\end{lemma}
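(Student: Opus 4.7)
The plan is to leverage the quotient-stack presentation $\cM_0^{\leq 1}\simeq [S/\GL_3]$ recalled just above, where $S$ is the open subscheme of reduced quadratic forms inside the six-dimensional $\GL_3$-representation of degree-$2$ forms in $x,y,z$. The embedding $\cZ\hookrightarrow \PP(\cE)$ with $\cE=(\pi_*\omega_{\cZ}^\vee)^\vee$, which underlies this presentation, identifies $\pi_*\omega_{\cZ}^\vee$ equivariantly with a dual of the standard $\GL_3$-representation on $S$; in particular $c_1$ and $c_2$ are pulled back from the first two Chern classes of this representation in $\ch(\cB\GL_3)$.

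For the equalities $i^*c_1=t_1+t_2$ and $i^*c_2=t_1 t_2$, I would pull back along $\cM_0^1\simeq \cB H\to \cM_0^{\leq 1}$. By Proposition~2.3 of \cite{MolVis} the unipotent radical $\ga^2\trianglelefteq H$ contributes nothing to the Chow ring after inverting the characteristic, so the computation reduces to restricting the standard representation to $\gm^2\ltimes C_2\subset \GL_3$ along the matrix embedding recorded in the paper. On the maximal torus $\gm^2$ the representation decomposes into three characters whose Chern roots are (with the chosen sign convention for the generators of $\ch(\cB\gm^2)$) exactly $t_1,t_2,0$; taking elementary symmetric functions yields $t_1+t_2$ and $t_1 t_2$, both visibly invariant under the $C_2$-swap $t_1\leftrightarrow t_2$ realized by the off-diagonal matrix. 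They therefore descend to elements of $\ch(\cB(\gm^2\ltimes C_2))=\ZZ[1/6,t_1+t_2,t_1 t_2]$, and since these elements generate the target, the surjectivity of $i^*$ follows at once.

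For the class $[\cM_0^1]=-c_1$ the plan is to exhibit $\cM_0^1$ as the vanishing locus of a $\GL_3$-equivariant section of an explicit line bundle on $\cM_0^{\leq 1}$. Concretely, the discriminant $\Delta(f):=\det M_f$, where $M_f$ is the symmetric $3\times 3$ matrix associated to $f\in \mathrm{Sym}^2$, cuts out $S^1\subset S$ scheme-theoretically, and a direct check shows that it transforms as $\Delta(A\cdot f)=\det(A)\,\Delta(f)$ under the action $A\cdot f=\det(A)\,f(A^{-1}x)$. Hence $\Delta$ is a $\GL_3$-equivariant section of the line bundle on $\cM_0^{\leq 1}$ associated to the character $\det\colon\GL_3\to\gm$, whose first Chern class in $\ch(\cB\GL_3)$ equals $c_1(\det V)$; matching this against the dualisation used in the identification of the first paragraph gives $c_1(\det V)=-c_1$, so $[\cM_0^1]=-c_1$.

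The main technical obstacle is really the bookkeeping in the very first step: pinning down consistently the projective-bundle convention implicit in the definition $\cE=(\pi_*\omega^\vee)^\vee$, the resulting identification of $\pi_*\omega^\vee$ with $V$ or $V^\vee$, and the orientation of the torus generators $t_1,t_2$ relative to the matrix embedding $\gm^2\hookrightarrow\GL_3$ — since a mismatched sign anywhere would propagate to the formula for $[\cM_0^1]$. Once these conventions are fixed, the three assertions reduce to the splitting principle together with the standard dictionary between equivariant line bundles on $\cB\GL_3$ and characters of $\GL_3$.
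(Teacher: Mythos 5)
Your proposal is correct and takes essentially the same route as the paper: the pullback formulas come from restricting the identification $\pi_*\omega_{\cZ}^{\vee}\simeq V^{\vee}$ (dual standard representation) along $\cM_0^1=[\{xy\}/H]\into[S/\GL_3]$, discarding the unipotent part of $H$ via \cite{MolVis} and computing torus characters, while the class $[\cM_0^1]=-c_1$ comes from observing that $S^1$ is cut out by the determinant of the conic, which transforms under the twisted $\GL_3$-action by the character $\det$. The paper leaves both computations as "straightforward"; your write-up supplies exactly those details (including the verification $\Delta(A\cdot f)=\det(A)\Delta(f)$ and the sign bookkeeping), and they check out.
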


\begin{proof}
	The description of $i^{*}(c_i)$ for $i=1,2$ follows from the explicit description of the inclusion 
	$$\cM_0^1 = [\{xy\}/H] \into [S/\GL_3]=\cM_0^{\leq 1}.$$
	
	Regarding the second part of the statement, it is enough to observe that $\cM_0^1=[S^1/\GL_3] \into [S/\GL_3]$ where $S^1$ is the hypersurface of $S$ described by the vanishing of the determinant of the general conic. A straightforward computation of the $\GL_3$-character associated to the determinant formula shows the result.
	\end{proof}
Finally, we focus on $D$. The vector bundle $\pi_*\cL^{\otimes -4}$ (or equivalently $\pi_*\omega_{\cZ}^{\otimes -4} \otimes \cS^{\otimes -2}$) can now be seen as a $9$-dimensional $H$-representation. Specifically, we are looking at sections of $\pi_*\omega_{\cZ}^{\otimes -4} \otimes \cS^{\otimes -2}$ on the curve $xy=0$, which are a $9$-dimensional vector space $\AA(4,4)$ parametrizing a pair of binary forms of degree $4$, which have to coincides in the point $x=y=0$. Let us denote by $\infty$ the point $x=y=0$, which is in common for the two components. With this notation, $D$ parametrizes pairs $(f(x,z),g(y,z))$ such that $f(\infty)=g(\infty)=0$ and either the coefficient of $xz^3$ or the one of $yz^3$ vanishes. 

\begin{remark}
	This follows from the local description of the $2:1$-cover. In fact, if $\infty$ is the intersection of the two components, we have that \'etale locally the double cover looks like
	$$ k[[x,y]]/(xy) \into k[[x,y,t]](t^2-h(x,y))$$ 
	where $h$ is exactly the section of $\pi_*\omega_{\cZ}^{\otimes -4} \otimes \cS^{\otimes -2}$. Because we can only allow nodes or tacnodes as fibers over a node in the quotient morphism by Proposition 3.2 of \cite{Per2}, we get that $h$ is either a unit (the quotient morphism is \'etale) or $h$ is of the form $xp(x)+yq(y)$ such that $p(0)\neq 0$ and $q(0)\neq 0$.  
\end{remark}

The action of $\gm^2\times \gm$ (where the second group of the product is the one whose generator is $s$) over the coefficient of $x^iz^{4-i}$ (respectively $y^iz^{4-i}$) can be described by the character $t_1^is^{-2}$ (respectively $t_2^is^{-2}$).

\begin{lemma}
	The ideal of relations coming from $D$ in $\VV(\pi_*\cL^{\otimes -2})\vert_{\cM_0^1 \times \cB\gm}$ is generated by the two classes $2s(4s-(t_1+t_2))$ and $2s(4s^2-2s(t_1+t_2)+t_1t_2))$. Therefore we have that the ideal of relations coming from $D$ in $\cP_3'$ is generated by the two relations $D_1:=2sc_1(c_1-4s)$ and $D_2:=2sc_1(4s^2-2sc_1+c_2)$. 
\end{lemma}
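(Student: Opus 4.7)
My plan is to present $D$ as the intersection of explicit divisors in the total space of $\VV := \VV(\pi_*\cL^{\otimes -2})\vert_{\cM_0^1 \times \cB\gm}$, compute the relevant Chern classes on a double cover, and then descend to $C_2$-invariants. From the local model $h = xp(x)+yq(y)$ at the node recalled in the previous remark, a section $h$ lies in $D$ exactly when the $z^4$-coefficient of $h$ vanishes \emph{and} at least one of the $xz^3$- or $yz^3$-coefficients vanishes; in other words $D = V_1\cap(V_2\cup V_3)$, where $V_1, V_2, V_3$ are the vanishing sub-vector-bundles of these three coefficients regarded as sections of line bundles on $\VV$. Writing $D = D'\cup D''$ with $D':= V_1\cap V_2$ and $D'':= V_1\cap V_3$ expresses $D$ as a union of two codimension-$2$ sub-vector-bundles of $\VV$.

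I would work first on the double cover $\cB\gm^2\times \cB\gm \to \cM_0^1\times\cB\gm$ (recall $\cM_0^1 \simeq \cB(\gm^2\ltimes C_2)$). The characters given in the paragraph before the lemma yield $[V_1] = -2s$, $[V_2] = t_1-2s$, $[V_3] = t_2-2s$, hence $a:=[D'] = 2s(2s-t_1)$ and $b:=[D''] = 2s(2s-t_2)$. Because each of $D', D''$ is a sub-vector-bundle of $\VV$, the pullbacks $i^*$ are surjective by homotopy invariance, so by the projection formula $i_{D'*}\ch(D') = (a)$ and $i_{D''*}\ch(D'') = (b)$. Every cycle on $D$ decomposes as a sum of cycles supported on $D'$ and $D''$, so the ideal of relations from $D$ on the cover equals $(a, b)$.

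To descend to $\cM_0^1\times \cB\gm$, I use that $C_2$-invariants is an exact functor, as $2$ is inverted. Any $C_2$-invariant element of $(a,b)$ can be written as $p_+(a+b) + p_-(a-b)$ with $p_+$ symmetric and $p_-$ antisymmetric in $t_1, t_2$. Writing $p_- = (t_1-t_2)r$ with $r$ symmetric gives $p_-(a-b) = -2s(\sigma_1^2 - 4\sigma_2)\,r$, where $\sigma_1 = t_1+t_2$ and $\sigma_2 = t_1t_2$. Thus $(a,b)^{C_2} = (D_1,\,2s(\sigma_1^2-4\sigma_2))$, and the identity $(4s - \sigma_1)D_1 - 4D_2 = 2s(\sigma_1^2 - 4\sigma_2)$ (using that $4$ is invertible) converts this into $(D_1, D_2)$, proving the first assertion. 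As a geometric sanity check one has $D_1 = [D]$ while $-D_2 = [D'\cap D'']$, which already lies in $I_D$ since $D'\cap D'' \subset D$.

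For the second assertion, I apply the projection formula along the closed immersion $\iota: \cM_0^1\times\cB\gm \hookrightarrow \cP_3'$: by \Cref{lem:normal-xi}, $\iota_*(1) = [\cM_0^1] = -c_1$ and $\iota^*$ sends $c_1, c_2$ to $\sigma_1, \sigma_2$, so lifting $D_1, D_2$ to the classes $2s(4s - c_1)$ and $2s(4s^2 - 2sc_1 + c_2)$ on $\cP_3'$ and multiplying by $-c_1$ yields the stated generators (up to an overall sign which is irrelevant for the ideal they generate). The main obstacle is the $C_2$-invariants step: one must verify that the $p_+,p_-$ decomposition together with the characterization of antisymmetric polynomials in two variables as $(t_1-t_2)\cdot(\text{symmetric})$ accounts for \emph{all} invariant elements of $(a,b)$, with no further generators slipping through in higher degree.
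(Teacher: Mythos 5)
Your proof is correct and takes essentially the same route as the paper: reduction to the $\gm^2$-cover via \Cref{lem:chow-tor}, identification of the equivariant ideal as $(2s(2s-t_1),2s(2s-t_2))$ from the decomposition $D=D'\cup D''$ and the projection formula, passage to $C_2$-invariants, and descent to $\cP_3'$ through $[\cM_0^1]=-c_1$ from \Cref{lem:normal-xi}. The step you flag as the main obstacle does hold: averaging an invariant element of $(a,b)$ over $C_2$ (legitimate since $2$ is inverted) produces exactly your $p_+(a+b)+p_-(a-b)$ decomposition, and your identity $(4s-t_1-t_2)\cdot 2s(4s-t_1-t_2)-4\cdot 2s(2s-t_1)(2s-t_2)=2s\bigl((t_1+t_2)^2-4t_1t_2\bigr)$ checks out, so no invariant elements are missed.
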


\begin{proof}
	Because of \Cref{lem:chow-tor}, we can start by computing the ideal of relations in the $\gm^2$-equivariant setting (i.e. forgetting the action of $C_2$) and then considering the invariant elements (by the action of $C_2$). It is clear the ideal of relation $I$ in the $\gm^2$-equivariant setting is of the form $(2s(2s-t_1),2s(2s-t_2))$. Thus the ideal $I^{\rm inv}$ is generated by the elements $2s(4s-(t_1+t_2))$ and $2s(2s-t_1)(2s-t_2)$.
\end{proof}

As a corollary, we get the Chow ring of $\Htilde_3\setminus \Detilde_1$. Before describing it, we want to change generators. We can express $c_1$, $c_2$ and $s$ using the classes $\lambda_1$, $\lambda_2$ and $\xi_1$ where $\lambda_i$ as usual is the $i$-th Chern class of the Hodge bundle $\HH$ and $\xi_1$ is the fundamental class of $\Xi_1$, which is defined as the pullback
$$
\begin{tikzcd}
\Xi_1 \arrow[d] \arrow[r, hook]      & \Htilde_3\setminus \Detilde_1 \arrow[d] \\
\cM_0^1\times \cB\gm \arrow[r, hook] & \cP_3'=\cM_0^{\leq 1}\times \cB\gm    
\end{tikzcd}
$$
and it parametrizes hyperelliptic curves without separating nodes such that the geometric quotient has two irreducible components.

\begin{lemma}\label{lem:lambda-class-H}
	In the setting above, we have that $s=(-\xi_1 - \lambda_1)/3$, $c_1=-\xi_1$ and $c_2= \lambda_2 - (\lambda_1^2 - \xi_1^2)/3$. Furthermore, we have the following relation
	$$\lambda_3=\frac{(\xi_1+\lambda_1)(9\lambda_2+(\xi_1+\lambda_1)(\xi_1-2\lambda_1))}{27}.$$
\end{lemma}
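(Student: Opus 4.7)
The plan is to express the Hodge bundle on $\Htilde_3\setminus\Detilde_1$ in terms of the tautological bundles on $\cP_3'=\cM_0^{\leq 1}\times\cB\gm$, from which all four identities fall out as a single application of the Chern-class tensor formula.

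First, I would compute the Hodge bundle. Let $f:\cC\to\cZ$ denote the universal double cover corresponding to the section $i:\cL^{\otimes 2}\to\cO_{\cZ}$. Since $f$ is finite flat of degree $2$ with $f_*\cO_{\cC}=\cO_{\cZ}\oplus\cL$, relative duality gives $f_*\omega_{\cC/\cZ}=\cH om(f_*\cO_{\cC},\cO_{\cZ})=\cO_{\cZ}\oplus\cL^{\vee}$, hence $f_*\omega_{\cC}=\omega_{\cZ}\oplus(\omega_{\cZ}\otimes\cL^{\vee})$. Pushing forward to the base and using that $\pi_*\omega_{\cZ}=0$ for genus $0$, I obtain $\HH=\pi_*\omega_{\cC}=\pi_*(\omega_{\cZ}\otimes\cL^{\vee})$. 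Now the definition of $\cS$ gives $\cL\cong\pi^{*}\cS\otimes\omega_{\cZ/\cP_3'}^{\otimes 2}$, so $\omega_{\cZ}\otimes\cL^{\vee}\cong\omega_{\cZ}^{\vee}\otimes\pi^{*}\cS^{\vee}$, and by the projection formula
\[
\HH\;\cong\;\pi_*\omega_{\cZ/\cP_3'}^{\vee}\otimes\cS^{\vee}.
\]

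Next, I would expand Chern classes. Writing $E:=\pi_*\omega_{\cZ/\cP_3'}^{\vee}$, which has rank $3$ with $c_1(E)=c_1$, $c_2(E)=c_2$ and $c_3(E)=0$ (it comes from $\cM_0^{\leq 1}$, which by \cite{EdFul2} has Chow ring generated in degrees $1$ and $2$), the standard tensor-twist formula applied to $E\otimes\cS^{\vee}$ with $c_1(\cS^{\vee})=-s$ yields
\[
\lambda_1=c_1-3s,\qquad \lambda_2=c_2-2c_1 s+3s^2,\qquad \lambda_3=-c_2 s+c_1 s^2-s^3.
\]
Combining this with $\xi_1=-c_1$, which is just the pullback of $[\cM_0^1]=-c_1$ from \Cref{lem:normal-xi}, the first equation gives $s=-(\xi_1+\lambda_1)/3$, and substituting into the second produces
\[
c_2=\lambda_2+2c_1 s-3s^2=\lambda_2+\tfrac{(\xi_1+\lambda_1)(\xi_1-\lambda_1)}{3}=\lambda_2-\tfrac{\lambda_1^2-\xi_1^2}{3},
\]
exactly as claimed.

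Finally, the $\lambda_3$-relation is the third equation after substitution. Setting $u:=\xi_1+\lambda_1$ so that $s=-u/3$, a direct expansion gives
\[
\lambda_3=-c_2 s+c_1 s^2-s^3=\tfrac{u\lambda_2}{3}-\tfrac{u^2\lambda_1}{9}+\tfrac{u^3}{27}=\tfrac{u\bigl(9\lambda_2+u(\xi_1-2\lambda_1)\bigr)}{27},
\]
using $u^2-3u\lambda_1=(\xi_1+\lambda_1)(\xi_1-2\lambda_1)$, which is the stated formula. None of these steps involves a serious obstacle: the only subtle point is the verification that $\HH$ really does coincide with $\pi_*\omega_{\cZ/\cP_3'}^{\vee}\otimes\cS^{\vee}$ globally over $\cP_3'$ (not only fiberwise), and this requires invoking compatibility of relative duality with base change for the flat finite morphism $f$, which is standard. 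After that the lemma reduces to routine Chern-class bookkeeping.
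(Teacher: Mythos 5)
Your proposal is correct and is essentially the paper's own proof: the paper, too, applies Grothendieck duality to the double cover $f$ to get $f_*\omega_{C/S}=\omega_{Z/S}\oplus(\omega_{Z/S}\otimes L^{\vee})$, substitutes $L\simeq\omega_{Z/S}^{\otimes 2}\otimes\pi_Z^{*}\cS$ and pushes forward to obtain $\pi_{C,*}\omega_{C/S}=\pi_{Z,*}(\omega_{Z/S}^{\vee})\otimes\cS^{\vee}$, and then leaves exactly your rank-three tensor-twist expansion implicit as ``simple computations with Chern classes''. The only imprecision is your justification of $c_3\bigl(\pi_*\omega_{\cZ/\cP_3'}^{\vee}\bigr)=0$ --- generation of $\ch(\cM_0^{\leq 1})$ in degrees $1$ and $2$ only forces $c_3$ to be a polynomial in $c_1,c_2$ --- but the vanishing is true and both proofs need it: it follows, for instance, because restriction to $\cM_0^{1}$ is injective on $\ZZ[1/6][c_1,c_2]$ by \Cref{lem:normal-xi}, and over $\cM_0^{1}$ the bundle splits off a trivial line bundle, killing its top Chern class.
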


\begin{proof}
	First of all, the relation $\xi_1=-c_1$ is clear from the construction of $\xi_1$, as we have already computed the fundamental class $\cM_0^1$ in $\ch(\cM_0^{\leq 1})$.
	
	Let $f:C\rightarrow Z$ be the quotient morphism of an object in $\Htilde_3\setminus \Detilde_1$ and let $\pi_C:C\rightarrow S$ and $\pi_Z:Z\rightarrow S$ be the two structural morphisms. Grothendieck duality implies that 
	$$ f_*\omega_{C/S} = \curshom_Z(f_*\cO_C, \omega_{Z/S})$$ 
	but because $f$ is finite flat, we know that $f_*\cO_C=\cO_Z \oplus L$, i.e. $f_*\omega_{C}=\omega_{Z}\oplus (\omega_{Z}\otimes L^{\vee})$. Recall that $L\simeq \omega_{Z/S}^{\otimes 2} \otimes \pi_Z^*\cS$ for a line bundle $\cS$ on the base. Therefore if we consider the pushforward through $\pi_Z$, we get
$$ \pi_{C,*}\omega_{C/S}=\pi_{Z,*}(\omega_{Z/S}^{\vee})\otimes \cS^{\vee}$$
and the formulas in the statement follow from simple computations with Chern classes.
 \end{proof}
\begin{corollary}\label{cor:chow-hyper}
	We have the following isomorphism of rings:
	$$\ch(\Htilde_3)= \ZZ[1/6,\lambda_1,\lambda_2,\xi_1]/(c_9,D_1,D_2)$$
	where $D_1=2\xi_1(\lambda_1+\xi_1)(4\lambda_1+\xi_1)/9$, $D_2:=2\xi_1(\xi_1+\lambda_1)(9\lambda_2+(\xi_1+\lambda_1)^2)/27$ and $c_9$  is a homogeneous polynomial of degree $9$.
\end{corollary}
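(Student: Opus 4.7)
The plan is to assemble the preceding material — which describes $\Htilde_3\setminus\Detilde_1$ (the object computed in this section, as its title indicates) as an open substack of a vector bundle on $\cP_3'$ — into the stated presentation via two excision steps and a change of variables to the Hodge-theoretic generators.

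First I take the presentation $\ch(\cP_3')=\ZZ[1/6,c_1,c_2,s]$ coming from the factorization $\cP_3'\simeq\cM_0^{\leq 1}\times\cB\gm$ noted above, together with the Edidin--Fulghesu formula for $\ch(\cM_0^{\leq 1})$ and the K\"unneth-type remark above. Pullback to the total space of the vector bundle $\pi_*\cL^{\otimes -2}\simeq\pi_*\omega_{\cZ/\cP_3'}^{\otimes -4}\otimes\cS^{\otimes -2}$ is an isomorphism on Chow rings, so it suffices to compute the ideal of relations cutting out the open complement of $0\cup D$ inside that total space.

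Next I apply excision for the complement $0\cup D$. Removing the zero section introduces the single relation $c_9=c_{\rm top}(\pi_*\cL^{\otimes -2})$; by the exact sequence in the preceding proposition (combined with the twist by $\cS^{\otimes -2}$), this is explicitly the quotient of Chern classes displayed after that proposition, yielding a homogeneous polynomial of degree $9$ in $c_1,c_2,s$. Removing $D$ introduces the two relations $D_1,D_2$ produced in the previous lemma: the argument restricts to $\cM_0^1\times\cB\gm$ (with $[\cM_0^1]=-c_1$ by \Cref{lem:normal-xi}), uses the identification $\cM_0^1\simeq \cB(\gm^2\ltimes C_2)$ to compute the ideal of $D$ at the $\gm^2$-equivariant level via the local \'etale double-cover model of the $2{:}1$ cover near the node, takes $C_2$-invariants after inverting $2$ via \Cref{lem:chow-tor}, and then pushes forward along $i_*$ to $\cP_3'$, producing $D_1=2sc_1(c_1-4s)$ and $D_2=2sc_1(4s^2-2sc_1+c_2)$.

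Finally I change variables to $\lambda_1,\lambda_2,\xi_1$ via \Cref{lem:lambda-class-H}: the identities $\xi_1=-c_1$, $s=-(\xi_1+\lambda_1)/3$, and $c_2=\lambda_2-(\lambda_1^2-\xi_1^2)/3$ form an invertible $\ZZ[1/6]$-linear change of basis (the denominators $3$ and $27$ explain why $3$ must be inverted). Substituting them into the three relations above yields exactly the expressions $D_1=2\xi_1(\lambda_1+\xi_1)(4\lambda_1+\xi_1)/9$ and $D_2=2\xi_1(\xi_1+\lambda_1)(9\lambda_2+(\xi_1+\lambda_1)^2)/27$ in the statement, and rewrites $c_9$ as a degree-$9$ polynomial in $\lambda_1,\lambda_2,\xi_1$. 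The main technical subtlety is that the rank-$3$ Hodge bundle could a priori contribute an independent generator $\lambda_3$; this is absorbed by the identity $\lambda_3=(\xi_1+\lambda_1)(9\lambda_2+(\xi_1+\lambda_1)(\xi_1-2\lambda_1))/27$ from the second half of \Cref{lem:lambda-class-H}, so the three generators suffice, and the surjectivity and injectivity of the final presentation then reduce to the invertibility of the linear change of basis $(c_1,c_2,s)\leftrightarrow(\lambda_1,\xi_1,\lambda_2)$.
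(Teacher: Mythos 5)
Your proposal is correct and follows essentially the same route as the paper: the corollary is assembled exactly from the vector-bundle description of $\Htilde_3\setminus\Detilde_1$ over $\cP_3'$, the zero-section relation $c_9$ via the displayed exact sequence, the relations $D_1,D_2$ from $D$ pushed forward using $[\cM_0^1]=-c_1$, and the change of variables of \Cref{lem:lambda-class-H}. Your closing remark about $\lambda_3$ is not strictly needed for the presentation (invertibility of the change of variables $(c_1,c_2,s)\leftrightarrow(\xi_1,\lambda_2,\lambda_1)$ over $\ZZ[1/6]$ already suffices), but it is accurate and consistent with how the paper uses that identity later.
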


\begin{remark}
    The polynomial $c_9$ has the following form:
    
    \begin{equation*}
    \begin{split}
    c_9 = & -\frac{16192}{19683}\lambda_1^9 - \frac{23200}{6561}\lambda_1^8\xi_1 - \frac{31040}{6561}\lambda_1^7\xi_1^2 +
    \frac{1376}{729}\lambda_1^7\lambda_2 - \frac{320}{6561}\lambda_1^6\xi_1^3 + \\& + \frac{4576}{243}\lambda_1^6\xi_1\lambda_2 +
    \frac{30784}{6561}\lambda_1^5\xi_1^4  + \frac{10144}{243}\lambda_1^5\xi_1^2\lambda_2 + \frac{3968}{81}\lambda_1^5\lambda_2^2 +
    \frac{16256}{6561}\lambda_1^4\xi_1^5 +  \\ & + \frac{15136}{729}\lambda_1^4\xi_1^3\lambda_2 + \frac{992}{27}\lambda_1^4\xi_1\lambda_2^2
    - \frac{320}{243}\lambda_1^3\xi_1^6 - \frac{5792}{243}\lambda_1^3\xi_1^4\lambda_2 -
    \frac{11072}{81}\lambda_1^3\xi_1^2\lambda_2^2 - \\ & - \frac{7264}{27}\lambda_1^3\lambda_2^3 - \frac{7360}{6561}\lambda_1^2\xi_1^7  -
    \frac{5216}{243}\lambda_1^2\xi_1^5\lambda_2 - \frac{11392}{81}\lambda_1^2\xi_1^3\lambda_2^2 -
    \frac{2848}{9}\lambda_1^2\xi_1\lambda_2^3 + \\ & + \frac{640}{6561}\lambda_1\xi_1^8 +  \frac{1952}{729}\lambda_1\xi_1^6\lambda_2 +
   \frac{832}{27}\lambda_1\xi_1^4\lambda_2^2 + \frac{1568}{9}\lambda_1\xi_1^2\lambda_2^3 +384\lambda_1\lambda_2^4 + \\ & +
    \frac{2912}{19683}\xi_1^9 + \frac{352}{81}\xi_1^7\lambda_2 + \frac{3808}{81}\xi_1^5\lambda_2^2 +
    \frac{5984}{27}\xi_1^3\lambda_2^3 + 384\xi_1\lambda_2^4.
    \end{split}
    \end{equation*}
    
\end{remark}

\subsection*{Normal bundle of $\Htilde_3\setminus \Detilde_1$ in $\Mtilde_3 \setminus \Detilde_1$}
We end up the section with the computation of the first Chern class of the normal bundle of the closed immersion $\Htilde_3\setminus \Detilde \into \Mtilde_3 \setminus \Detilde_1$. For the sake of notation, we denote the normal bundle by $N_{\cH|\cM}$.

\begin{proposition}
	The fundamental class of  $\Hbar_3$  in $\ch(\Mbar_3)$ is equal to $9\lambda_1-\delta_0-3\delta_1$.
\end{proposition}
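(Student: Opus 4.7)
The plan is to realise $\Hbar_3$ as the degeneracy locus of the natural multiplication map
$$\mu\colon S^{2}\HH\longrightarrow \pi_{*}\omega_{\pi}^{\otimes 2}$$
on $\Mbar_3$, where $\HH$ is the Hodge bundle and $\pi\colon\Cbar_3\to\Mbar_3$ is the universal curve. Both source and target are vector bundles of rank $6$ (the target by cohomology-and-base-change, since $H^{1}(\omega_{C}^{\otimes 2})=0$ on any stable curve), and over the smooth locus $\mu$ is generically an isomorphism because a non-hyperelliptic canonical curve is projectively normal, i.e.\ the map $S^{2}H^{0}(K_C)\twoheadrightarrow H^{0}(2K_C)$ is surjective between vector spaces of dimension $6$.

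First I would compute the two Chern classes in play. One has $c_{1}(S^{2}\HH)=(g+1)\lambda_{1}=4\lambda_{1}$ from the standard formula for symmetric powers of a rank-$3$ bundle, while Mumford's formula (obtained by Grothendieck--Riemann--Roch applied to $\pi$) gives
$$c_{1}(\pi_{*}\omega_{\pi}^{\otimes 2})=(6n^{2}-6n+1)\lambda_{1}-\tbinom{n}{2}\delta\Big|_{n=2}=13\lambda_{1}-\delta_{0}-\delta_{1}.$$
Consequently, viewing $\det\mu$ as a section of $\det(\pi_{*}\omega_{\pi}^{\otimes 2})\otimes\det(S^{2}\HH)^{\vee}$, its zero divisor satisfies $[\mathrm{div}(\det\mu)]=9\lambda_{1}-\delta_{0}-\delta_{1}$.

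The next step, which is the technical heart of the argument, is the fibrewise analysis of $\mu$ along the three candidate divisors. For a smooth hyperelliptic $C$ with hyperelliptic map $\phi\colon C\to\PP^{1}$, one has $\omega_{C}=\phi^{*}\cO(g-1)=\phi^{*}\cO(2)$ and $\phi_{*}\omega_{C}^{\otimes 2}=\cO(4)\oplus\cO(0)$, so $\mu$ factors as the classical surjection $S^{2}H^{0}(\cO(2))\twoheadrightarrow H^{0}(\cO(4))$ followed by inclusion into $H^{0}(\cO(4)\oplus\cO(0))$; hence $\ker\mu$ has dimension exactly $1$, and a versal-deformation check shows that $\det\mu$ vanishes transversely, so the multiplicity along $\Hbar_3$ is $1$. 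For a generic curve $C=C_{1}\cup_{p}C_{2}$ in $\delta_{1}$, the residue-theorem argument yields $H^{0}(\omega_{C})=H^{0}(\omega_{C_{1}})\oplus H^{0}(\omega_{C_{2}})$, and every mixed product $e\cdot f_{i}$ vanishes pointwise on each component, so $\ker\mu$ has dimension exactly $2$; a similar transversality computation gives multiplicity $2$ along $\delta_{1}$. Finally, a generic curve in $\delta_{0}$ has two non-hyperelliptic preimage points on its genus-$2$ normalisation, so it is not a limit of hyperelliptic curves, its canonical model is a plane quartic, $\mu$ is an isomorphism and $\delta_{0}$ contributes multiplicity $0$.

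Combining everything,
$$9\lambda_{1}-\delta_{0}-\delta_{1}=[\mathrm{div}(\det\mu)]=[\Hbar_3]+2\delta_{1},$$
which gives the stated formula $[\Hbar_3]=9\lambda_{1}-\delta_{0}-3\delta_{1}$. The main obstacle is the transversality assertion (i.e.\ that the vanishing orders of $\det\mu$ really equal the pointwise kernel dimensions computed above rather than being strictly larger); this requires exhibiting, in a local versal family, a first-order deformation of the hyperelliptic condition (respectively of the separating-node condition) on which the perturbed $\mu$ acquires full rank, which is a routine but careful computation. Alternatively, one may bypass this multiplicity analysis by comparing intersection numbers of $[\Hbar_3]$ against three independent test curves (e.g.\ a Lefschetz pencil of plane quartics, a pencil in $\delta_{1}$, and a family of smooth hyperelliptic curves) and solving the resulting $3\times 3$ linear system, which is how the formula is classically obtained by Harris--Mumford and Cornalba--Harris.
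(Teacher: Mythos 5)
Your proposal is correct in substance, but it is worth pointing out that the paper itself does not prove this proposition: its ``proof'' is a citation of Theorem 1 of Esteves \cite{Est}, plus the remark that Esteves' computation only needs $2$ inverted in the Picard group. What you have written is essentially a reconstruction of that cited proof. Esteves obtains the class exactly as you do: he realises the hyperelliptic locus inside the degeneracy locus of the multiplication map $S^{2}\HH\to\pi_{*}\omega_{\pi}^{\otimes 2}$ between rank-$6$ bundles, computes $c_{1}$ of source and target ($4\lambda_{1}$ and $13\lambda_{1}-\delta_{0}-\delta_{1}$, via Mumford's formula, as in your text), and then pins down the multiplicities of $\det\mu$ along the three divisors. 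Your fibrewise corank computations are all correct: corank $1$ at a smooth hyperelliptic curve (the kernel is spanned by the equation of the conic), corank $2$ at a generic point of $\delta_{1}$ (the kernel is $H^{0}(\omega_{C_{1}})\otimes H^{0}(\omega_{C_{2}})$, since $S^{2}H^{0}(\omega_{C_{i}})\to H^{0}(\omega_{C_{i}}^{\otimes 2})$ is injective on each piece), and corank $0$ at a generic irreducible one-nodal curve, whose canonical model is a nodal plane quartic; the final arithmetic $9\lambda_{1}-\delta_{0}-\delta_{1}-2\delta_{1}=9\lambda_{1}-\delta_{0}-3\delta_{1}$ matches the statement. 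The one step you defer --- that the vanishing order of $\det\mu$ along $\Hbar_3$ and $\delta_{1}$ equals the generic corank rather than exceeding it --- is precisely the technical heart of Esteves' paper, so as written your proposal is a correct outline of the right proof rather than a complete argument; the test-curve alternative you mention (Harris--Mumford, Cornalba--Harris) is also a legitimate classical route to the same formula, and either completion would be acceptable.
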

\begin{proof}
	This is Theorem 1 of \cite{Est}. It is important to notice that in the computations the author just need to invert $2$ in the Picard group to get the result.
\end{proof}

\begin{remark}
	As in the ($A_1$-)stable case, we define by $\Detilde_0$ the closure of the substack of $\Mtilde_3$ which parametrizes curves with a non-separating node. Alternately, we can consider the stack $\Detilde$ in the universal curve $\Ctilde_3$ of $\Mtilde_3$, defined as the vanishing locus of the first Fitting ideal of $\Omega_{\Ctilde_3|\Mtilde_3}$.
	
	We denote by $\Detilde$ the image with its natural stacky structure and by $\Detilde_0$ the complement of the inclusion $\Detilde_1 \subset \Detilde$. Thanks to Lemma 4.7 of \cite{Per1}, we know that $\Detilde_1\into \Detilde$ is also open, therefore we get that $\Detilde_0$ is a closed substack of $\Mtilde_3$. We denote by $\delta_0$ its fundamental class in the Chow ring of $\Mtilde_3$.
\end{remark}

Because $\Mtilde_3\setminus\Mbar_3$ has codimension $2$, we get that the same formula works in our context. Because $\delta_1$ is defined as the fundamental class of $\Detilde_1$, we just need to compute $\delta_0$ restricted to $\Htilde_3\setminus \Detilde_1$ to get the description we want. To do so, we compute the restriction of $\delta_0$ to $\Htilde_3 \setminus (\Detilde_1 \cup \Xi_1)$ and to $\Xi_1\setminus \Detilde_1$ and then glue the informations together.

First of all, notice that $\Htilde_3 \setminus (\Detilde_1 \cup \Xi_1)$ is an open inside a $9$-dimensional representation $V$ of $\PGL_2\times \gm$, as we have $2:1$-covers of $\PP^1$. Unwinding the definitions, we get the following result.

\begin{lemma}\label{lem:ar-vis}
	The representation $V$ of $\PGL_2 \times \gm$ above coincides with the one given by Arsie and Vistoli in Corollary 4.6 of \cite{ArVis}.  
\end{lemma}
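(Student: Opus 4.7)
The plan is to unwind the construction of $V$ and match it directly with the representation appearing in Corollary~4.6 of \cite{ArVis}, exploiting the fact that both parametrize smooth hyperelliptic curves of genus $3$ via the same cyclic-cover construction. First, I would restrict to the open locus in $\Htilde_3 \setminus \Detilde_1$ where the genus-$0$ quotient $Z$ is integral, which is precisely the complement $\Htilde_3 \setminus (\Detilde_1 \cup \Xi_1)$: removing $\Detilde_1$ discards separating nodes and removing $\Xi_1$ discards reducible quotients. On this open, $Z \simeq \PP^1_S$, so $\cM_0^0 \simeq \cB\PGL_2$ and the base stack $\cP_3'|_{\cM_0^0 \times \cB\gm}$ is identified with $\cB(\PGL_2 \times \gm)$; the $\gm$ factor records the trivialization $\cL = \omega_{\cZ}^{\otimes 2} \otimes \pi^*\cS$ of the gerbe $\cP_3' \to \cM_0^{\leq 1}$.

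Second, I would identify the bundle $V = \pi_*\cL^{\otimes -2}$ explicitly as a $(\PGL_2 \times \gm)$-representation. Writing $\cL^{\otimes -2} = \omega_{\cZ}^{\otimes -4} \otimes \pi^*\cS^{\otimes -2}$ and applying the projection formula yields
$$
V \simeq \pi_*\omega_{\cZ}^{\otimes -4} \otimes \cS^{\otimes -2},
$$
whose underlying $9$-dimensional vector space is canonically $\H^0(\PP^1, \cO(8))$, namely the space of binary forms of degree $8$, with the natural $\PGL_2$-action by projective change of coordinates and $\gm$ acting through the character $\cS^{\otimes -2}$.

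Finally, I would compare with \cite{ArVis}: Corollary~4.6 realizes the stack of smooth hyperelliptic curves of genus $g$ as the quotient of an open inside the space of binary forms of degree $2g+2 = 8$ by the same group $\PGL_2 \times \gm$, since such a curve is determined up to isomorphism by its branch locus on $\PP^1$ together with the choice of the line bundle defining the cover. The matching of the $\PGL_2$-actions is automatic from the identification of both $\PP^1$'s with the geometric quotient. The only real content is therefore matching the $\gm$-weights. I expect this to be the main obstacle: bookkeeping the $\gm$-character requires tracing carefully through the normalization of $\cL$ (equivalently, through the trivialization of the $\gm$-gerbe using $\omega^{\otimes 2}$), since different conventions for the parametrizing line bundle of the double cover can shift the weight of $\gm$ on $V$.
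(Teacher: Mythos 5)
Your proposal is correct and is exactly the unwinding of definitions that the paper itself offers in place of a proof (the lemma carries no written argument beyond the phrase preceding it): restricting to integral $Z$, identifying the base with $\cB(\PGL_2\times\gm)$ via the trivialization of the gerbe by $\omega_{\cZ}^{\otimes 2}$, and reading off $V=\pi_*\omega_{\cZ}^{\otimes -4}\otimes\cS^{\otimes -2}$ is precisely the intended argument. The $\gm$-weight matching that you flag as the main remaining obstacle closes immediately: under the isomorphism $\GL_2/\mu_4\simeq \PGL_2\times\gm$ given by $A\mapsto([A],\det(A)^2)$, the $\gm$-factor on both sides is identified with the fiberwise scaling of $\cL$ (the automorphisms of the pair $(Z,\cL)$ fixing $Z$), so on both sides it acts on $V=\pi_*\cL^{\otimes -2}$ with weight $-2$, while the $\PGL_2$-structures agree because both are canonically $\pi_*\omega_{\cZ}^{\otimes -4}$.
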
 
 
This implies that we can see it as an open inside $[\AA(8)/(\GL_2/\mu_4)]$, where $\AA(8)$ is the vector space of binary forms of degree $8$ and $\GL_2/\mu_4$ acts by the equation $A.f(x)=f(A^{-1}x)$. By the theory developed in \cite{ArVis} (see \Cref{prop:descr-hyper} in our situation), it is clear that the sections $f \in \AA(8)$  describe the branching locus of the quotient morphism. In particular, worse-than-nodal singularities on the $2:1$-cover of the projective line correspond to points on $\PP^1$ where the branching divisor is not \'etale, or equivalently points where $f$ has multiplicity more than $1$. Therefore $\delta_0$ is represented by the closed invariant subscheme of singular forms inside $\AA(8)$. This was already computed by Di Lorenzo (see the first relation in Theorem 6 in \cite{DiLor}), and we have that $ \delta_0=28\tau$ with $$\tau=c_1(\pi_*(\omega_C(-W)^{\otimes 2}))$$
where $W$ is the ramification divisor in $C$. Notice that if $f:C\rightarrow \PP^1$ is the cyclic cover of the projective line, we have that $W\simeq f^{*}\cL^{\otimes \vee}$. A computation using Grothendieck duality gives us that $\tau=-s$.

 We have that $\delta_0:=as+bc_1$ in $\ch(\Htilde_3\setminus \Detilde_1)$ for some elements in $\ZZ[1/6]$. 
 
 The computations above implies that if we restrict to the open complement of $\Xi_1 \setminus \Detilde_1$, we get $a=-28$.

The restriction to $\Xi_1\setminus \Detilde_1$ is a bit more complicated, because we have that $\Xi_1 \subset \Detilde_0$. Recall the description
$$ \Xi_1\setminus \Detilde_1 = [\AA(4,4)\setminus D/H]$$
where $\AA(4,4)$ is the vector space of pairs of binary forms $(f(x,z),g(y,z))$ of degree $4$ such that $f(0,1)=g(0,1)$. We define an open $\Xi_1^0$ of $\Xi_1\setminus \Detilde_1$ which are the pairs $(f,g)$ such that $f(0,1)=g(0,1)\neq 0$. Clearly, $D$ does not intersect $\Xi_1^0$.

We do the computations on $\Xi_1^0$ and verify they are enough to determine the coefficient $b$ in the description $\delta_0=-28s+bc_1$. 

\begin{remark}
	The class $[\Xi_1\setminus \Xi_1^0]$ in $\ch(\Xi_1)$ is equal to $-2s$. In fact, it can be described as the vanishing locus of the coefficient of $z^4$ for the pair $(f,g) \in \AA(4,4)$. Therefore the Picard group (up to invert $2$) of $\Xi_1^0$ is freely generated by $c_1$.
\end{remark}

Let us define a closed substack inside $\Xi_1^0$: we define $\Delta'$ as the locus parametrizing pairs $(f,g)$ such that either $f$ or $g$ are singular forms.

\begin{lemma}
	In the setting above, we have the equality 
	$$ \Delta'=12c_1$$ 
	in the Picard group of $\Xi_1^0$.
\end{lemma}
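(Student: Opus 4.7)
The plan is to decompose $\Delta'$ as a sum of two divisors, compute each as the vanishing locus of a discriminant viewed as an equivariant section of a line bundle on $\AA(4,4)\setminus D$, and then simplify using the relations already established in $\pic(\Xi_1^0)$.

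First I would write $\Delta' = \Delta'_f + \Delta'_g$, where $\Delta'_f$ (resp.\ $\Delta'_g$) is the effective Cartier divisor on $\AA(4,4)\setminus D$ cut out by the discriminant $\operatorname{disc}(f)$ (resp.\ $\operatorname{disc}(g)$). These two divisors intersect in positive codimension, and the $C_2$ factor in $H$ swaps them, so their union descends to a $C_2$-invariant Cartier divisor on $\Xi_1^0$ whose class is what we wish to compute.

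Next I would work $\gm^2\times\gm$-equivariantly, recovering the $C_2$-action at the end via \Cref{lem:chow-tor}. Recall that the coefficient of $x^iz^{4-i}$ in $f$ has character $t_1^is^{-2}$. The discriminant of a binary quartic is a polynomial in the coefficients $a_0,\dots,a_4$ that is isobaric of weight $n(n-1)=12$ in the grading assigning weight $i$ to $a_i$, and of total degree $2n-2=6$; hence $\operatorname{disc}(f)$ is an eigensection of character $t_1^{12}s^{-12}$. Therefore $[\Delta'_f]=12t_1-12s$ and, by symmetry, $[\Delta'_g]=12t_2-12s$, so that
$$
[\Delta'] = 12(t_1+t_2) - 24s,
$$
an expression which is manifestly $C_2$-invariant.

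Finally I would invoke the two relations already available in $\pic(\Xi_1^0)$: by \Cref{lem:normal-xi}, $c_1=t_1+t_2$; and by the preceding remark, $-2s=[\Xi_1\setminus\Xi_1^0]$ vanishes in $\pic(\Xi_1^0)$, so after inverting $2$ we obtain $s=0$. Substituting, $[\Delta']=12c_1$, as claimed. The only non-bookkeeping step is the character calculation for $\operatorname{disc}(f)$, which I would justify from the formula $\operatorname{disc}(f)=a_4^{2n-2}\prod_{i<j}(\alpha_i-\alpha_j)^2$ together with the fact that under the $t_1$-action the roots scale with weight $-1$ and the leading coefficient $a_4$ has weight $n=4$, producing the total $t_1$-weight $4\cdot 6 - 2\binom{4}{2}=12$ and the $s$-weight $-2\cdot 6=-12$.
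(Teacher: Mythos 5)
Your proof is correct and takes essentially the same route as the paper: the paper likewise decomposes $\Delta'$ into the two loci where $f$ (respectively $g$) is singular, uses \Cref{lem:chow-tor} to pass to the $\gm^2$-equivariant setting, and reduces to computing the class of the locus of singular forms inside $\AA(4)$, which is exactly what your discriminant-character computation carries out. Your write-up merely makes explicit the ``straightforward computation'' the paper leaves to the reader, namely the weight count giving $12t_1-12s$ and $12t_2-12s$ for the two discriminant divisors and the vanishing of $s$ in $\pic(\Xi_1^0)$.
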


\begin{proof}
	As a conseguence of \Cref{lem:chow-tor}, we can do the $\gm^2$-equivariant computations of the equivariant class of $\Delta'$. We have that $\Delta'=\Delta_1' \cup \Delta_2'$ where $\Delta_1'$ (respectively $\Delta'_2$) is the substack parametrizing pairs $(f,g)$ such that $f$ (respectively $g$) is a singular form. 

We reduce ourself to compute the class the locus of singular forms inside $\AA(4)$. The result then follows from a straightforward computation. 
\end{proof}
Now we are ready to compute the restriction of $\delta_0$.
\begin{lemma}
	In the situation above, we have 
	$$ \delta_0\vert_{\Xi_1^0}= -2c_1 + [\Delta'] $$
	inside the Chow ring of $\Xi_1^0$.
\end{lemma}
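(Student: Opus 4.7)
The plan is to decompose $\delta_0|_{\Xi_1^0}$ according to the two sources of non-separating nodes appearing on the universal curve over $\Xi_1^0$. The defining condition $f(0,1)=g(0,1)\neq 0$ of $\Xi_1^0$ ensures that the double cover $C\to Z$ is \'etale at the node $\infty$ of $Z$, so every parametrized curve has two persistent non-separating nodes arising as the two preimages of $\infty$ (call these the type I nodes). Any further non-separating nodes occur only over smooth points of $Z$ where the branching section acquires a double zero, i.e., at singular points of $f$ or $g$; the locus of such curves is precisely $\Delta'$.

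I would first restrict to the open complement $\Xi_1^0\setminus\Delta'$, on which only the two type I nodes appear. For a family with persistent non-separating nodes, the Cartier divisor $\Detilde_0$ locally factors as the product of the smoothing parameters of those nodes, and each smoothing parameter is naturally a section of the tensor product of the two branch tangent lines at the node. This yields the standard formula $-(\psi_{+,1}+\psi_{-,1})-(\psi_{+,2}+\psi_{-,2})$ for the restriction of $\delta_0$, where $\psi_{\pm,i}$ are the cotangent classes at the two branches of the $i$-th type I node. Since the double cover is \'etale at $\infty$, the cotangent lines at each branch of each type I node are canonically pulled back from the cotangent lines at the corresponding branches of $\infty$ in $Z$. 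Under the presentation $\cM_0^1\simeq\cB(\gm^2\ltimes C_2)$, a direct equivariant analysis of the $\gm^2$-action on the conic $xy=0$ at its node shows the two cotangent weights are $t_1$ and $t_2$, so $\psi_++\psi_-=t_1+t_2=c_1|_{\Xi_1^0}$. Hence $\delta_0|_{\Xi_1^0\setminus\Delta'}=-2c_1$.

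To extend to all of $\Xi_1^0$, I would use the excision sequence: since $\Delta'$ is a Cartier divisor in the smooth stack $\Xi_1^0$, the difference $\delta_0|_{\Xi_1^0}-(-2c_1)$ is supported on $\Delta'$ and hence equals $k[\Delta']$ for some integer $k$. To pin down $k=1$, I would test along a smooth transversal arc through a generic point of $\Delta'$: across such an arc a new $A_1$-node appears on the universal curve exactly at the central fiber, contributing one new factor of multiplicity one to the local equation of $\Detilde_0$. This yields $\delta_0|_{\Xi_1^0}=-2c_1+[\Delta']$ as claimed.

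The main difficulty lies in the middle paragraph: justifying that the classical node-restriction formula $\delta_0=-\psi_+-\psi_-$ extends from the Deligne--Mumford stable setting to the broader $A_r$-stable moduli stack, and carefully matching the cotangent lines at the type I nodes of $C$ with the equivariant characters $t_1,t_2$ on $\cM_0^1$ via the \'etale cover $C\to Z$ at $\infty$. The transversality argument in the final step is routine but should be verified by an explicit local computation near a generic point of $\Delta'$.
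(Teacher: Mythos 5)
Your argument is correct, but it takes a genuinely different route from the paper's. The paper never touches the local factorization of $\cO(\Detilde_0)$ into smoothing parameters: it works on the universal curve, realizing $\delta_0$ as the pushforward of the fundamental class of the singular locus $\Delta_{\cH}\subset\cC_{\cH}$ (cut out by the first Fitting ideal of the relative differentials), so that $\delta_0\vert_{\Xi}=j^*p_*(1)$, and then evaluates this by excess intersection in the cartesian square over $j\colon\Xi\into\cH$. The fiber product splits as $D'\sqcup D''$: the piece $D''$, where the node lies on the \'etale divisor $E$ (your two ``type I'' nodes), is open and closed in $\Delta_{\cH}$, hence carries a rank-one excess bundle $q''^*N_j$ and maps to $\Xi$ with degree $2$, contributing $q''_*c_1(q''^*N_j)=2c_1(N_j)=-2c_1$; the piece $D'$ maps birationally onto $\Delta'$ and contributes $q'_*(1)=[\Delta']$. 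Your decomposition is instead the classical Mumford-style boundary calculus: each persistent node contributes $-(\psi_++\psi_-)$, and the jumping locus contributes $[\Delta']$ with a multiplicity pinned down by transversality. These are two packagings of the same geometry---inside $\cH$ the normal bundle $N_j$ is the smoothing line bundle $T_+\otimes T_-$ of the node of $Z$, and through the \'etale cover each of the two nodes of $C$ has smoothing bundle pulled back from it, which is exactly why the paper's $2c_1(N_j)$ equals your $-2(\psi_++\psi_-)$. The trade-off: the paper gets all multiplicity-one statements for free from birationality of $q'$ and needs no local analysis of the discriminant, while your route is more elementary and makes the appearance of the $\psi$-classes transparent.

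Two remarks on the points you flag as delicate. First, the extension of the node formula beyond the Deligne--Mumford stable setting, which you call the main difficulty, is a non-issue: a divisor class is insensitive to codimension $\geq 2$, and the locus of curves with $A_r$-singularities for $r\geq 2$ meets $\Xi_1^0$ in codimension $\geq 2$ (it sits inside the deeper strata of $\Delta'$, where $f$ or $g$ has a zero of order $\geq 3$), so after discarding it you are working in the open stable locus $\Mbar_3\subset\Mtilde_3$, where the classical formula holds verbatim; this is precisely the paper's opening reduction. Second, your sign claim $\psi_++\psi_-=t_1+t_2$ is forced by \Cref{lem:normal-xi}: self-intersection gives $c_1(N_{\cM_0^1/\cM_0^{\leq 1}})=[\cM_0^1]\vert_{\cM_0^1}=-(t_1+t_2)$, and this normal bundle is $T_+\otimes T_-$ at the node of $Z$, so the cotangent weights must sum to $t_1+t_2$; this is a useful cross-check that your equivariant computation agrees with the paper's conventions.
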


\begin{proof}
	Because we are computing the Chern class of a line bundle, we can work up to codimension two, and in particular we can remove the locus parametrizing curves with $A_r$-singularities for $r\geq 2$  (this locus intersects $\Xi_1^0$ transversely). Therefore, it is enough to prove the statement in the stable case. For the sake of notation, let $\cM$ be $\Mbar_3\setminus \Debar_1$, $\cH$ be $\Hbar_3\setminus \Debar_1$ and $\cC$ (respectively $\cC_{\cH}$) be the universal curve over $\cM$ (respectively $\cH$). We denote by $\Xi$ the open substack of $\Xi_0^1$ classifying $A_1$-stable curves.
	
	Consider the closed substack $\Delta$ in $\cC$ of singular points of the morphism $\pi:\cC \rightarrow \cM$, which can be defined as the vanishing of the first Fitting ideal of $\Omega_{\cC/\cM}$. In the same way, one can define $\Delta_{\cH}$ inside $\cC_{\cH}$. Notice that the intersection of $\Detilde_0$ with $\cM$ is exactly the image of $\Delta$ through the morphism $\pi$, and $\pi\vert_{\Delta}: \Delta \rightarrow \cM$ is birational onto its image. 
	In the following diagram
	$$
	\begin{tikzcd}
                   & \Delta_ {\cH} \arrow[d, "p"] \arrow[r, "\iota", hook] & \Delta \arrow[d] \\
    \Xi \arrow[r, "j"] & \cH \arrow[r, "i", hook]                                                    & \cM             
    \end{tikzcd}
    $$
    we have that the square is cartesian and the two closed immersion are regular of codimension $1$. This implies that 
    $$ \delta_0 \vert_{\Xi} = j^*p_*(1).$$
    
    We need to describe the fiber product $\Delta_{\cH}\times_{\cH} \Xi$. The moduli stack $\Xi$ parametrizes triples $(C,\sigma,E)$ where $(C,\sigma)$ is a stable hyperelliptic curve of genus $3$ and $E$ is a finite \'etale divisor of degree $2$ of $C$ (supported in the singular locus) such that the partial normalization of $C$ in $E$ are two disjoint curves of genus $1$ and $\sigma$ acts non-trivially on $E$. Therefore the fiber product $\Delta_{\cH}\times_{\cH} \Xi$ parametrizes $(C,\sigma,E,p)$ where $(C,\sigma,E)$ is an object of $\Xi$ and $p$ is a node of $C$. We denote by $D'$ the (open) substack of the fiber product where $p$ is not contained in the support of $E$. It is easy to see that this is also a closed substack. If we denote by $D''$ the complement of $D'$ inside the fiber product, we have the following cartesian diagram:
    $$
    \begin{tikzcd}
    D' \sqcup D'' \arrow[d, "q'\sqcup q''"'] \arrow[r, "j'\sqcup j''"] & \Delta_ {\cH} \arrow[d, "p"] \\
    \Xi \arrow[r, "j"]                                                 &    \cH                         
    \end{tikzcd}$$
    where $j':D' \hookrightarrow \Delta_{\cH}$ is a regular embedding of codimension $1$ and $j'':D'' \hookrightarrow \Delta_{\cH}$ is a regular embedding of codimension $0$, i.e. an open and closed embedding. Therefore $$\delta_0\vert_{\Xi} = q''_*c_1(q''^*N_j)+q'_*(1)$$ where $N_j$ is the normal bundle associated to the closed embedding $j:\Xi \hookrightarrow \cH$. Notice that $q''$ is finite of generic degree $2$, therefore \Cref{lem:normal-xi} implies that 
    $$ q''_*c_1(q''^*N_j)=2c_1(N_j)=-2c_1$$ 
    as an element of the Picard group of $\Xi$ (which is equal to the one of $\Xi_0^1$). The statement follows from noticing that the image of $q'$ is exactly $\Delta'$ and that $q'$ is birational onto its image.

\end{proof}

The previous lemma implies that $\delta_0 \vert_{\Xi_1^0}=10c_1$ which implies that $b=10$, i.e. $\delta_0=-28s+10c_1$ in the Picard group of $\Htilde_3\setminus \Detilde_1$. \Cref{lem:lambda-class-H} implies the following alternative description.

\begin{corollary}\label{cor:norm-hyper}
	The first Chern class of $N_{\cH|\cM}$ is equal to $(2\xi_1-\lambda_1)/3$.
\end{corollary}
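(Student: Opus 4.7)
The plan is to invoke the self-intersection formula $c_1(N_{\cH|\cM}) = i^{*}[\Htilde_3 \setminus \Detilde_1]$, where $i : \Htilde_3 \setminus \Detilde_1 \hookrightarrow \Mtilde_3 \setminus \Detilde_1$ is the closed embedding, and then compute the right-hand side using the formula for the hyperelliptic class recalled just above (Estrada-Sarlabous/Esteves).

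Specifically, I would first note that the identity $[\Hbar_3] = 9\lambda_1 - \delta_0 - 3\delta_1$ extends to our setting, because $\Mtilde_3\setminus\Mbar_3$ has codimension at least $2$ and both sides are codimension-one classes. Restricting to the open substack $\Mtilde_3\setminus\Detilde_1$ kills the $\delta_1$ contribution, so on this open we have
\begin{equation*}
[\Htilde_3\setminus\Detilde_1] = 9\lambda_1 - \delta_0.
\end{equation*}
Pulling back along $i$ and substituting the formula $\delta_0 = -28 s + 10 c_1$ just obtained in the Picard group of $\Htilde_3\setminus\Detilde_1$ gives
\begin{equation*}
c_1(N_{\cH|\cM}) = 9\lambda_1 + 28 s - 10 c_1.
\end{equation*}

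Finally, I would apply \Cref{lem:lambda-class-H}, which expresses $s = -(\xi_1+\lambda_1)/3$ and $c_1 = -\xi_1$, and a direct substitution yields
\begin{equation*}
9\lambda_1 + 28 \cdot \frac{-(\xi_1+\lambda_1)}{3} - 10(-\xi_1) = \frac{27\lambda_1 - 28\lambda_1 - 28\xi_1 + 30\xi_1}{3} = \frac{2\xi_1 - \lambda_1}{3},
\end{equation*}
which is the desired formula. There is no real obstacle here: all the substantial work (the computation of $\delta_0$ via the hyperelliptic locus splitting into $\Xi_1^0$ and its complement, and the conversion between generators $c_1,c_2,s$ and $\lambda_1,\lambda_2,\xi_1$) has already been done, so this corollary is just the arithmetic consequence of assembling those inputs through the self-intersection formula.
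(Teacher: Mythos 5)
Your proposal is correct and follows essentially the same route as the paper: the paper likewise extends Esteves' class $9\lambda_1-\delta_0-3\delta_1$ to $\Mtilde_3$ using the codimension-$2$ complement, drops $\delta_1$ on the open $\Mtilde_3\setminus\Detilde_1$, restricts via the (implicit) self-intersection formula, inserts $\delta_0=-28s+10c_1$, and converts to $\lambda_1,\xi_1$ with \Cref{lem:lambda-class-H}. The only difference is that you spell out the self-intersection step and the final arithmetic, which the paper leaves implicit.
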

 
\section{Description of $\Mtilde_3 \setminus (\Detilde_1\cup \Htilde_3)$}\label{sec:3}

We focus now on the open stratum. Recall that the canonical bundle of a smooth curve of genus $g$ is either very ample or the curve is hyperelliptic and the quotient morphism factors through the canonical morphism. This cannot be true for $A_r$-stable curves as in $\Detilde_1$ we have that the dualizing line bundle is not globally generated, see Proposition 3.36 of \cite{Per2}. Nevertheless, if we remove $\Detilde_1$, we have the same result for genus $3$ curves. 

\begin{lemma}
	Suppose $C$ is an $A_r$-stable curve of genus $3$ over an algebraically closed field which does not have separating nodes and it is not hyperelliptic. Then the canonical morphism (induced by the complete linear system of the dualizing sheaf) is a closed immersion.
\end{lemma}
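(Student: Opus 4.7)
The plan is to mimic the classical Max Noether argument, executed on a Gorenstein curve. First I would exhibit $\omega_C$ as globally generated, deduce that the complete canonical linear system defines a finite morphism $\phi\colon C\to\PP^2$, rule out by degree considerations every possibility except that $\phi$ is birational onto a plane quartic, and finally upgrade birationality to an isomorphism via an arithmetic-genus comparison.

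In detail, by Proposition 3.36 of \cite{Per2} the hypothesis of no separating nodes gives that $\omega_C$ is globally generated, and Riemann--Roch together with Serre duality on the Gorenstein curve $C$ gives $h^0(\omega_C)=g=3$. Hence one has a well-defined morphism
\[
\phi\colon C\longrightarrow \PP^2=\PP\bigl(H^0(C,\omega_C)^{\vee}\bigr),\qquad \phi^{*}\cO(1)\simeq \omega_C.
\]
Ampleness of $\omega_C$ (built into $A_r$-stability) forces $\phi$ to be finite, and the fact that the three canonical sections are linearly independent forces the image $X:=\phi(C)$ to be projectively non-degenerate, so $\deg X\geq 2$. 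The factorization $4=\deg\omega_C=(\deg\phi)\cdot(\deg X)$ leaves only two possibilities: either $\phi$ has degree $2$ onto a (smooth or nodal) conic $X$, or $\phi$ has degree $1$ onto a plane quartic.

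In the degree-$2$ case, the target $X$ is a reduced plane conic, hence a connected nodal curve of arithmetic genus $0$. Since $\cha\kappa\neq 2$, the generically \'etale degree-$2$ morphism $\phi$ gives a generic involution of $C$ over $X$, which extends uniquely to a regular involution $\sigma\colon C\to C$ (using that $C$ is $S_2$ and that $\sigma$ preserves each local branch). The geometric quotient $C/\sigma$ recovers $X$, and the fixed locus of $\sigma$ is finite since $\phi$ is generically \'etale; thus $(C,\sigma)$ is a hyperelliptic $A_r$-stable curve in the sense of the paper, contradicting the non-hyperelliptic assumption.

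Therefore $\phi$ is birational onto a plane quartic $X$. A reduced plane quartic is Cohen--Macaulay with $p_a(X)=\binom{4-1}{2}=3=p_a(C)$, and the canonical inclusion $\cO_X\hookrightarrow \phi_{*}\cO_C$ has cokernel $Q$ a torsion sheaf supported on the finite locus where $\phi$ fails to be an isomorphism. Comparing Euler characteristics in $0\to\cO_X\to\phi_{*}\cO_C\to Q\to 0$ yields $\mathrm{length}(Q)=p_a(X)-p_a(C)=0$, so $\phi_{*}\cO_C=\cO_X$ and $\phi$ is a closed immersion. The one delicate point is the construction of $\sigma$ in the degree-$2$ case when $X$ is reducible: then $C$ is forced to be reducible too, and one needs to check that the involution swapping sheets extends across the preimage of the node of $X$ using the local analysis of double covers of a node carried out in \cite{Per2}.
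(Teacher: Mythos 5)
Your route is genuinely different from the paper's: the paper never analyzes the degree of the canonical map directly, but instead matches its definition of hyperelliptic with Catanese's intrinsic one (existence of smooth points $x,y$ with $\dim\H^0(C,\cO(x+y))=2$, via Proposition 3.14 of \cite{Cat} and Lemma 3.6 of \cite{Per2}), treats the three configurations that are not very strongly connected by hand, and then quotes Theorem G of \cite{Cat}. For \emph{irreducible} $C$ your argument is complete and more self-contained than the paper's: the degree factorization is valid, the image of a degree-$2$ canonical map is a smooth conic, miracle flatness (finite morphism from a CM curve to a regular curve) makes $\phi$ flat, so in characteristic $\neq 2$ the trace splitting produces the involution, and the Euler-characteristic count settles the birational case.

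The genuine gap is the reducible case, which is exactly the case this lemma is needed for. The identity $4=\deg\omega_C=(\deg\phi)\cdot(\deg X)$ presupposes that $\phi$ has a well-defined global degree; when $C$ is reducible, the canonical map can a priori restrict to different components with different degrees, and two components can share an image while a third does not. For instance, nothing in your two-case division excludes a curve $C=C_1\cup C_2$ whose canonical map is $2:1$ from $C_1$ onto a line and birational from $C_2$ onto a conic (image a cubic), or a three-component curve with two components mapping onto the same line: these fall under neither ``degree $2$ onto a conic'' nor ``birational onto a quartic.'' Ruling such configurations out is where the real work lies; it requires analyzing the restriction maps $\H^0(C,\omega_C)\to \H^0(C_i,\omega_C\vert_{C_i})$, whose kernels are $\H^0$ of the dualizing sheaves of the complementary subcurves, together with $2$-connectedness --- precisely the content the paper imports from \cite{Cat} and supplements with its explicit treatment of the not-very-strongly-connected cases (two genus-$1$ curves meeting in a length-$2$ divisor, two rational curves meeting at an $A_7$, a rational and a genus-$1$ curve meeting at an $A_5$). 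A secondary weakness: your extension of the sheet-swapping involution across the singular fibers is asserted via ``$C$ is $S_2$,'' but $S_2$ gives nothing for curves (the points being removed have codimension $1$), and extending a rational involution across points of a non-normal curve is exactly the kind of statement that can fail --- the honest argument is flatness of $\phi$ (automatic only when $X$ is a smooth conic) plus, over the node of a reducible conic, the local analysis of Proposition 3.2 of \cite{Per2} that you defer to without carrying out.
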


\begin{proof}
	This proof is done using the theory developed in \cite{Cat} to deal with most of the cases and analizying the rest of the them separately. 
	
	Firstly, we prove that if $C$ is a $2$-connected $A_r$-stable curve of genus $3$ (i.e. it is in $\Mtilde_3\setminus \Detilde_1$) then we have that $C$ is hyperelliptic if and only if there exist two smooth points $x,y$ such that $\dim \H^0(C,\cO(x+y))=2$ (notice that this is the definition of hyperelliptic as in \cite{Cat}).
	
	One implication is clear. Suppose there exists two smooth points $x,y$ in C such that $\dim \H^0(C,\cO(x+y))=2$. Proposition 3.14 of \cite{Cat} assures us that we are in one of two possible situations:
	\begin{itemize}
		\item[(a)] $x,y$ belongs  to $2$ different irreducible components $Y_1,Y_2$ of genus $0$ such that every connected component $Z$ of $C-Y_1-Y_2$ intersect $Y_1$ in a node and $Y_2$ in a (different) node,
		\item[(b)] $x,y$ belong to an irreducible hyperelliptic curve $Y$ such that for every connected component $Z$ of $C-Y$ intersect $Y$ in a Cartier divisor isomorphic to $\cO(x+y)$.
	\end{itemize} 
	Regarding (a), the stability condition implies that the only possibilities are either that there are no other connected components of $C-Y_1-Y_2$, which implies $C$ is hyperelliptic, or we have only one connected component $Z$ of $C-Y_1-Y_2$ which is of genus $1$. Because $Z$ is of genus $1$ and it intersects $Y_1\cup Y_2$  in two points, we have that there exists a unique hyperelliptic involution of $Z$ that exchanges them, see Lemma 3.6 of \cite{Per2}. Therefore again $C$ is hyperelliptic. In case (b), the stability condition implies that the only possibilities are either that $C$ is irreducible, and therefore hyperelliptic, or $C$ is the union of two genus $1$ curves intersecting in a length $2$ divisor. Again it follows from Lemma 3.6 of \cite{Per2} that $C$ is hyperelliptic.
	
	Now, we focus on an other definition given in \cite{Cat}. The author define $C$ to be strongly connected if there are no pairs of nodes $x,y$ such that $C\setminus \{x,y\}$ is disconnected. Furthermore, the author define $C$ very strongly connected if it is strongly connected and there is not a point $p \in C$ such that $C\setminus \{p\}$ is disconnected.  
	
	In our situation, a curve $C$ is not very strongly connected if 
	\begin{itemize}
		\item[(1)] $C$ is the union of two genus $1$ curves meeting at a divisor of length $2$,
		\item[(2)] $C$ is the union of two genus $0$ curves meeting in a singularity of type $A_7$,
		\item[(3)] $C$ is the union of a genus $0$ and a genus $1$ curve meeting in a singularity of type $A_5$.
	\end{itemize}
	
	Case (1) is always hyperelliptic. An easy computation shows that the case (3) is never hyperelliptic and the canonical morphism is a closed embedding which identifies $C$ with the union of a cubic and a flex tangent in $\PP^2$. Finally, one can show that in case (2) the canonical morphism restricted to the two components is a closed embedding, therefore it is clear that it is either a finite flat morphism of degree $2$ over its image ($C$ hyperelliptic) or it is a closed immersion globally on $C$. 
	
	It remains to prove the statement in the case $C$ is very strongly connected. This is Theorem G of \cite{Cat}.
\end{proof}

\begin{remark}
	Notice that this lemma is really specific to genus $3$ curves and it is false in genus $4$. Consider a genus $2$ smooth curve $C$ meeting a genus $1$ smooth curve $E$ in two points, which are not a $g_1^2$ for $C$. Then the canonical morphism is $2:1$ restricted to $E$ but it is birational on $C$.
\end{remark}

The previous lemma implies that the description of $\cM_3 \setminus \cH_3$ proved by Di Lorenzo in Proposition 3.1.3 of \cite{DiLor2} can be generalized in our setting. Specifically, we have the following isomorphism: 
$$ \Mtilde_3 \setminus (\Detilde_1\cup \Htilde_3) \simeq [U/\GL_3]$$ 
where $U$ is an invariant open subscheme inside the space $\AA(3,4)$ of (homogeneous) forms in three coordinates of degree $4$ which is a representation of $\GL_3$ with the action described by the formula $A.f(x):=\det(A) f(A^{-1}x)$. The complement parametrizes forms $f$ such that the induced projective curve $\VV(f)$ in $\PP^2$ is not $A_r$-prestable. 

We use the description as a quotient stack to compute its Chow ring. The strategy is similar to the one adopted in \cite{DiLorFulVis} with a new idea to simplify computations. 

As usual, we pass to the projectivization of $\AA(3,4)$ which we denote by $\PP^{14}$. We induce an action of $\GL_3$ on $\PP^{14}$ setting $A.[f]=[f(A^{-1}x)]$, and if we denote by $\overline{U}$ the projectivization of $U$, we get 
$$\ch_{\GL_3}(U)= \ch_{\GL_3}(\overline{U})/(c_1-h)$$
where $c_i$ is the $i$-th Chern class of the standard representation of $\GL_3$ and $h=\cO_{\PP^{14}}(1)$ the hyperplane section of the projective space of ternary forms. The idea is to compute the relations that come from the closed complement of $\overline{U}$ and then set $h=c_1$ to get the Chow ring of $\Mtilde_3\setminus (\Htilde_3\cup \Detilde_1)$ as the quotient of $\ch(\cB\GL_3)$ by these relations.

\begin{remark}
	Notice that $\lambda_i:=c_i(\HH)$ where $\HH$ is the Hodge bundle can be identified with the Chern classes of the dual of the standard representation.
\end{remark}

We consider the quotient (stack) of $\PP^{14}\times \PP^{2}$ by the following $\GL_3$-action
$$A.([f],[p]):=([f(A^{-1}x),Ap]),$$
and we denote by $Q_4$ the universal quartic over $[\PP^{14}/\GL_3]$, or equivalently the substack of $[\PP^{14}\times \PP^2/\GL_3]$ parametrizing pairs $([f],[p])$ such that $f(p)=0$.

Now, we introduce a slightly more general definition of $A_n$-singularity.

\begin{definition}\label{def:A-sing}
	We say that a point $p$ of a curve $C$ is an $A_{\infty}$-singularity if we have an isomorphism 
$$ \widehat{\cO}_{C,p}\simeq k[[x,y]]/(y^2).$$
Furthermore, we say that $p$ is a $A$-singularity if it an $A_n$-singularity for $n$ either a positive integer or $\infty$.
\end{definition}

We describe when an $A_{\infty}$-singularity can occur for plane curves.

\begin{lemma}
	A point $p$ of a plane curve $f$ is an $A_{\infty}$-singularity if and only if $p$ lies on a unique irreducible component $g$ of $f$ where $g$ is the square of a smooth plane curve.
\end{lemma}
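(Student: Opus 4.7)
The plan is to translate the $A_{\infty}$ condition at $p$ into a statement about a local equation $F$ for $f$ in the formal local ring $\widehat{\cO}_{\PP^{2}, p}\simeq k[[x,y]]$, and then match the resulting formal factorization against the global factorization of $f$ into irreducible factors, using that $k[[x,y]]$ is a UFD.

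For the backward direction ($\Leftarrow$), suppose $p$ lies on a unique irreducible component of the form $h^{2}$ with $h$ a smooth plane curve. Locally at $p$ one has $f = u\cdot h^{2}$ for some unit $u$ in $\widehat{\cO}_{\PP^{2}, p}$, because no other irreducible factor of $f$ vanishes at $p$. Since $V(h)$ is smooth at $p$, by Cohen's structure theorem we may choose formal coordinates $(x,y)$ centered at $p$ in which $h$ becomes $y$, so $(f) = (u y^{2}) = (y^{2})$ and therefore $\widehat{\cO}_{C, p} \simeq k[[x,y]]/(y^{2})$, which is exactly the $A_{\infty}$ condition.

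For the forward direction ($\Rightarrow$), assume $\widehat{\cO}_{C, p} \simeq k[[x,y]]/(y^{2})$. Fixing an isomorphism $\widehat{\cO}_{\PP^{2}, p}\simeq k[[x,y]]$ compatible with these quotients, a local equation $F$ of $f$ satisfies $(F) = (y^{2})$, i.e.\ $F = u y^{2}$ for some unit $u \in k[[x,y]]$. Now write the global factorization $f = \prod f_{i}^{e_{i}}$ with $f_{i}$ distinct irreducible polynomials. For each $f_{i}$ passing through $p$, integrality of $V(f_{i})$ combined with excellence of $\cO_{\PP^{2}, p}$ ensures that $\widehat{\cO}_{V(f_{i}), p}$ remains reduced, so the formal local equation of $f_{i}$ decomposes into a product of \emph{distinct} irreducible elements of $k[[x,y]]$ (the analytic branches of $f_{i}$ at $p$). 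Comparing the resulting factorization of $F$ with $F = u y^{2}$ in the UFD $k[[x,y]]$ forces every analytic branch appearing to be associate to $y$; hence there is a single such branch, only one $f_{i}$ can pass through $p$, and its multiplicity $e_{i}$ equals $2$. The unique analytic branch being $y$ (up to unit) means $V(f_{i})$ is smooth at $p$, so the irreducible component of $f$ through $p$ is $f_{i}^{2}$ with $f_{i}$ smooth at $p$.

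The main obstacle is the subtle point that an algebraically irreducible plane curve might a priori factor with repeated analytic branches in $k[[x,y]]$, which would ruin the matching of multiplicities; ruling this out is precisely where the excellence of the ambient local ring, and hence preservation of reducedness under completion, is needed.
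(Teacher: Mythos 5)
Your backward direction is correct, and your forward direction relies on the same two ingredients as the paper's proof---excellence (to control reducedness under completion) and unique factorization---though arranged dually: the paper descends non-reducedness from $\widehat{\cO}_{C,p}$ to the algebraic local ring and factors in the UFD $k[x,y]$, while you ascend reducedness of each component to its completion and factor in the UFD $k[[x,y]]$. However, there is a genuine gap at the last step of your forward direction. From ``every analytic branch appearing is associate to $y$'' you conclude that ``there is a single such branch, only one $f_i$ can pass through $p$, and $e_i=2$.'' The factorization count in $k[[x,y]]$ alone does not give this: it is a priori consistent with $F=uy^2$ that \emph{two} distinct irreducible components $f_{i_1}\neq f_{i_2}$ pass through $p$, each appearing in $f$ with multiplicity $e_{i_1}=e_{i_2}=1$, and each having a single analytic branch at $p$ associate to $y$; then $F=u\,F_{i_1}F_{i_2}$ with both $F_{i_1},F_{i_2}$ associate to $y$ matches $uy^2$ perfectly, yet no component of $f$ would be a square. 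Your argument implicitly treats branches of distinct global components as non-associate, and that is precisely what needs proof.

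To close the gap you need the standard fact that two distinct irreducible plane curves cannot share an analytic branch. One way: if $y$ divided both $F_{i_1}$ and $F_{i_2}$ in $k[[x,y]]$, then $k[[x,y]]/(F_{i_1},F_{i_2})$ would surject onto $k[[x,y]]/(y)\simeq k[[x]]$ and hence be infinite-dimensional over $k$; but $V(f_{i_1})\cap V(f_{i_2})$ is a finite scheme, so $k[[x,y]]/(F_{i_1},F_{i_2})$, being the completed local ring of this Artinian scheme at $p$, is finite-dimensional---a contradiction. Alternatively, you can borrow the paper's use of excellence applied to $C$ itself rather than to its components: since $\widehat{\cO}_{C,p}\simeq k[[x,y]]/(y^2)$ is non-reduced and $\cO_{C,p}$ is excellent, $\cO_{C,p}$ is non-reduced, so some $f_i$ through $p$ satisfies $e_i\geq 2$; your count (the total number of formal irreducible factors of $F$, with multiplicity, equals $2$) then forces a single component, a single branch, and $e_i=2$. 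Either patch is short, but as written the uniqueness of the component through $p$ is unsupported.
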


\begin{proof}
	Denote by $A$ the localization of $k[x,y]/(f)$ at the point $p$, which we can suppose to be the maximal ideal $(x,y)$. Because $A$ is an excellent ring and the completion is non-reduced, we get that $A$ is also non reduced. Let $h$ be a nilpotent element in $A$. Because the square of the nilpotent ideal in the completion is zero, we get that $h^2=0$. Since $k[x,y]$ is a UFD, we get the thesis.
\end{proof}

The reason why we introduced $A$-singularity is that they have an explicit description in terms of derivative of the defining equation for plane curves.

\begin{lemma}\label{lem:A-sing}
	A point $p$ on a plane curve defined by $f$ is not an $A$-singularity if and only if both the gradient and the Hessian of $f$ vanishes at the point $p$.
\end{lemma}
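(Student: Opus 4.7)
The strategy is to study the Taylor expansion of $f$ at $p$ and reduce the problem to a standard analytic normal form. Choose affine coordinates so that $p$ is the origin, and write $f = f_1 + f_2 + f_3 + \cdots$ with $f_i$ the homogeneous component of degree $i$ (so $f_0 = 0$ since $p \in \VV(f)$). The easy direction is then immediate: if both $\nabla f(p) = 0$ and the Hessian of $f$ at $p$ vanish, then $f_1 = f_2 = 0$ and $f$ has multiplicity at least $3$ at $p$; but each of the rings $k[[x,y]]/(y^2 - x^{h+1})$ (with $h \geq 0$) and $k[[x,y]]/(y^2)$ has multiplicity at most $2$ at its closed point, so $\widehat{\cO}_{C,p}$ is isomorphic to neither and $p$ is not an $A$-singularity.

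For the converse I would argue by cases. If $f_1 \neq 0$ then $p$ is a smooth point of the curve and there is nothing to show. If $f_1 = 0$ and $f_2 \neq 0$, then $f_2$ is a nonzero binary quadratic form and, since $\operatorname{char}\kappa \neq 2$, after a linear change of coordinates we may take $f_2 = xy$ or $f_2 = y^2$ according as the Hessian has rank $2$ or rank $1$. In the rank-$2$ case, Weierstrass preparation (equivalently the two-variable Morse/splitting lemma) produces formal coordinates $\tilde x,\tilde y$ with $f = \tilde x\,\tilde y$ in $\widehat{\cO}_{C,p}$, giving an $A_1$-singularity. In the rank-$1$ case, Weierstrass preparation in $y$ combined with completing the square yields formal coordinates with $f = \tilde y^2 + h(\tilde x)$ for some $h \in k[[\tilde x]]$ of order at least $3$, or $h = 0$. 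If $h = 0$ we obtain an $A_\infty$-singularity. Otherwise $h = c\,\tilde x^m + \cdots$ with $c \neq 0$ and $m \geq 3$, and absorbing the unit $c + O(\tilde x)$ by a formal reparametrization of $\tilde x$ (which requires extracting an $m$-th root, valid in the range of $m$ relevant here by the paper's hypothesis that $2,3,5,7$ are invertible in $\kappa$) yields $f \cong \tilde y^2 - x^m$, an $A_{m-1}$-singularity.

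The substantive step is the rank-$1$ case, where one genuinely needs the splitting lemma together with the extraction of an $m$-th root; the remainder is bookkeeping about multiplicities and the linear algebra of binary quadratic forms. The characteristic hypotheses on $\kappa$ are exactly what is required to complete the square and to normalize the leading coefficient of $h$.
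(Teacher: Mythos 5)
Your argument follows the same route as the paper's own proof, which simply invokes Weierstrass preparation and completion of the square, and most of the details you supply are sound: the multiplicity argument for the easy direction, the rank-$2$ (Morse) case, and the reduction of the rank-$1$ case to $\tilde y^2 + h(\tilde x)$ with $h=0$ or $\operatorname{ord} h = m \geq 3$ are all correct. The genuine weak point is the final normalization, where you absorb the unit by extracting an $m$-th root. In $k[[\tilde x]]$ an $m$-th root of a unit with constant term $1$ is produced by Hensel's lemma only when $m$ is invertible in $k$, since the linearization of $(1+w)^m = 1+v$ is multiplication by $m$; if $\operatorname{char} k = p$ divides $m$, such roots generally do not exist (e.g.\ $1+\tilde x$ has no $p$-th root). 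The paper assumes only that $2,3,5,7$ are invertible in $\kappa$, so $\operatorname{char}\kappa = p \geq 11$ is allowed, and the lemma is stated for arbitrary plane curves, with no bound on $m$. Your appeal to ``the range of $m$ relevant here'' is borrowed from the downstream application (quartics, where $m \leq 8$), not from the statement being proved. Concretely, in characteristic $11$ your argument cannot treat $f = y^2 - x^{11}(1+x)$, even though that point is in fact an $A_{10}$-singularity, so the lemma holds there while your proof does not reach it.

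The gap is real but easily closed, because $m$-th roots are never needed. Write the rank-one case as $y^2 - x^m u(x)$ with $u$ a unit. If $m = 2l+1$ is odd, the assignment $x \mapsto xu$, $y \mapsto y u^l$ defines an automorphism $\sigma$ of $k[[x,y]]$ (its Jacobian at the origin is invertible), and $\sigma(y^2 - x^m) = u^{2l}\,(y^2 - x^m u)$, so the ideals $(y^2 - x^m u)$ and $(y^2 - x^m)$ have isomorphic quotients and the point is an $A_{m-1}$-singularity; this uses only $\gcd(2,m)=1$ and works in every characteristic. If $m$ is even, choose $v \in k[[x]]$ with $v^2 = u$, which exists because $k$ is algebraically closed and $\operatorname{char} k \neq 2$; then $x \mapsto x$, $y \mapsto yv$ is an automorphism carrying $y^2 - x^m u$ to $u\,(y^2 - x^m)$. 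With this replacement --- square roots only, never $m$-th roots --- your proof becomes complete in the generality in which the lemma is stated.
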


\begin{proof}
	If $f$ is an $A$-singularity, one can compute its Hessian and gradient (up to a change of coordinates) looking at the complete local ring, therefore it is a trivial computation.
	
	On the contrary, if the gradient does not vanish, it is clear that $p$ is a smooth point of $f=0$. Otherwise, if the gradient vanishes but there is a double derivative different from zero, we can use Weierstrass preparation theorem and the square completion procedure ($ {\rm char}(\kappa)\neq 2$) to get the result. 
\end{proof}

Now, we introduce a weaker definition of Chow envelopes, which depends on what coefficients we consider for the Chow groups.

\begin{definition}
	Let $R$ be a ring. We say that a representable proper morphism $f:X\rightarrow Y$ is an algebraic Chow envelope for $Y$ with coefficients in $R$  if the morphism $f_*:\ch(X)\otimes_{\ZZ}R \rightarrow \ch(Y)\otimes_{\ZZ}R$ is surjective. 
\end{definition}

\begin{remark}
Recall the definition of Chow envelope between algebraic stacks as in Definition 3.4 of \cite{DiLorPerVis}. Because they are working with integer coefficients, Proposition 3.5 of \cite{DiLorPerVis} implies that an algebraic Chow envelope is an algebraic Chow envelope for every choice of coefficients.
\end{remark}

From now on, algebraic Chow envelopes with coefficients in $\ZZ[1/6]$ are simply called algebraic Chow envelopes.

Consider now the substack $X\subset Q_4$ parametrizing pairs $(f,p)$ such that $p$ is singular but not an $A$-singularity of $f$ . Thanks to the previous lemma, we can describe $X$ as the vanishing locus of the second derivates of $f$ in $p$. By construction, $X$ cannot be an algebraic Chow envelope for the whole complement of $\overline{U}$ because we have quartics $f$ which are squares of smooth conics and they do not appear in $X$. Therefore, we will also add the relations coming from the locus parametrizing squares of conics.

Firstly, we want to study the fibers of the proper morphism $X\rightarrow \PP^{14}$ to prove that it is an algebraic Chow envelope for its image. We need to understand how many and what kind of singular point appears in a reduced quartic in $\PP^2$.

\begin{lemma}\label{lem:sing-points}
	A reduced quartic in $\PP^2$ has at most $6$ singular points. If it has exactly $5$ singular points, then it is the union of a smooth conic and a reducible reduced one. If it has exactly $6$ singular points, then it is the union of two reducible reduced conics that do not share a component.
\end{lemma}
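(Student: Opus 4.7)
The plan is to do an elementary case analysis on the irreducible decomposition of the reduced quartic $Q$, bounding the number of singular points in each case by the sum of the singular points lying on each irreducible component plus the pairwise intersection points of distinct components.

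First, I would recall two standard facts. For a reduced irreducible plane curve of degree $d$, the number of singular points is bounded by the $\delta$-genus inequality $\binom{d-1}{2}$, so an irreducible line or conic is smooth, an irreducible cubic has at most one singular point, and an irreducible quartic has at most three. By B\'ezout, two distinct irreducible components of degrees $d_1,d_2$ meet in at most $d_1 d_2$ points (scheme-theoretically, hence in at most that many reduced points). A point $p\in Q$ is singular if and only if either it is a singular point of some component or it lies on at least two components, so the number of singular points of $Q$ is at most the sum of singular points on each component plus the number of distinct pairwise intersection points of different components.

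Next I would enumerate the possible irreducible decompositions of $Q$ according to the partitions of $4$, applying the two bounds above in each case:
\begin{itemize}
\item[$(4)$] $Q$ irreducible: at most $3$ singular points.
\item[$(3,1)$] irreducible cubic plus line: at most $1+3 = 4$.
\item[$(2,2)$] two irreducible (hence smooth) conics: at most $0+0+4 = 4$.
\item[$(2,1,1)$] irreducible smooth conic plus two distinct lines: at most $0+0+0+2+2+1 = 5$.
\item[$(1,1,1,1)$] four distinct lines: at most $\binom{4}{2} = 6$.
\end{itemize}
This immediately proves the bound of $6$ singular points. To conclude, I would argue that equality in the two top cases forces both the specific decomposition type and genericity of the configuration. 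If $Q$ has exactly $6$ singular points, then from the table the only possibility is partition $(1,1,1,1)$ with all six pairwise intersections distinct, which precisely means $Q = \ell_1\cup\ell_2\cup\ell_3\cup\ell_4$ with no three concurrent; grouping the lines in any way exhibits $Q$ as a union of two reducible reduced conics that do not share a component. If $Q$ has exactly $5$ singular points, then the only case that achieves $5$ is $(2,1,1)$ with all of the three intersection points of the conic with the two lines, together with the intersection of the two lines, distinct; this writes $Q$ as the union of the smooth conic and the reducible conic formed by the two lines.

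The only mildly delicate point is ruling out that the partitions giving bound $\leq 4$ could somehow achieve $5$; but this is automatic since the \emph{upper} bounds I wrote down are strict in those cases, so the conclusions on the decomposition type of $Q$ in the extremal cases are forced. No further computation is needed.
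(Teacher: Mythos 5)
Your overall strategy (case analysis on the irreducible decomposition, using the bound $\binom{d-1}{2}$ on singular points of an irreducible degree-$d$ curve together with B\'ezout for pairwise intersections) is sound and genuinely different from the paper's argument, which instead passes a conic through putative singular points and uses B\'ezout to force a common component. Your bound of $6$ and your treatment of the $6$-point case are correct. However, there is a genuine gap in the $5$-point case: you only rule out the partitions whose upper bound is $\leq 4$, but the four-line partition $(1,1,1,1)$ carries the upper bound $6 \geq 5$, so a priori a union of four distinct lines could have exactly $5$ singular points. This possibility is not ``automatic'' to exclude, and it is exactly the dangerous one: if it occurred, the lemma itself would be false, since a union of four lines contains no smooth conic as a component.

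The missing step is the combinatorial fact that four distinct lines in $\PP^2$ have exactly $6$, $4$, or $1$ pairwise intersection points, and never $5$. Indeed, coincidences among the six pairwise intersections occur only when three or more lines are concurrent. If some three of the lines meet at a point $P$ and the fourth avoids $P$, the three pairs among the concurrent lines all give $P$, while the fourth line contributes three further points which are pairwise distinct (a coincidence among them would create a second triple point, and then the two lines common to both triples would meet in two distinct points, which is impossible); this yields $4$ points. If all four lines pass through $P$ one gets $1$ point, and otherwise one gets $6$. Hence exactly $5$ singular points does force the partition $(2,1,1)$, and your argument goes through once this is inserted. (A small slip in the same sentence: to reach $5$ points in the $(2,1,1)$ case you need all \emph{four} intersection points of the conic with the two lines --- two on each line --- together with the point $\ell_1 \cap \ell_2$, to be distinct; you wrote ``three''.)
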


\begin{proof}
 	Suppose the quartic $F:=\VV(f)$ is irreducible. Then we can have at most $3$ singular points. In fact, suppose $p_1,\dots,p_4$ are four singular points. Then there exists a conic $Q$ passing through the four points and another smooth point of $f$. Thus $Q \cap F$ would have length at least $9$, which is impossible by Bezout's theorem. 
 	
 	The same reasoning cannot apply if $F$ is the union of two smooth conics meeting at four points, which is a possible situation. Nevertheless, if we suppose that $F$ has at least $5$ different singular points we would have that there exists a conic $Q$ inside $F$, therefore $F=Q\cup Q'$ with $Q'$ another conic because $F$ is a quartic. It is then clear that the singular points are at most $6$ and one can prove the rest of the statement case by case.
\end{proof}

 We denote by $Z_{\{2\}}^{[2]}$ the substack of quartics which are squares of smooth conics and by $\overline{Z}_{\{2\}}^{[2]}$ its closure in $\PP^{14}$. We use the same notation as in \cite{DiLorFulVis}.

Let us denote by $z_2$ the fundamental class of $\overline{Z}_{\{2\}}^{[2]}$ in $\ch_{\GL_3}(\PP^{14})$. We also denote by $\rho$ the morphism $X\rightarrow \PP^{14}$and by $i_T:T\into \PP^{14}$ the closed complement of $\overline{U}$ in $\PP^{14}$. We are ready for the main proposition.

\begin{proposition}
	The ideal generated by $\im{i_{T,*}}$ is equal to the ideal generated by $\im{\rho_{*}}$ and by $z_2$.
\end{proposition}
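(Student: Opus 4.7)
The plan is to establish the two ideal containments separately.

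The containment $(\im\rho_*, z_2) \subseteq \im i_{T,*}$ is immediate: $\rho$ factors set-theoretically through $T$, because any $f$ possessing a non-$A$-singular point fails to be $A_r$-prestable, and $\overline{Z}_{\{2\}}^{[2]} \subseteq T$ because a double smooth conic is non-reduced. For the reverse containment I would aim to prove that the proper morphism
\[
\rho \sqcup j \colon X \sqcup \overline{Z}_{\{2\}}^{[2]} \longrightarrow T
\]
(with $j$ the closed immersion) is an algebraic Chow envelope with $\ZZ[1/6]$-coefficients; the projection formula will then yield $\im i_{T,*} \subseteq (\im\rho_*, z_2)$.

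The first step is surjectivity on geometric points. Fix $[f]\in T$. If $V(f)$ is reduced then, since it has arithmetic genus $3$, the $\delta$-invariant bound excludes $A_h$-singularities with $h\ge 7$ (already $\delta_{A_7}=4>3$), so the failure of $A_r$-prestability must be caused by a non-$A$-singular point, which lifts to $X$ by \Cref{lem:A-sing}. If instead $V(f)$ is non-reduced, write $f=g^n h$ with $\gcd(g,h)=1$ and $n\ge 2$, and run through the cases $(\deg g,n)\in\{(1,2),(1,3),(1,4),(2,2)\}$; applying \Cref{lem:A-sing} at points of $V(g)\cap V(h)$, or along multiple lines, shows that in every case except $f=g^2$ with $g$ a smooth conic one finds a point which is singular but not an $A$-singularity, giving a lift to $X$. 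The remaining case lifts through $j$.

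The principal obstacle is upgrading this pointwise surjectivity to an algebraic Chow envelope. The plan is to stratify $T$ by the isomorphism type of its non-$A$-singular locus together with its reduced/non-reduced structure, and to induct on this stratification via the excision exact sequence in Chow groups. Over each stratum of reduced quartics the morphism $\rho$ is finite of degree at most $6$, bounded by \Cref{lem:sing-points}, so after inverting $6$ the pushforward $\rho_*$ surjects onto the Chow group of that stratum. Over the stratum $Z_{\{2\}}^{[2]}$ the morphism $\rho$ has empty fibres, but the closed embedding $\PP^5 \cong \overline{Z}_{\{2\}}^{[2]} \hookrightarrow \PP^{14}$ induced by $[g]\mapsto[g^2]$ pulls back the hyperplane class as $j^*h = 2 h_Q$; after inverting $2$ the pullback $j^*$ becomes surjective (equivariantly), and the projection formula places every $j_*$-class into $(z_2)$. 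Splicing these contributions through the localisation sequence yields the required surjectivity.
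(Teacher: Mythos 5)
Your route is essentially the paper's: the same decomposition of the complement into the locus $X$ of non-$A$-singular points and the squares-of-conics locus (the paper works with the parametrization $\beta\colon\PP^5\to\PP^{14}$, $g\mapsto g^2$, rather than the closed immersion onto $\overline{Z}_{\{2\}}^{[2]}$, but in characteristic $\neq 2$ these are the same map), the same case analysis of non-reduced quartics, and the same projection-formula argument (pullback of the hyperplane class is twice the hyperplane class of the space of conics, so after inverting $2$ the image of the pushforward from the conic space lies in the ideal $(z_2)$). The genuine gap is in how you upgrade pointwise surjectivity over the reduced locus. You assert that over each stratum of reduced quartics $\rho$ is finite of degree at most $6$, so that ``after inverting $6$ the pushforward $\rho_*$ surjects''. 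That inference is false: $5\le 6$, but $5$ is not a unit in $\ZZ[1/6]$, and the identity $\rho_*\rho^*=d\cdot\mathrm{id}$ yields surjectivity only when the degree $d$ itself is invertible, i.e.\ $d\in\{1,2,3,4,6\}$. The degree-$5$ case is precisely the one the paper isolates: by \Cref{lem:sing-points} it arises exactly for unions of a smooth conic and a reducible reduced conic, and the paper resolves it not by inverting anything but by exhibiting a rational point of the fiber --- the node of the reducible conic, which is canonically defined over the base field and hence gives a section over that stratum. Without this extra idea your Noetherian induction through the stratification does not close. A smaller omission of the same kind: your upgrading step treats only the reduced strata and the double-conic stratum, but over the strata of a triple line plus a line and of a quadruple line the fibers of $\rho$ are one-dimensional, so finiteness fails there and one needs the projective-bundle (or section) argument, as in the paper's cases (2) and (4).

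Separately, your justification that every reduced quartic in $T$ carries a non-$A$-singular point is incorrect as stated. You claim the $\delta$-invariant bound excludes $A_h$ with $h\ge 7$ because $\delta_{A_7}=4>3$; but the inequality $\sum_p\delta_p\le p_a$ holds only for irreducible curves, and for a reduced curve with $c$ components the correct bound is $\sum_p\delta_p\le p_a+c-1$. Indeed $A_7$-singularities do occur on reduced quartics (two smooth conics meeting at a single point with contact of order $4$) --- this is exactly why the paper takes $r=7$ for genus $3$: such curves are $A_7$-stable and lie in $\overline{U}$, not in $T$. What you actually need is that $A_h$ with $h\ge 8$ cannot occur on a reduced quartic: an $A_8$-point is unibranch with $\delta=4$, exceeding the bound $(d-1)(d-2)/2\le 3$ valid for an irreducible plane curve of degree $d\le 4$, while an $A_9$-point has two smooth branches meeting with contact order $5$, violating Bezout for two components of total degree $\le 4$ (and similarly for higher $h$). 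The conclusion you want survives, but only with this corrected argument.
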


\begin{proof}
 Consider $\PP^5$ the space of conics in $\PP^2$ with the action of $\GL_3$ defined by the formula $A.f(x):=f(A^{-1}x)$ and the equivariant morphism $\beta:\PP^5 \rightarrow \PP^{14}$ defined by the association $f \mapsto f^2$. We are going to prove that 
 $$ \rho \sqcup \beta : X \sqcup \PP^5 \longrightarrow \PP^{14}$$
 is an algebraic Chow envelope for $T$ and that the only generator of the image of $\beta_*$ is the fundamental class $\beta_*(1)$, which coincides with $z_2$.
 
 Let $L/\kappa$ be a field extension. First of all, \Cref{lem:sing-points} tells us that a reduced quartic in $\PP^2$ has at most $6$ singular points. Therefore if $f$ is an $L$-point of $\PP^{14}$ which represents a reduced quartic, the fiber $\rho^{-1}(f)\rightarrow \spec L$ is a finite flat morphism of degree at most $6$. As we are inverting $2$ and $3$ in the Chow rings, the only case we need to worry is when the morphism has degree $5$, i.e. when $f$ is union of a singular conic and a smooth conic. However, in that situation we have a rational point, namely the section that goes to the intersection of the two lines that form the singular conic. This prove that $\rho$ is an algebraic Chow envelope for the open of reduced curves in $T$.
 
 Consider a $L$-point $f$ of $\PP^{14}$ which represents a non-reduced quartic. Then $f$ is one of the following:
 \begin{enumerate}
 	\item[(1)] $f$ is the product of a double line and a reduced conic that does not contain the line,
 	\item[(2)] $f$ is the product of a triple line and a different line,
 	\item[(3)] $f$ is the product of two different double lines,
 	\item[(4)] $f$ is the fourth power of a line,
  	\item[(5)] $f$ is a double smooth conic.
 \end{enumerate}
For a more detailed discussion, see Section 1 of \cite{DiLorFulVis}.

We are going to prove that in situations from (1) to (4), the fiber $\rho^{-1}(f)$ is an algebraic Chow envelope for $\spec L$. In cases (1) and (3), we have that the fiber is finite of degree respectively 2 and 1, therefore an algebraic Chow envelope. In cases (2) and (4) the fiber is a line, which is a projective bundle and therefore an algebraic Chow envelope (we do not have to worry about non-reduced structures).

Clearly the fiber $\rho^{-1}(f)$ in the case (5) is empty, as $f$ has only $A_{\infty}$-singularities as closed points. Therefore we really need the morphism $\beta$ which is an algebraic Chow envelope for the case (5). 

It remains to prove the image of $\beta_*$ is generated by $\beta_*(1)$. This follows from the fact that $\beta^{*}(h_{14})=2h_{5}$, where $h_{14}$ (respectively $h_5$) is the hyperplane section of $\PP^{14}$ (respectively $\PP^5$).

\end{proof}

We conclude this section computing explicitly the relations we need and finally getting the Chow ring of $\Mtilde_3\setminus (\Detilde_1 \cup \Htilde_3)$.

The computation for the class $z_2$ can be done using an explicit localization formula. As a matter of fact, the exact computation was already done in Proposition 4.6 of \cite{DiLorFulVis}, although it was not shown as it was not relevant for their computations.

\begin{remark}
	After the identification $h=-\lambda_1$, the localization formula gives us
	\begin{equation*}
	\begin{split}
	z_2=-1152\lambda_1^3\lambda_3^2 + 256\lambda_1^2\lambda_2^2\lambda_3 + 5824\lambda_1\lambda_2\lambda_3^2 - 1152\lambda_2^3\lambda_3 - 10976\lambda_3^3.
	\end{split}
	\end{equation*}
\end{remark}

In order to compute the ideal generated by $\im{\rho_*}$, we introduce a simplified description of $Q_4$ and of $X$.

\begin{lemma}
	The universal quartic $Q_4$ is naturally isomorphic to the quotient stack $[\PP^{13}/H]$ where $H$ is a parabolic subgroup of $\GL_3$.
\end{lemma}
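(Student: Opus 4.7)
The plan is to use the second projection $Q_4 \to [\PP^2/\GL_3]$ and identify the target as the classifying stack of a parabolic subgroup.

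First I would observe that the $\GL_3$-action on $\PP^2$ is transitive, so fixing a point $[p_0]\in \PP^2(\kappa)$ yields an isomorphism $[\PP^2/\GL_3]\simeq BH$, where $H\subset \GL_3$ is the stabilizer of $[p_0]$. Concretely, if we choose $[p_0]=[0:0:1]$, then $H$ consists of matrices of the shape
\[
\begin{pmatrix} * & * & 0 \\ * & * & 0 \\ * & * & * \end{pmatrix},
\]
which is a (maximal) parabolic subgroup of $\GL_3$.

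Next I would form the $2$-cartesian diagram obtained by pulling back $Q_4\hookrightarrow [\PP^{14}\times \PP^2/\GL_3]$ along $\spec \kappa\to [\PP^2/\GL_3]$ classifying $[p_0]$. On the one hand, the fiber of the universal quartic over the chosen point of $\PP^2$ is the linear subspace $V\subset \PP^{14}$ of quartics $f$ satisfying $f(p_0)=0$; since this is a single linear condition, $V\simeq \PP^{13}$, and the $H$-action on $\PP^{14}$ preserves $V$ (because $H$ fixes $[p_0]$). On the other hand, since the map $[\PP^2/\GL_3]\to BH$ is an equivalence, the same pullback gives $Q_4\simeq [V/H]$.

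Combining the two identifications yields $Q_4\simeq [\PP^{13}/H]$ with $H$ a parabolic subgroup of $\GL_3$, as claimed. The only point to verify carefully is that the residual $H$-action on the fiber agrees under both identifications, which is straightforward since everything descends from the given $\GL_3$-action on $\PP^{14}\times\PP^2$; there is no real obstacle, the argument is essentially a torsor-fiber computation.
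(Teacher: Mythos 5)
Your proof is correct and takes essentially the same route as the paper: both use the transitivity of the $\GL_3$-action on $\PP^2$ to reduce the structure group to the stabilizer $H$ of $[0:0:1]$ (identifying $[\PP^{14}\times\PP^2/\GL_3]\simeq[\PP^{14}/H]$), and then observe that the incidence condition $f([0:0:1])=0$ is a single linear equation cutting out a hyperplane $\PP^{13}\subset\PP^{14}$ preserved by $H$. Your formulation via the pullback along $\spec\kappa\to[\PP^2/\GL_3]\simeq \cB H$ is just a more explicit, torsor-theoretic phrasing of the paper's slice argument.
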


\begin{proof}
	The action of $\GL_3$ is transitive over $\PP^2$, therefore we can consider the subscheme $\PP^{14}\times \{[0:0:1]\}$ in $\PP^{14}\times \PP^2$. If we denote by $H$ the stabilizers of the point $[0:0:1]$ in $\PP^{2}$, we get that 
	$$[\PP^{14}\times \PP^2/\GL_3]\simeq [\PP^{14}/H].$$
	
	The statement follows from noticing that the equation $f([0:0:1])=0$ determines a hyperplane in $\PP^{14}$.
\end{proof}

\begin{remark}\label{rem:H}
	The group $H$ can be described as the subgroup of $\GL_3$ of matrices of the form
	$$
	\begin{pmatrix}
	a & b & 0 \\
	c & d & 0 \\
	f & g & h \\
	\end{pmatrix}.
	$$
	Notice that the submatrix
	$$
	\begin{pmatrix}
	a & b \\
	c & d
	\end{pmatrix}
	$$
	is in $\GL_2$, and in fact we can describe $H$ as $(\GL_2\times \gm)\ltimes \ga^2$.
\end{remark}

This description essentially centers our focus in the point $[0:0:1]$. This implies that the coordinates of the space $\PP^{14}$ can be identified with the coefficient of the Taylor expansion of $f$ in $[0:0:1]$ and this is very useful for computations. \Cref{lem:A-sing} implies that $X$ can be described inside $[\PP^{14}/H]$ as a projective subbundle of codimension $6$. Namely, $X$ is the complete intersection described by the equations $$a_{00}=a_{10}=a_{01}=a_{20}=a_{11}=a_{02}=0$$
where $a_{ij}$ is the coefficient of the monomial $x^iy^jz^{4-i-j}$. One can verify easily that this set of equations is invariant for the action of $H$. This gives us an easy way of computing the fundamental class of $X$ in $\ch_{\GL_3}(\PP^{14}\times\PP^2) \simeq \ch_H(\PP^{14})$.

The fact that $X$ is a projective subbundle implies that it is easy to compute the ideal generated by $\im{\rho_*}$, where $\rho: X\rightarrow \PP^{14}$ is the $\GL_3$-equivariant morphism previously defined. We can repeat the exact same strategy adopted in Section 5 of \cite{FulVis} to prove our version of Theorem 5.5 as in \cite{FulVis}. For the sake of notation, we denote by $h_2$ (respectively $h_{14}$) the hyperplane section of $\PP^2$ (respectively $\PP^{14}$).

\begin{proposition}
	There exists a unique polynomial $p(h_{14},h_{2})$ with coefficients in $\ch(\cB \GL_3)$ such that 
	\begin{itemize}
		\item $p(h_{14},h_{2})$ represents the fundamental class of $X$ in the $\GL_3$-equivariant Chow ring of $\PP^{14}\times \PP^2$,
		\item the degree of $p$ with respect to the variable $h_{14}$ is strictly less than $15$,
		\item the degree of $p$ with respect to the variable $h_{2}$ is strictly less than $3$.
	\end{itemize} 
	Furthermore, if $p$ is of the form $$p_2(h_{14})h_2^2+p_1(h_{14})h_2+p_0(h_{14})$$
	 with $p_0,p_1,p_2 \in \ch_{\GL_3}(\PP^{14})$ and $\deg p_i \leq 14$, we have that $\im{\rho_*}$ is equal to the ideal generated by $p_0$, $p_1$ and $p_2$ in $\ch_{\GL_3}(\PP^{14})$. 
\end{proposition}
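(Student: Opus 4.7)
The approach is to exploit that both $\PP^{14}$ and $\PP^2$ are projective bundles over $\cB\GL_3$, and then combine this with the projection formula for the factorization $\rho=\pi\circ i$, where $i:X\hookrightarrow \PP^{14}\times \PP^2$ is the closed immersion and $\pi:\PP^{14}\times \PP^2\to \PP^{14}$ is the first projection.

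First, I would establish the existence and uniqueness of $p$. Since $\PP^{14}=\PP(V)$ for $V$ the rank-$15$ representation of quartic forms and $\PP^2=\PP(W)$ for $W$ the standard representation, applying the projective bundle formula twice shows that $\ch_{\GL_3}(\PP^{14}\times \PP^2)$ is a free $\ch(\cB\GL_3)$-module with basis $\{h_{14}^{i}h_2^{j}:0\le i\le 14,\ 0\le j\le 2\}$. Writing $[X]$ uniquely in this basis yields the desired polynomial $p(h_{14},h_2)$; the degree bounds are precisely the statement that this expression lies in the above basis.

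Second, I would show the restriction map $i^{*}:\ch_{\GL_3}(\PP^{14}\times \PP^2)\to \ch_{\GL_3}(X)$ is surjective. Using the identification $[\PP^{14}\times \PP^2/\GL_3]\simeq [\PP^{14}/H]$ from the preceding lemma, $X$ corresponds to the common zero locus of the six $H$-equivariant linear forms $a_{00},a_{10},a_{01},a_{20},a_{11},a_{02}$ on $\PP^{14}$; since these forms restrict to six linearly independent linear conditions on each fiber of $\PP^{14}\times \PP^2\to\PP^2$, the morphism $X\to \PP^2$ is a $\PP^{8}$-subbundle of the trivial $\PP^{14}$-bundle over $\PP^2$. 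By the projective bundle formula applied to $X\to \PP^2$, $\ch_{\GL_3}(X)$ is generated as a $\ch_{\GL_3}(\PP^2)$-algebra by the restriction $h_{14}|_{X}$, so every class in $\ch_{\GL_3}(X)$ lifts to a polynomial $q(h_{14},h_2)\in\ch_{\GL_3}(\PP^{14}\times \PP^2)$.

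Third, I would compute $\im{\rho_*}$. For $\alpha=i^{*}q$, the projection formula gives $i_{*}\alpha=q\cdot[X]=q\cdot p$, hence $\rho_{*}\alpha=\pi_{*}(q\cdot p)$. Expanding $q$ in the free basis and pulling $h_{14}^{i}$ outside $\pi_{*}$ (projection formula for $\pi$), we see that $\im{\rho_*}$ is the $\ch_{\GL_3}(\PP^{14})$-ideal generated by $\pi_{*}(h_2^{j}\cdot p)$ for $j=0,1,2$. Using the relation $h_2^{3}=c_1(W)h_2^{2}-c_2(W)h_2+c_3(W)$ in $\ch_{\GL_3}(\PP^2)$ together with $\pi_{*}(h_2^{k})=\delta_{k,2}$ for $0\le k\le 2$, one reduces each $h_2^{j}\cdot p$ modulo the cubic relation and reads off the coefficient of $h_2^{2}$. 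The three resulting classes express $(\pi_{*}(p),\pi_{*}(h_2 p),\pi_{*}(h_2^{2}p))$ as a triangular combination of $(p_2,p_1,p_0)$ with unit determinant, so they generate the same ideal as $p_0,p_1,p_2$.

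The main obstacle is the verification in the second step that $X\to \PP^2$ is genuinely a projective subbundle, i.e.\ that the six forms $a_{ij}$ remain linearly independent on every fiber; this requires a direct inspection of the $H$-equivariant structure, but once it is settled the remaining steps are formal manipulations with the projection formula and the standard Chern polynomial relation in $\ch_{\GL_3}(\PP^2)$.
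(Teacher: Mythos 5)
Your proposal is correct and takes essentially the same approach as the paper, whose own proof is simply a deferral to Section 5 of \cite{FulVis}: the argument carried out there is precisely your combination of (i) the observation that $X\to\PP^2$ is an equivariant $\PP^8$-subbundle (the six jet conditions have constant rank since $\cO_{\PP^2}(4)$ is $2$-jet ample), giving surjectivity of $i^*$, (ii) the projection formula $\rho_*(i^*q)=\pi_*(q\cdot p)$, and (iii) the fact that $\pi_*(h_2^k)=\delta_{k,2}$ for $k\le 2$, so that $\bigl(\pi_*(p),\pi_*(h_2p),\pi_*(h_2^2p)\bigr)$ is a unipotent-triangular transformation of $(p_2,p_1,p_0)$ and hence generates the same ideal. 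No gaps to report.
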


\begin{proof}
	For a detailed proof of the proposition see Section 5 of \cite{FulVis}.
\end{proof}

An easy computation of the fundamental class of $X$ in the $\GL_3$-equivariant Chow ring of $\PP^{14}\times \PP^2$ gives us the description of the Chow ring of $\Mtilde_3\setminus (\Detilde_1 \cup \Htilde_3)$.

\begin{corollary}\label{cor:chow-quart}
	We have an isomorphism of rings
	$$ \ch\Big(\Mtilde_3\setminus (\Detilde_1 \cup \Htilde_3)\Big) \simeq \ZZ[\lambda_1,\lambda_2,\lambda_3]/(z_2,p_0,p_1,p_2)$$
	where the generators of the ideal can be described as follows:
	\begin{itemize}
		\item $p_2=12\lambda_1^4 - 44\lambda_1^2\lambda_2 + 92\lambda_1\lambda_3$,
		\item $p_1=-14\lambda_1^3\lambda_2 + 2\lambda_1^2\lambda_3 + 48\lambda_1\lambda_2^2 - 96\lambda_2\lambda_3$,
		\item $p_0=15\lambda_1^3\lambda_3 - 52\lambda_1\lambda_2\lambda_3 + 112\lambda_3^2$,
		\item $z_2=-1152\lambda_1^3\lambda_3^2 + 256\lambda_1^2\lambda_2^2\lambda_3 + 5824\lambda_1\lambda_2\lambda_3^2 - 1152\lambda_2^3\lambda_3 - 10976\lambda_3^3.$
	\end{itemize}
\end{corollary}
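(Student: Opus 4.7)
The plan is to combine the preceding results to unwind the Chow ring. Starting from $\Mtilde_3\setminus(\Detilde_1\cup \Htilde_3)\simeq[U/\GL_3]$ with $U\subset \AA(3,4)$, I would projectivize to $\overline{U}\subset \PP^{14}$ and apply excision together with the relation $c_1=h_{14}$ recalled at the start of the section to write
$$\ch\bigl(\Mtilde_3\setminus(\Detilde_1\cup \Htilde_3)\bigr)\ =\ \ch_{\GL_3}(\PP^{14})\Big/\bigl(I_T,\,c_1-h_{14}\bigr),$$
where $I_T\subset \ch_{\GL_3}(\PP^{14})$ denotes the ideal generated by $\im{i_{T,*}}$. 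The Chow envelope proposition already says that $I_T$ is generated by $\im{\rho_*}$ together with the single class $z_2$, whose explicit expression in the $\lambda_i$ was recorded in the preceding remark. Hence all that remains is to compute $\im{\rho_*}$ and then to impose the substitution $h_{14}=c_1=-\lambda_1$.

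For $\im{\rho_*}$ I invoke the last proposition: it is enough to compute the fundamental class $[X]$ in $\ch_{\GL_3}(\PP^{14}\times\PP^2)$ and reduce it modulo the cubic relation $h_2^3-\lambda_1 h_2^2+\lambda_2 h_2-\lambda_3=0$ to put it into the unique normal form $[X]=p_2(h_{14})h_2^2+p_1(h_{14})h_2+p_0(h_{14})$. Via the identification $[\PP^{14}\times\PP^2/\GL_3]\simeq[\PP^{14}/H]$, the closed substack $X$ is a regular complete intersection of the six coordinate hyperplanes $\{a_{ij}=0\}$ for $(i,j)\in\{(0,0),(1,0),(0,1),(2,0),(1,1),(0,2)\}$. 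Each $a_{ij}$ is an $H$-semi-invariant section of $\cO_{\PP^{14}}(1)$; its weight is read off from the action $A.f(x)=\det(A)f(A^{-1}x)$, and its vanishing locus has class $h_{14}+\chi_{ij}\in \ch_H^1(\PP^{14})$, where $\chi_{ij}$ is a $\ZZ$-linear combination of $\lambda_1$ and $h_2$ under the presentation $\ch(\cB H)\simeq \ch(\cB\GL_3)[h_2]/(h_2^3-\lambda_1 h_2^2+\lambda_2 h_2-\lambda_3)$. The fundamental class of $X$ is then the explicit product $\prod_{(i,j)}(h_{14}+\chi_{ij})$, which one expands and reduces as above.

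The main obstacle is purely computational: determining the six weights $\chi_{ij}$ correctly and then carrying out the expansion together with the reduction modulo the cubic relation. Each individual step is routine, but the expansion produces a polynomial of degree six in $h_{14}$ whose coefficients are of degree up to six in $h_2$, and repeated application of the cubic relation is error-prone enough to warrant computer algebra. Once $p_0,p_1,p_2$ are obtained as polynomials in $h_{14}$ with coefficients in $\ZZ[1/6][\lambda_1,\lambda_2,\lambda_3]$, imposing $h_{14}\mapsto -\lambda_1$ produces the three polynomials listed in the statement, which together with $z_2$ yields the claimed presentation.
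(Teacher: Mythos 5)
Your proposal follows exactly the paper's route: projectivize to $\PP^{14}$, use the Chow--envelope proposition to reduce $I_T$ to $\im{\rho_*}$ together with $z_2$, use the normal-form proposition to reduce $\im{\rho_*}$ to the coefficients $p_0,p_1,p_2$ of $[X]$, and substitute $h_{14}=c_1=-\lambda_1$. However, there is a genuine error in the step where you compute $[X]$: the claim that \emph{each} $a_{ij}$ is an $H$-semi-invariant section of $\cO_{\PP^{14}}(1)$ whose weight $\chi_{ij}$ lies in $\ZZ\lambda_1+\ZZ h_2$ is false, and the recipe as written cannot be carried out. The group $H\simeq(\GL_2\times\gm)\ltimes\ga^2$ contains the $\GL_2$-block mixing the coordinates $x$ and $y$, so it mixes $(a_{10},a_{01})$ among themselves (and with $a_{00}$), and $(a_{20},a_{11},a_{02})$ among themselves (and with the lower-order coefficients): these spans are twists of the standard $\GL_2$-representation and of its $\mathrm{Sym}^2$, and only $a_{00}$ is genuinely semi-invariant. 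Concretely, writing $u_1,u_2,u_3$ for the Chern roots of the standard representation of $\GL_3$, the diagonal torus acts on $a_{10}$ through a character of class $u_2-2u_3$ (up to the sign convention of the linearization), which is not symmetric under $u_1\leftrightarrow u_2$; but every degree-one class of $\ch(\cB H)$ restricts to a class of the form $\alpha(u_1+u_2)+\beta u_3$, since the degree-one part is spanned by $\lambda_1$ and $h_2$. Hence no $\chi_{10}$ with your stated property exists, and the loci $\{a_{10}=0\}$, $\{a_{01}=0\}$, $\{a_{20}=0\}$, $\{a_{02}=0\}$ are not even $H$-invariant divisors, so they have no classes in the degree-one part of $\ch_H(\PP^{14})$.

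The fix is what the paper does implicitly when it asserts only that the \emph{set} of six equations is $H$-invariant: the six linear forms span a six-dimensional $H$-subrepresentation $W$ of the dual of $\AA(3,4)$, filtered with graded pieces a character, a twist of the standard $\GL_2$-representation, and a twist of its $\mathrm{Sym}^2$; the substack $X$ is the zero locus of the tautological $H$-equivariant section of the rank-$6$ bundle $\cO_{\PP^{14}}(1)\otimes W$, so $[X]=c_6\bigl(\cO_{\PP^{14}}(1)\otimes W\bigr)$. This top Chern class is computed by the Whitney formula, or equivalently, after restricting to the Borel subgroup, as the product of the six linear factors $(h_{14}+\text{torus weight})$ --- but those factors live only in the $B$-equivariant (or torus-equivariant) ring; it is their product, being invariant under the Weyl group $u_1\leftrightarrow u_2$ of $H$, that descends to $\ch_H(\PP^{14})\simeq\ch_{\GL_3}(\PP^{14}\times\PP^2)$. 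With this correction the rest of your plan is sound and coincides with the paper's proof: your cubic relation $h_2^3-\lambda_1h_2^2+\lambda_2h_2-\lambda_3=0$ is the correct presentation of $\ch(\cB H)$ over $\ch(\cB\GL_3)$, reduction modulo it puts $[X]$ into the unique normal form of the normal-form proposition, and the substitution $h_{14}\mapsto-\lambda_1$ together with $z_2$ yields the stated generators of the ideal.
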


\begin{remark}
	We remark that $z_2$ is not in the ideal generated by the other relations, meaning that it was really necessary to introduce the additional stratum of non-reduced curves for the computations.
\end{remark}

\section{Description of $\Detilde_1 \setminus \Detilde_{1,1}$}\label{sec:4}

To describe $\Detilde_1\setminus \Detilde_{1,1}$, we recall the gluing morphism in the case of stable curves:
$$ \Mbar_{1,1} \times \Mbar_{2,1} \longrightarrow \Debar_1,$$ 
which is an isomorphism outside $\Debar_{1,1}$. The same morphism can be defined for the $A_r$-stable case, namely 
$$ \Mtilde_{1,1} \times \Mtilde_{2,1} \longrightarrow \Detilde_1$$ 
and it is an isomorphism outside $\Detilde_{1,1}$.w

Thus, we need to describe the preimage of $\Detilde_{1,1}$ through the morphism. Denote by $\ThTilde_1\subset \Mtilde_{2,1}$ the pullback of $\Detilde_1 \subset \Mtilde_2$ through the morphism $\Mtilde_{2,1} \rightarrow \Mtilde_2$ which forgets the section. Thus one can easily prove that  the preimage of $\Detilde_{1,1}$ through the map 
$$ \Mtilde_{1,1} \times \Mtilde_{2,1} \longrightarrow \Detilde_1$$ 
is equal to $\Mtilde_{1,1} \times \Theta_1$ and therefore
$$ \Mtilde_{1,1}\times (\Mtilde_{2,1}\setminus \ThTilde_1) \simeq \Detilde_1 \setminus \Detilde_{1,1}.$$

To compute the Chow ring of $\Detilde_1\setminus \Detilde_{1,1}$, it remains to describe the stack $\Mtilde_{2,1}\setminus \ThTilde_1$. We start by describing $\Ctilde_2 \setminus \ThTilde_1$, the universal curve over $\Mtilde_2\setminus \Detilde_1$. 

\begin{proposition}
	We have the following isomorphism
	$$  \Ctilde_2 \setminus \ThTilde_1 \simeq [\widetilde{\AA}(6)\setminus 0/B_2],$$ 
	where $B_2$ is the Borel subgroup of lower triangular matrices inside $\GL_2$ and $\widetilde{\AA}(6)$ is a $7$-dimensional $B_2$-representation.
\end{proposition}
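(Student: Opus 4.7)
The plan is to parametrize $\Ctilde_2\setminus\ThTilde_1$ as a quotient stack, using the hyperelliptic description of genus-$2$ curves and realizing the universal curve as a branched double cover of the universal $\PP^1$-quotient. By \Cref{lem:max-comp}, the quotient of any curve in $\Mtilde_2\setminus\Detilde_1$ is an integral $\PP^1$ on which $\deg\cL=-3$; specializing \Cref{prop:descr-hyper} to this situation (condition (b3) of \Cref{def:hyp-A_r} is automatic because a nonzero section of $\cO(6)$ vanishes to order at most $6=r+1$) yields
$$
\Mtilde_2\setminus\Detilde_1 \;\simeq\; [(\AA(6)\setminus 0)/\GL_2],
$$
where $\AA(6)$ is the $7$-dimensional $\GL_2$-representation of binary sextics with an appropriate $\det$-twist. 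Because $\GL_2$ acts transitively on $\PP^1$ with stabilizer of $[0{:}1]$ equal to the lower triangular Borel $B_2$, the universal $\PP^1$-bundle over $\Mtilde_2\setminus\Detilde_1$ is $[(\AA(6)\setminus 0)/B_2]$.

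Next I would realize the universal curve as a branched double cover of this $\PP^1$-bundle. Viewed inside the total space of $\cL^{-1}\simeq\cO(3)$, the hyperelliptic curve is $\{y^2=f\}$, whose fibre above $[0{:}1]$ is $\{y^2=a_6\}$ for $a_6$ the coefficient of $z^6$ in $f$. To linearize this equation, I decompose $\AA(6)=A_0\oplus\mathrm{Fib}$, where $A_0$ is the $6$-dimensional $B_2$-subrepresentation of sextics vanishing at $[0{:}1]$ and $\mathrm{Fib}\simeq\AA(6)/A_0$ is the $1$-dimensional quotient on which $a_6$ is a coordinate. Since $B_2$ is solvable and its unipotent radical acts trivially on any $1$-dimensional representation, the $B_2$-character of $\mathrm{Fib}$ factors through the torus, and a direct computation shows it to be a perfect square; letting $L$ be a $B_2$-character with $L^{\otimes 2}\simeq\mathrm{Fib}$, I set
$$
\widetilde{\AA}(6)\;:=\;A_0\oplus L.
$$
The map $(a_0,\ldots,a_5,y)\mapsto(a_0,\ldots,a_5,y^2)$ is then a $B_2$-equivariant branched double cover $\widetilde{\AA}(6)\to\AA(6)$.

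Finally, pulling back the universal curve along $[\widetilde{\AA}(6)\setminus 0/B_2]\to [(\AA(6)\setminus 0)/B_2]$ yields a canonical section (defined by the new coordinate $y$), and composing with the projection to $\Mtilde_2\setminus\Detilde_1$ gives a morphism to $\Ctilde_2\setminus\ThTilde_1$. To verify that it is an isomorphism I would invert it on $T$-points: given $(C,p)$, use the $\GL_2$-action to move the image of $p$ under the hyperelliptic quotient to $[0{:}1]$, reducing the data to a pair $(f,y)$ well-defined up to $B_2$. The open condition $f\neq 0$ translates to $(a_0,\ldots,a_5,y)\neq 0$ since $a_6=y^2$, so one needs to remove exactly the origin. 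The main technical step is the square-root computation for the $B_2$-character of $\mathrm{Fib}$, without which $\widetilde{\AA}(6)$ would only exist as a $\mu_2$-gerbe rather than as a genuine vector bundle.
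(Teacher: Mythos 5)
Your overall strategy is the same as the paper's (which in turn follows Proposition 3.1 of \cite{DiLorPerVis}): realize $\Mtilde_2\setminus\Detilde_1$ as $[(\AA(6)\setminus 0)/\GL_2]$, pass to the Borel $B_2=\mathrm{Stab}([0{:}1])$ to account for the choice of a point on the quotient $\PP^1$, and adjoin a square-root coordinate for the fibre of the double cover. But the step where you ``decompose'' $\AA(6)=A_0\oplus\mathrm{Fib}$ is a genuine error: the exact sequence of $B_2$-representations
$$ 0\longrightarrow A_0\longrightarrow \AA(6)\xrightarrow{\;\mathrm{ev}\;}\mathrm{Fib}\longrightarrow 0 $$
does \emph{not} split. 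A $B_2$-stable complement of $A_0$ would be a $B_2$-eigenline; since the weights of the maximal torus $T_2$ on $\AA(6)$ are pairwise distinct, such a line is spanned by a monomial, and the only monomial whose span is preserved by the unipotent radical ($x_0\mapsto x_0$, $x_1\mapsto x_1-ux_0$) is $x_0^6$, which lies in $A_0$ because it vanishes at $[0{:}1]$. (This is a general phenomenon: for $B=\mathrm{Stab}(q)$ the evaluation map $\H^0(\PP^1,\cO(n))\to\cO(n)\vert_{q}$ never splits $B$-equivariantly for $n\geq 1$, the unique $B$-stable line being that of sections vanishing to order $n$ at $q$.) Consequently your map $A_0\oplus L\to\AA(6)$, $(a_0,\dots,a_5,y)\mapsto(a_0,\dots,a_5,y^2)$, is only $T_2$-equivariant, not $B_2$-equivariant, and the construction of the morphism to $\Ctilde_2\setminus\ThTilde_1$ by pulling back the universal curve along it breaks down. (You also use silently that every $A_5$-stable genus-$2$ curve without separating nodes is hyperelliptic, so that \Cref{prop:descr-hyper} covers all of $\Mtilde_2\setminus\Detilde_1$; this is true, via global generation of $\omega_C$ off $\Detilde_1$, but deserves a word.)

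The repair is exactly the paper's construction: define $\widetilde{\AA}(6)$ as the $B_2$-invariant closed subscheme of $\AA(6)\times\AA^1$ of pairs $(f,s)$ with $f(0,1)=s^2$, where $\AA^1$ carries the square-root character $A\mapsto\det(A)a_{22}^{-3}$ (your computation of that character, and of why it exists, is correct); the equivariant double cover is then simply the first projection, and no splitting is needed. Be aware, however, that the resulting object --- an affine $7$-space with coordinates $(a_0,\dots,a_5,s)$ --- is genuinely \emph{not} a linear $B_2$-representation, contrary to what your last sentence insists on (and the proposition's own wording is imprecise on this point): the unipotent element with parameter $u$ sends $a_j$ to $a_j+\sum_{j<i\leq 5}\binom{i}{j}(-u)^{i-j}a_i+\binom{6}{j}(-u)^{6-j}s^2$, and no change of coordinates removes the quadratic terms, since the fixed-point scheme of the unipotent radical is cut out by the non-radical ideal $(a_1,\dots,a_5,s^2)$, whereas the fixed locus of any linear action is a linear subspace. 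What is true, and is all that the subsequent Chow-ring computations use (by Proposition 2.3 of \cite{MolVis} only the subgroup $T_2\subset B_2$ matters), is that the restriction of the $B_2$-action on $\widetilde{\AA}(6)$ to $T_2$ is linear.
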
 

\begin{proof}
	This is a straightforward generalization of Proposition 3.1 of \cite{DiLorPerVis}.We remove the $0$-section because we cannot allow non-reduced curve to appear (condition (b1) in \Cref{def:hyp-A_r}). 
\end{proof}

 The representation $\widetilde{\AA}(6)$ can be described as follows: consider the $\GL_2$-representation $\AA(6)$ of binary forms with coordinates $(x_0,x_1)$ of degree $6$ with an action described by the formula
$$ A.f(x_0,x_1):=\det(A)^{2}f(A^{-1}x),$$  
and consider also the $1$-dimensional $B_2$-representation $\AA^1$ with an action described by the formula 
$$ A.s:=\det(A)a_{22}^{-3}s$$
where
$$
A:=\begin{pmatrix}
	a_{11} & a_{12} \\
	0 & a_{22}
\end{pmatrix}.
$$

We define $\widetilde{\AA}(6)$ to be the invariant subscheme  of $\AA(6)\times \AA^1$ defined by the pairs $(f,s)$ that satisfy the equation $f(0,1)=s^2$. If we use $s$ instead of the coefficient of $x_1^6$ in $f$ as a coordinate, it is clear that $\widetilde{\AA}(6)$ is isomorphic to a $7$-dimensional $B_2$-representation.

The previous proposition gives us the following description of the Chow ring of $\Mtilde_{2,1}\setminus \ThTilde_1$.

\begin{corollary}\label{cor:mtilde_21}
	$\Mtilde_{2,1}\setminus \ThTilde_1$ is the quotient by $B_2$ of the complement of the subrepresentation of $\widetilde{\AA}(6)$ described by the equations $s=a_5=a_4=a_3=0$ where $a_i$ is the coefficient of $x_0^{6-i}x_1^i$.
\end{corollary}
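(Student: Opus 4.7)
The plan is to combine the description of $\Ctilde_2 \setminus \ThTilde_1$ given in the preceding proposition with the contraction morphism of \Cref{theo:contrac}. Applied with $g=2$, $n=0$ and $r=7$, the contraction morphism identifies $\Mtilde_{2,1}$ with the open substack of $\Ctilde_2$ parametrizing pairs $(C,q)$ such that $q$ is an $A_h$-singularity of $C$ for some $h\leq 2$. Restricting to the open complement of $\ThTilde_1$ on both sides, the problem reduces to identifying the open substack of $[\widetilde{\AA}(6)\setminus 0/B_2]$ whose points correspond to curves whose section-singularity is at most $A_2$.

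Unwinding the parametrization behind the previous proposition, a point of $\widetilde{\AA}(6)$ encodes a binary sextic $f=\sum_{i=0}^{6}a_i x_0^{6-i}x_1^i$ together with a square root $s$ of $a_6=f(0,1)$; the associated hyperelliptic curve is, locally above the $B_2$-fixed point $[0:1]\in\PP^1$, the double cover $z^2=f(x_0,x_1)$, and the universal section is the preimage $z=s$ of $[0:1]$. The next step is to compute the local equation of this curve at the section: in the chart $x_1=1$, shifting $z\mapsto z+s$ and using $s^2=a_6$, the curve is cut out by
\begin{equation*}
z^2+2sz-\bigl(a_5 x_0+a_4 x_0^2+a_3 x_0^3+a_2 x_0^4+a_1 x_0^5+a_0 x_0^6\bigr)=0.
\end{equation*}
If $s\neq 0$, the partial derivative in $z$ at the origin is $2s\neq 0$, so the section is a smooth point of the curve. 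If $s=0$, the equation becomes $z^2-g(x_0)$ with $g(x_0)=a_5 x_0+a_4 x_0^2+\ldots$, and writing $g(x_0)=x_0^k\cdot u(x_0)$ with $u$ a unit and $k$ the vanishing order of $g$ at the origin, a change of variables (valid because $2$ is invertible in $\kappa$) identifies the singularity as $A_{k-1}$. Combining the two cases, the section has type $A_h$ with $h\leq 2$ if and only if the point lies off the closed subscheme $\{s=a_5=a_4=a_3=0\}$.

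This subscheme is automatically $B_2$-invariant because the singularity type at the universal section is an intrinsic invariant, and it already contains the origin of $\widetilde{\AA}(6)$ (since $s=0$ forces $a_6=s^2=0$), so the removal of the $0$-section from the previous proposition becomes automatic. The main obstacle is the bookkeeping in the local computation: one has to pin down that the coordinate $s$ corresponds precisely to the chosen sheet of the double cover, so that the shift $z\mapsto z+s$ is geometrically meaningful as a translation of the universal section to the origin, and to verify that the linear term $2sz$ does not interfere with the classification of $z^2-g(x_0)$ singularities once $s=0$. Once these points are addressed, the remaining analysis is the standard classification of $A_n$-singularities on plane curves of the form $z^2-x_0^k$.
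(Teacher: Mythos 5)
Your proof is correct and takes essentially the same approach as the paper: both reduce via the contraction morphism (\Cref{theo:contrac}) to identifying, inside the presentation $\Ctilde_2\setminus\ThTilde_1\simeq[\widetilde{\AA}(6)\setminus 0/B_2]$, the closed locus where the marked section is an $A_n$-singularity with $n\geq 3$, and both find this locus to be $\{s=a_5=a_4=a_3=0\}$. The only difference is one of detail: you carry out the completing-the-square local computation explicitly (and note that the locus contains the origin, so removing the zero section is automatic), whereas the paper simply asserts that the section is an $A_n$-singularity with $n\geq 3$ if and only if $\infty$ is a root of $f$ of multiplicity at least $4$.
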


\begin{proof}
	Theorem 2.5 of \cite{Per1} gives that $\Mtilde_{2,1}\setminus \ThTilde_1$ is isomorphic to the open of $\Ctilde_2 \setminus \ThTilde_1$ parametrizing pairs $(C/S,p)$ such that $C$ is a $A_5$-stable curve of genus $2$ and $p$ is a section whose geometric fibers over $S$ are $A_n$-singularity for $n\leq 2$. In particular, we need to describe the closed invariant subscheme $D_3$ of $\widetilde{\AA}(6)$ that parametrizes pairs $(C,p)$ such that $p$ is a $A_n$-singularities with $n\geq 3$. 
	
	To do so, we need to explicit the isomorphism 
	$$ \Ctilde_2 \setminus \ThTilde_1 \simeq [\widetilde{\AA}(6)\setminus 0/B_2]$$
	as described in Proposition 3.1 of \cite{DiLorPerVis}. Given a pair $(f,s) \in \widetilde{\AA}(6)\setminus 0$, we construct a curve $C$ as $\spec_{\PP^1}(\cA)$ where $\cA$ is the finite locally free algebra of rank two over $\PP^1$ defined as $\cO_{\PP^1}\oplus \cO_{\PP^1}(-3)$. The multiplication is induced by the section $f:\cO_{\PP^1}(-6)\into \cO_{\PP^1}$. The section $p$ of $C$ can be defined seeing $s$ as a section of  $\H^0(\cO_{\PP^1}(-3)\vert_{\infty})$, where $\infty \in \PP^1$. This implies that the point $p$ is a $A_n$-singularity for $n\geq 3$ if and only if $\infty$ is a root of multiplicity at least $4$ for the section $f$.  The statement follows.  
\end{proof}
\Cref{cor:mtilde_21} gives us the description of the Chow ring of $\Detilde_{1}\setminus \Detilde_{1,1}$. We denote by $t$ the generator of the Chow ring of $\Mtilde_{1,1}$ defined as $t:=c_1(p^*\cO(p))$ for every object $(C,p)\in \Mtilde_{1,1}$. This is the $\psi$-class of $\Mtilde_{1,1}$.

\begin{proposition}\label{prop:descr-detilde-1}
	We have the following isomorphism
	$$ \ch(\Detilde_1 \setminus \Detilde_{1,1}) \simeq \ZZ[1/6,t_0,t_1,t]/(f)$$
	where $f \in \ZZ[1/6,t_0,t_1]$ is the polynomial:
	$$f=2t_1(t_0+t_1)(t_0-2t_1)(t_0-3t_1).$$
\end{proposition}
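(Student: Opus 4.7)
The strategy is to combine the gluing morphism $\Mtilde_{1,1}\times\Mtilde_{2,1}\to\Detilde_1$, which restricts to an isomorphism $\Mtilde_{1,1}\times(\Mtilde_{2,1}\setminus\ThTilde_1)\simeq\Detilde_1\setminus\Detilde_{1,1}$, with the explicit affine description of $\Mtilde_{2,1}\setminus\ThTilde_1$ provided by \Cref{cor:mtilde_21}. Since the Chow ring of $\Mtilde_{1,1}$ with $\ZZ[1/6]$-coefficients is the polynomial ring $\ZZ[1/6][t]$ on the $\psi$-class (coming from its presentation as a quotient of affine space by $\gm$), the K\"unneth-type isomorphism for products with a classifying stack of $\gm$ yields $\ch(\Detilde_1\setminus\Detilde_{1,1})\simeq\ch(\Mtilde_{2,1}\setminus\ThTilde_1)[t]$. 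Thus the task reduces to computing $\ch(\Mtilde_{2,1}\setminus\ThTilde_1)$.

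By \Cref{cor:mtilde_21} this stack equals $[(\widetilde{\AA}(6)\setminus D_3)/B_2]$, where $D_3\subset\widetilde{\AA}(6)$ is the codimension-four linear $B_2$-subrepresentation cut out by $s=a_3=a_4=a_5=0$. Since $\widetilde{\AA}(6)$ is a representation of $B_2$, i.e.\ a vector bundle over $\cB B_2$, we have $\ch([\widetilde{\AA}(6)/B_2])\simeq\ch(\cB B_2)\simeq\ZZ[1/6,t_0,t_1]$, where $t_0,t_1$ are the first Chern classes of the two diagonal characters. Applying the excision sequence together with the self-intersection formula for the regular closed immersion $[D_3/B_2]\hookrightarrow[\widetilde{\AA}(6)/B_2]$ of codimension $4$ gives
$$\ch(\Mtilde_{2,1}\setminus\ThTilde_1)\simeq\ZZ[1/6,t_0,t_1]/(c_4(N)),$$
where $N=\widetilde{\AA}(6)/D_3$ is the normal representation, which splits as a direct sum of four characters indexed by the quotient classes of $s,a_3,a_4,a_5$.

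It remains to compute $c_4(N)$. Unwinding the formulas $A.f=\det(A)^2f(A^{-1}x)$ on $\AA(6)$ and $A.s=\det(A)a_{22}^{-3}s$ on the auxiliary coordinate, the weights of the four summands come out to be $t_0-2t_1$ (from $s$), $-t_0-t_1$ (from $a_3$), $-2t_1$ (from $a_4$), and $t_0-3t_1$ (from $a_5$). Multiplying these four linear classes produces
$$c_4(N)=(t_0-2t_1)(-t_0-t_1)(-2t_1)(t_0-3t_1)=2t_1(t_0+t_1)(t_0-2t_1)(t_0-3t_1)=f,$$
as required. The main obstacle is simply the careful bookkeeping of the $B_2$-weights, in particular keeping track of the determinantal twist in the $\GL_2$-action on binary sextics and the extra $a_{22}^{-3}$ factor on $s$; once this is settled, the factorization of the top Chern class into linear terms is immediate from the decomposition of $N$ into characters, and no further equivariant Chern class manipulations are needed.
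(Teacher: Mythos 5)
Your proof is correct and takes essentially the same approach as the paper: reduce to $\Mtilde_{2,1}\setminus\ThTilde_1$ via the product decomposition with $\Mtilde_{1,1}$, invoke \Cref{cor:mtilde_21} to realize this stack as the $B_2$-quotient of the complement of the linear subrepresentation $\{s=a_3=a_4=a_5=0\}$, pass to the maximal torus, and identify $f$ with the fundamental class of that subrepresentation. The paper's proof dismisses the weight computation as ``straightforward''; your explicit bookkeeping, giving $(t_0-2t_1)(-t_0-t_1)(-2t_1)(t_0-3t_1)=2t_1(t_0+t_1)(t_0-2t_1)(t_0-3t_1)$, is precisely that omitted step and is consistent with the paper's conventions (e.g.\ it recovers $[H]\vert_{\Detilde_1\setminus\Detilde_{1,1}}=t_0-2t_1$ as the class of $\{s=0\}$).
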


\begin{proof}
	Because $\Mtilde_{1,1}$ is a vector bundle over $\cB \gm$, the morphism 
	$$ \ch(\Mtilde_{2,1}\setminus \ThTilde_1)\otimes \ch(\cB\gm) \rightarrow \ch(\Detilde_1 \setminus \Detilde_{1,1})$$
	is an isomorphism. Therefore it is enough to describe the Chow ring of $\Mtilde_{2,1}\setminus \ThTilde_1$. The previous corollary gives us that if $T_2$ is the maximal torus of $B_2$ and $t_0,t_1$ are the two generators for the character group of $T_2$, we have that 
	$$\ch(\Mtilde_{2,1}\setminus \ThTilde_1)\simeq \ZZ[1/6,t_0,t_1]/(f)$$
	where $f$ is the fundamental class associated with the vanishing of the coordinates $s,a_5,a_4,a_3$ of $\widetilde{\AA}(6)$. Again, we are using Proposition 2.3 of \cite{MolVis}.  This computation in the $\gm^2$-equivariant setting is straightforward and it gives us the result.
\end{proof}

Now, we use the results in Appendix A of \cite{DiLorVis} to describe the first Chern of the normal bundle of $\Detilde_1 \setminus \Detilde_{1,1}$ in $\Mtilde_3 \setminus \Detilde_{1,1}$.

\begin{proposition}\label{prop:relation-detilde-1}
	The closed immersion $\Detilde_1\setminus \Detilde_{1,1} \into \Mtilde_3 \setminus \Detilde_{1,1}$ is regular and the first Chern class of the normal bundle is of equal to $t+t_1$. Moreover, we have the following equalities in the Chow ring of $\Detilde_{1}\setminus \Detilde_{1,1}$:
	\begin{itemize}
		\item[(1)] $\lambda_1=-t-t_0-t_1$,
		\item[(2)] $\lambda_2=t_0t_1+t(t_0+t_1)$,
		\item[(3)] $\lambda_3=-t_0t_1t$,
		\item[(4)] $[H]\vert_{\Detilde_{1}\setminus \Detilde_{1,1}}=t_0-2t_1$.
	\end{itemize} 
\end{proposition}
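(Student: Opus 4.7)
The plan is to use the factorization $\Detilde_1\setminus\Detilde_{1,1}\simeq \Mtilde_{1,1}\times(\Mtilde_{2,1}\setminus\ThTilde_1)$ established just before the statement and to compute both the Hodge classes and the normal bundle on each factor from the explicit quotient-stack presentations already available.

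I will first address the Hodge classes~(1)--(3). The normalization exact sequence at the separating node combined with Grothendieck duality on the nodal curve yields a canonical splitting
$$\HH_3\big|_{\Detilde_1\setminus\Detilde_{1,1}}\simeq p_1^*\HH_1 \oplus p_2^*\HH_2$$
where $\HH_1$, $\HH_2$ are the Hodge bundles of the two factors. On $\Mtilde_{1,1}$, evaluation at the marked point identifies $\HH_1$ with the cotangent line $e^*\Omega_E$, so $c_1(\HH_1)=-t$. On $\Mtilde_{2,1}\setminus\ThTilde_1$, I will compute $\HH_2$ from the double-cover presentation $\pi\colon F\to \PP^1$ of \Cref{cor:mtilde_21}, with $\pi_*\cO_F=\cO\oplus\cL$ where $\cL=\cO_{\PP^1}(-3)\otimes\det V^{-1}$ (the $\det V^{-1}$ twist arising from the $\det(A)^2$ factor in the $B_2$-action on binary sextics). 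Using $\pi_*\omega_{F/\PP^1}=\cO\oplus\cL^{-1}$, twisting by $\omega_{\PP^1/S}=\cO(-2)\otimes\det V^{-1}$, and pushing down to the base gives $\HH_2\simeq V^{\vee}$ as a $B_2$-bundle with Chern roots $-t_0,-t_1$. Identities (1)--(3) then follow by multiplying total Chern classes.

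For the normal bundle, the standard deformation theory at a separating node (see Appendix~A of \cite{DiLorVis}) shows that the closed immersion is regular and identifies the normal bundle with $p_1^*T_eE\otimes p_2^*T_pF$. By the definition of $t$ one has $c_1(T_eE)=t$, so it suffices to compute $c_1(T_pF)$ on $\Mtilde_{2,1}\setminus\ThTilde_1$. The delicate point is that although $\pi$ is étale at a generic $p$, the global identification $\omega_{F/\PP^1}\simeq \pi^*\cL^{-1}$ carries a non-trivial equivariant contribution: restricting $\omega_{F/S}=\omega_{F/\PP^1}\otimes\pi^*\omega_{\PP^1/S}$ to $p$ and tracking the $B_2$-characters of both $\cL$ and $\omega_{\PP^1/S}$ at the fixed point of $B_2$ on $\PP^1$ gives $c_1(p^*\omega_{F/S})=-t_1$, hence $c_1(T_pF)=t_1$ and $c_1(N)=t+t_1$.

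Finally, for (4), a reducible genus-$3$ $A_r$-stable curve with a separating node is hyperelliptic precisely when its hyperelliptic involution fixes the node, which forces the marked point $p$ on the genus-$2$ component to be a Weierstrass point. In the double-cover presentation this is exactly the condition $s=0$, so $[H]|_{\Detilde_1\setminus\Detilde_{1,1}}$ is the pullback from the second factor of the class of the hyperplane $\{s=0\}$; this class equals the $B_2$-weight of the one-dimensional subrepresentation spanned by $s$, which from the action formula $A.s=\det(A)a_{22}^{-3}s$ reads as $t_0-2t_1$. The principal obstacle throughout will be the consistent bookkeeping of all equivariant twists -- especially the $\det V^{-1}$ twist in $\cL$ inherited from the $\det(A)^2$ factor in the $\GL_2$-action on binary sextics -- since without it both the Hodge bundle formulas and the normal bundle computation come out shifted.
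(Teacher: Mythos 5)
Your proposal is correct and follows essentially the same route as the paper: the product decomposition $\Mtilde_{1,1}\times(\Mtilde_{2,1}\setminus\ThTilde_1)$, the splitting of the Hodge bundle at the separating node (the paper's \Cref{lem:hodge-sep}), the identification of the normal bundle via Appendix A of \cite{DiLorVis} as the product of the two tangent lines at the glued points, the $B_2$-equivariant computation on the double-cover presentation of \Cref{cor:mtilde_21} giving $\HH_2\simeq V^{\vee}$ and $c_1(T_{p_0}C_0)=t_1$, and the Weierstrass-point interpretation $\{s=0\}$ for the hyperelliptic restriction. Your explicit tracking of the $\det$-twists (in $\cL=\cO(-3)\otimes\det V^{-1}$ and $\omega_{\PP^1/S}$) merely spells out what the paper leaves as ``a straightforward computation,'' and it comes out right.
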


\begin{proof}
The closed immersion is regular because the two stacks are smooth.  Because of Appendix A of \cite{DiLorVis}, we know that the normal bundle is the determinant of a vector bundle $N$ of rank $2$ tensored by a $2$-torsion line bundle whose first Chern class vanishes when we invert $2$ in the Chow ring. Moreover, we can describe $N$ in the following way. Suppose $(C/S,p)$ is an object of $(\Detilde_{1}\setminus \Detilde_{1,1})(S)$, where $p$ is the section whose geometric fibers over $S$ are separating nodes. Then $N$ is the normal bundle of $p$ inside $C$, which has rank $2$. If we decompose $C$ as a union of an elliptic curve $(E,e)\in \Mtilde_{1,1}$ and a $1$-pointed genus $2$ curve $(C_0,p_0) \in \Mtilde_{2,1}\setminus ThTilde_1$, it is clear that $N=N_{e/E}\oplus N_{p_0/C_0}$. Therefore it is enough to compute the two line bundles separately.
	
  The first Chern class of the line bundle $N_{e/E}$ is exactly the $\psi$-class of $\Mtilde_{1,1}$, which is $t$ by definition. Similarly, the line bundle $N_{p_0/C_0}$ is the $\psi$-class of $\Mtilde_{2,1}$. Using the description of $\Ctilde_2$ as a quotient stack (see the proof of \Cref{cor:mtilde_21}), one can prove that 
	  \begin{itemize}
	  	\item $N_{p_0/C_0}=\cO_{\PP^1}(1)^{\vee}\vert_{p_{\infty}}=E_1$, where $E_1$ is the character of $B_2$ whose Chern class is $t_1$;
	  	\item $\pi_*\omega_{C_0}=E_0^{\vee}\oplus E_1^{\vee}$, where $E_0$ is the character of $B_2$ whose Chern class is $t_0$.
	\end{itemize}

We now prove the description of the fundamental class of the hyperelliptic locus. We have that $\Htilde_3$ intersects transversely $\Detilde_1\setminus \Detilde_{1,1}$ in the locus where the section of $\Mtilde_{2,1}$ is a Weierstrass point for the (unique) involution of a genus $2$ curve. The description of $\Ctilde_2 \setminus \ThTilde_1$ as a quotient stack (see the proof of \Cref{cor:mtilde_21}) implies that $[H]\vert_{\Detilde_1\setminus \Detilde_{1,1}}$ is equal to the class of the vanishing locus $s=0$ in $\widetilde{\AA}(6)$, which is easily computed as we know explicitly the action of $B_2$. The only thing that remains to do is the descriptions of the $\lambda$-classes. They follow from the following lemma.
\end{proof}

\begin{lemma}\label{lem:hodge-sep}
	Let $C/S$ be an $A_r$-stable curve over a base scheme $S$ and $p$ a separating node (i.e. a section whose geometric fibers over $S$ are separating nodes). Consider $b:\widetilde{C}\arr C$ the blowup of $C$ in $p$ and denote by $\widetilde{\pi}$ the composition $\pi \circ b$. Then we have $$\pi_*\omega_{C/S}\simeq \widetilde{\pi}_*\omega_{\widetilde{C}/S}.$$
\end{lemma}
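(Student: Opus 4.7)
The plan is to exhibit the short exact sequence of coherent sheaves
\[
0 \arr b_*\omega_{\widetilde{C}/S} \arr \omega_{C/S} \arr p_*\cO_S \arr 0
\]
on $C$, push it forward via $\pi$, and show that the resulting evaluation morphism $\pi_*\omega_{C/S}\to\cO_S$ vanishes. The sequence comes from the local description of $\omega_{C/S}$ near the separating node: \'etale-locally on $S$ around $p$, the curve $C$ has the form $\spec\cO_S[x,y]/(xy-t)$ with $\VV(t)=p(S)$, and $\omega_{C/S}$ is freely generated by $\eta=dx/x=-dy/y$; the submodule $b_*\omega_{\widetilde{C}/S}$ is then generated by $x\eta$ and $y\eta$, i.e.\ is the ideal-multiple $(x,y)\cdot\eta$, so the quotient is supported on $p(S)$ and is canonically isomorphic to $p_*\cO_S$.

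Pushing forward to $S$ and using that $b$ is finite (so $R^ib_*=0$ for $i>0$) together with $\pi\circ p=\id_S$ (so $\pi_*p_*\cO_S=\cO_S$ and $R^1\pi_*p_*\cO_S=0$), one obtains the long exact sequence
\[
0 \arr \widetilde{\pi}_*\omega_{\widetilde{C}/S} \arr \pi_*\omega_{C/S} \xarr{\varphi} \cO_S \arr R^1\widetilde{\pi}_*\omega_{\widetilde{C}/S} \arr R^1\pi_*\omega_{C/S} \arr 0,
\]
so the lemma reduces to proving $\varphi=0$.

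Since $\pi_*\omega_{C/S}$ is the Hodge bundle (in particular locally free) and cohomology commutes with base change, it suffices to check that $\varphi$ vanishes on every geometric fiber $s\in S$; the image of $\varphi$ is then a finitely generated submodule of $\cO_S$ with vanishing fibers, hence zero by Nakayama. On a geometric fiber $s$, the map $\varphi_s:\H^0(C_s,\omega_{C_s})\arr k(s)$ is (up to sign) the residue at either branch of the node, since the local generator $\eta$ of the quotient by $b_*\omega_{\widetilde{C_s}}$ has residue $+1$ at $p_1$ and $-1$ at $p_2$. Because $p_s$ is separating, $\widetilde{C}_s=C_1\sqcup C_2$ decomposes into two complete curves with the preimages $p_1,p_2$ lying on different components. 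Any $\omega\in\H^0(C_s,\omega_{C_s})$ pulls back to a meromorphic differential on each $C_i$ with at most a simple pole at $p_i$; the classical residue theorem on a complete curve, together with the fact that $p_i$ is the only possible pole, forces $\mathrm{Res}_{p_i}(\omega)=0$, yielding $\varphi_s=0$.

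The main subtlety is establishing the short exact sequence cleanly in families --- in particular identifying the cokernel with $p_*\cO_S$ from the local \'etale model and the flatness of $\omega_{C/S}$ over $S$. A slightly cleaner alternative is to dualize via Grothendieck--Serre duality and prove the equivalent statement $R^1\pi_*\cO_C\simeq R^1\widetilde{\pi}_*\cO_{\widetilde{C}}$: this follows from pushing forward the more transparent sequence $0\to\cO_C\to b_*\cO_{\widetilde{C}}\to p_*\cO_S\to 0$, together with the observation that the induced map $\widetilde{\pi}_*\cO_{\widetilde{C}}\to\cO_S$ (``difference of values at the two preimages'') is surjective precisely because $p$ is separating, so $\widetilde{\pi}_*\cO_{\widetilde{C}}$ has rank $2$ over $\cO_S$.
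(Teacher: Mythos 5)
Your route is genuinely different from the paper's: you use the conductor sequence $0 \arr b_*\omega_{\widetilde{C}/S} \arr \omega_{C/S} \arr p_*(p^*\omega_{C/S}) \arr 0$ and try to kill the resulting evaluation map $\varphi$ by a fiberwise residue argument, whereas the paper uses Noether's formula to embed both $\pi_*\omega_{C/S}$ and $\widetilde{\pi}_*\omega_{\widetilde{C}/S}$ into $\widetilde{\pi}_*\bigl(\omega_{\widetilde{C}/S}(D)\bigr)$ and proves both inclusions are isomorphisms by checking \emph{surjectivity} on geometric fibers, quoting Lemma 3.38 of the companion paper for the fiber statement. Your sequence and its pushforward are set up correctly, with two harmless imprecisions: the existence of the section through the nodes forces the \'etale-local model to be $xy=0$ (a nonzero smoothing parameter $t$, even a nilpotent one, is incompatible with a section through the singular point), and the cokernel is canonically $p_*p^*\omega_{C/S}$, which is identified with $p_*\cO_S$ only \'etale-locally on $S$, since the two branches (and hence the sign of the residue) can be interchanged by monodromy. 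The fiber computation is also essentially right, provided you invoke the residue theorem in its singular-curve form: $C_1$ is a connected, reduced, proper, but in general singular curve, and what is needed is that the residue map $\H^0(C_1,\omega_{C_1}(p_1)) \arr k$ vanishes; this is Rosenlicht's theory, or follows directly from the cohomology sequence, because $\H^1(C_1,\omega_{C_1}(p_1))\simeq \H^0(C_1,\cO_{C_1}(-p_1))^{\vee}=0$ while $\H^1(C_1,\omega_{C_1})\simeq k$, so the connecting map $k \arr \H^1(C_1,\omega_{C_1})$ is injective and the residue map is zero.

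The genuine gap is the step ``it suffices to check that $\varphi$ vanishes on every geometric fiber \dots hence zero by Nakayama.'' Vanishing of a morphism of coherent sheaves cannot be tested on geometric fibers over a non-reduced base, and the lemma is asserted for an arbitrary base scheme $S$ (necessarily so, since it is used to identify sheaves on stacks, whose test objects include non-reduced schemes). Concretely, for $S=\spec k[\epsilon]$ the map $\epsilon\colon \cO_S \arr \cO_S$ vanishes on the unique geometric fiber but is nonzero; the precise error is that $\varphi\otimes k(s)=0$ only gives $\im{\varphi}\subseteq \frkm_s\cO_S$, whereas Nakayama needs $\im{\varphi}\otimes k(s)=0$, a strictly stronger statement (in the example $\im{\varphi}=(\epsilon)$ and $(\epsilon)\otimes k\simeq k\neq 0$). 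This is exactly why the paper phrases its fiberwise checks as surjectivity statements: cokernels commute with arbitrary base change, so surjectivity descends from geometric fibers by Nakayama, while kernels and vanishing do not. Two clean repairs are available. First, recast the claim as surjectivity of $\widetilde{\pi}_*\omega_{\widetilde{C}/S} \arr \pi_*\omega_{C/S}$: granting base change for both Hodge bundles (which the paper obtains from Grauert applied over a smooth, hence reduced, atlas of the moduli stack), on a geometric fiber this is an injection $\H^0(C_1,\omega_{C_1})\oplus \H^0(C_2,\omega_{C_2}) \into \H^0(C_s,\omega_{C_s})$ between spaces of the same dimension $g_1+g_2=g$, hence an isomorphism --- this even bypasses the residue theorem. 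Second, keep $\varphi$ but reduce to the universal family over a smooth atlas of the stack of $A_r$-stable curves with a marked separating node; such an atlas is reduced, and over a reduced base a map of locally free sheaves vanishing at every point is zero, after which the general case follows by pullback using base change. Your closing alternative via $0\arr \cO_C \arr b_*\cO_{\widetilde{C}} \arr p_*(\,\cdot\,)\arr 0$ is structurally sounder for precisely this reason --- its key step is a surjectivity --- but it still needs the base-change input for $\widetilde{\pi}_*\cO_{\widetilde{C}}$ from the smooth-atlas argument, and the ``difference of values at the two preimages'' map is again only defined \'etale-locally on $S$.
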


\begin{proof}
	Denote by $D$ the dual of the conductor ideal. We know by the Noether formula that $b^*\omega_{C/S}=\omega_{\widetilde{C}/S}(D)$, therefore if we tensor the exact sequence 
	$$0 \rightarrow \cO_C \rightarrow f_*\cO_{\widetilde{C}} \rightarrow Q \rightarrow 0$$ 
	by $\omega_{C/S}$, we get the following injective morphism 
	$$ \omega_{C/S} \into f_*\omega_{\widetilde{C}/S}(D). $$
	As usual, the smoothness of $\Mtilde_g^r$ implies that we can apply Grauert's theorem to prove that the line bundles $\omega_{\widetilde{C}}$ and $\omega_{\widetilde{C}}(D)$ satisfy base change over $S$. Consider now the morphism on global sections
	$$ \pi_*\omega_{C/S} \into\widetilde{\pi}_*\omega_{\widetilde{C}/S}(D),$$
	because they both satisfy base change, we can prove the surjectivity restricting to the geometric points of $S$. The statement for algebraically closed fields has already been proved in Lemma 3.38 of \cite{Per2}. 
	
	Finally, given the morphism 
	$$ \omega_{\widetilde{C}} \into \omega_{\widetilde{C}}(D) $$ 
	we consider the global sections 
	$$ \widetilde{\pi}_*\omega_{\widetilde{C}/S} \into \widetilde{\pi}_*\omega_{\widetilde{C}/S}(D)$$ 
	and again the surjectivity follows restricting to the geometric fibers over $S$. 
\end{proof}

\section{Description of $\Detilde_{1,1} \setminus \Detilde_{1,1,1}$}\label{sec:5}

Let us consider the following morphism 
$$ \Mtilde_{1,2} \times \Mtilde_{1,1} \times \Mtilde_{1,1} \longrightarrow \Detilde_{1,1}$$ 
defined by the association $$(C,p_1,p_2),(E_1,e_1),(E_2,e_2) \mapsto E_1 \cup_{e_1\equiv p_1} C \cup_{p_2 \equiv e_2} E_2$$
where the cup symbol represent the gluing of the two curves using the two points specified. The operation is obviously non commutative.

The preimage of $\Detilde_{1,1,1}$ through the morphism is the product $\Mtilde_{1,1}\times \Mtilde_{1,1}\times \Mtilde_{1,1}$ where $\Mtilde_{1,1}\subset \Mtilde_{1,2}$ is the universal section of the natural functor $\Mtilde_{1,2}\rightarrow \Mtilde_{1,1}$.

\begin{lemma}\label{lem:detilde-1-1}
	The morphism 
	$$ \pi_2:(\Mtilde_{1,2}\setminus \Mtilde_{1,1}) \times \Mtilde_{1,1} \times \Mtilde_{1,1} \longrightarrow \Detilde_{1,1}\setminus \Detilde_{1,1,1}$$ 
	described above is a $C_2$-torsor.
\end{lemma}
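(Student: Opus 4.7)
The plan is to realize $\pi_2$ as the quotient by the obvious $C_2$-symmetry swapping the two elliptic tails, and to build an inverse by normalizing a family at its separating nodes. Let $\sigma$ be the involution of
$$X:=(\Mtilde_{1,2}\setminus \Mtilde_{1,1})\times \Mtilde_{1,1}\times \Mtilde_{1,1}$$
that simultaneously swaps $p_1 \leftrightarrow p_2$ in the first factor and exchanges the two $\Mtilde_{1,1}$-factors. The gluing $(C,p_1,p_2),(E_1,e_1),(E_2,e_2)\mapsto E_1\cup_{e_1\sim p_1}C\cup_{p_2\sim e_2}E_2$ is manifestly symmetric under $\sigma$, so one obtains a canonical $2$-isomorphism $\pi_2\circ\sigma\simeq\pi_2$ and a factorization
$$\bar{\pi}_2\colon [X/C_2]\longrightarrow \Detilde_{1,1}\setminus\Detilde_{1,1,1};$$
it suffices to prove $\bar{\pi}_2$ is an isomorphism.

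For essential surjectivity, take a geometric point $D$ of $\Detilde_{1,1}\setminus\Detilde_{1,1,1}$: this is an $A_r$-stable genus-$3$ curve with exactly two separating nodes $n_1,n_2$. Partially normalizing at $\{n_1,n_2\}$ produces three connected components, and \Cref{rem:genus-count} together with the absence of a third separating node forces the decomposition to consist of two genus-$1$ tails $E_1,E_2$ each carrying a single marked preimage, joined by a middle piece $C$ of genus $1$ with two marked preimages $p_1,p_2$. The hypothesis $D\notin\Detilde_{1,1,1}$ translates precisely into the condition $(C,p_1,p_2)\notin\Mtilde_{1,1}\subset\Mtilde_{1,2}$, since the universal section parametrizes middle pieces consisting of an elliptic curve with a rational bridge carrying both marked points --- and gluing two elliptic tails on such a bridge would create a third separating node. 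The only ambiguity in recovering $(C,p_1,p_2,E_1,E_2)$ from $D$ is the labeling of the pair $\{n_1,n_2\}$, matching the $C_2$-symmetry built into $\sigma$.

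To upgrade this to a bijection in families, we must show the action map $C_2\times X\to X\times_Y X$ is an isomorphism, where $Y:=\Detilde_{1,1}\setminus\Detilde_{1,1,1}$. The key observation is that the locus of separating nodes inside the universal curve over $Y$ cuts out a closed substack $\Sigma\to Y$ which is finite étale of degree $2$: it is finite because each geometric fibre of $Y$ has exactly two separating nodes, and étale because $Y$ is smooth and we have excised $\Detilde_{1,1,1}$, so the two separating-node sections deform independently. An $S$-point of $X$ mapping to a family $D/S\in Y(S)$ is then equivalent to a trivialization (ordering) of $\Sigma\times_Y S\to S$, and the free transitive $C_2$-action on such orderings is exactly $\sigma$. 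The main technical step, which I expect to be the only real obstacle, is to check that the partial normalization along an ordered pair of disjoint separating-node sections is flat and compatible with arbitrary base change, so that the three connected components give rise to a well-defined $S$-point of $X$; this follows from smoothness of $Y$ together with the standard fact that simultaneous normalization at sections of ordinary double points is étale-local and base-change stable.
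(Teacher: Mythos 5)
Your proof follows essentially the same route as the paper's: describe $\Detilde_{1,1}\setminus\Detilde_{1,1,1}$ as parametrizing curves together with the degree-$2$ \'etale divisor of their separating nodes, invert the gluing map by partial normalization, and identify the residual ambiguity with the $C_2$-swap of the two tails. Your analysis of geometric points is correct, with one small misattribution: what forces all three pieces of the normalization to have genus $1$ (rather than, say, $0+1+2$) is ampleness of the dualizing sheaf --- a genus-$0$ tail, or a genus-$0$ middle component with only two special points, violates stability --- not \Cref{rem:genus-count} plus the absence of a third separating node; the genus count only says the genera sum to $3$. Your identification of $\Mtilde_{1,1}\subset\Mtilde_{1,2}$ with the rational-bridge locus, and hence of the condition ``no third separating node'', is right.

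The families part, however, has two genuine gaps, located exactly where you expect ``the only real obstacle''. First, you invoke smoothness of $\Detilde_{1,1}\setminus\Detilde_{1,1,1}$ twice (for \'etaleness of your $\Sigma$, the locus of separating nodes in the universal curve, and for base-change stability of partial normalization); but in the paper this smoothness is a \emph{corollary} of \Cref{lem:detilde-1-1} together with \Cref{lem:mtilde-12}, and it has no independent proof at this stage ($\Detilde_{1,1}$ itself is singular), so your argument is circular as written. It is also insufficient: a finite morphism onto a smooth base need not be \'etale; what one actually needs is that $\Sigma$ is closed in the universal curve (cf.\ Lemma 4.7 of \cite{Per1}), finite and unramified over the base, with constant geometric fibre count over a reduced base --- none of which follows from smoothness of the target. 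Second, the assertion that partial normalization along an ordered pair of separating-node sections is well defined, commutes with \emph{arbitrary} base change, and is inverse to gluing is not a ``standard fact'': normalization does not commute with base change in general, and over a non-reduced base $S$ one cannot even speak of normalizing. What makes it work is a functorial description of the closed substack $\Detilde_{1,1}$ guaranteeing that each of its families is, \'etale-locally around the separating nodes, pulled back from the constant nodal model $xy=0$; this is precisely what the paper imports from Propositions 3.13 and 3.16 of \cite{Per1}. Your proof becomes complete once those results (or a direct proof of this family statement) are supplied in place of the appeal to smoothness.
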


\begin{proof}
	The stack $\Detilde_{1,1} \setminus \Detilde_{1,1,1}$ parametrizes pairs  $(C,E)$ where $C$ is a $A_7$-stable curve of genus $3$ and $E$ is a finite \'etale divisor of degree $2$ of $C$ whose support the disjoint union of two separating nodes. Proposition 3.13 and Proposition 3.16 of \cite{Per1} gives us that $(C,E)$ is completely determined by the partial normalization $b:\widetilde{C} \rightarrow C$ of $C$ in $E$ and by the fiber $b^{-1}(E)$. It follows easily that the action of $C_2$ on $(\Mtilde_{1,2}\setminus \Mtilde_{1,1}) \times \Mtilde_{1,1} \times \Mtilde_{1,1}$ induced by the involution 
	$$ \Big((C,p_1,p_2),(E_1,e_1),(E_2,e_2)\Big) \mapsto \Big((C,p_2,p_1),(E_2,e_2),(E_1,e_1)\Big)$$ 
    induces a structure of $C_2$-torsor on the morphism $\pi_2$.
\end{proof}

Denote by $\Ctilde_{1,1}$ the universal curve over $\Mtilde_{1,1}$. It naturally comes with a universal section $\Mtilde_{1,1}\into \Ctilde_{1,1}$.

\begin{lemma}\label{lem:mtilde-12}
	We have the isomorphism 
	$$ \Mtilde_{1,2}\setminus\Mtilde_{1,1} \simeq \Ctilde_{1,1}\setminus \Mtilde_{1,1}$$
	and therefore we have the following isomorphism of rings
	$$ \ch( \Mtilde_{1,2}\setminus\Mtilde_{1,1})\simeq \ZZ[1/6,t]. $$  
\end{lemma}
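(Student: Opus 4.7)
My plan is to realize both sides of the first isomorphism as open substacks of $\Ctilde_{1,1}$ via the contraction morphism, and then to compute the Chow ring using the Weierstrass presentation of $\Mtilde_{1,1}$.

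First, I would apply \Cref{theo:contrac} with $g=1$, $n=1$ to get an open immersion $\gamma\colon \Mtilde_{1,2}\hookrightarrow \Ctilde_{1,1}$ whose image is $\Ctilde_{1,1}^{\leq 2}$. The key intermediate claim is that $\Ctilde_{1,1}^{\leq 2}=\Ctilde_{1,1}$, which amounts to showing that the fibers of $\Ctilde_{1,1}\to \Mtilde_{1,1}$ admit no $A_h$-singularities with $h\geq 3$. By \Cref{rem: max-sing} it suffices to treat $\Mtilde_{1,1}^{3}$. An $A_3$-singularity on a one-pointed genus $1$ curve would, by the genus formula of \Cref{rem:genus-count}, force the partial normalization at that point to be disconnected of total arithmetic genus $0$ (the connected case would give $g(\widetilde C)=-1$, which is impossible). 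This decomposes $C$ as two arithmetic-genus-$0$ subcurves meeting in a tacnode with conductor of length $2$ on each branch. The adjunction relation $b^{*}\omega_{C}\simeq \omega_{\widetilde{C}}(J_b^{\vee})$ then gives $\deg \omega_{C}|_{\Gamma_i}=0$ on each branch; since the marked point lies on only one of them, $\omega_C(p_1)$ restricted to the other branch has degree $0$, contradicting the ampleness requirement in the definition of $A_r$-stability.

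Second, I would identify the closed substack $\Mtilde_{1,1}\subset \Mtilde_{1,2}$ (the universal section of the forgetful morphism) with the image under $\gamma$ of the universal section $\Mtilde_{1,1}\hookrightarrow \Ctilde_{1,1}$. Indeed, a point of $\Mtilde_{1,1}\subset \Mtilde_{1,2}$ corresponds to a $2$-pointed $A_r$-stable curve obtained by sprouting a rational tail at $p_1$ carrying both markings; the contraction $\gamma$ collapses this tail and places the new section at the original marked point. Combining the two steps yields
$$\Mtilde_{1,2}\setminus \Mtilde_{1,1}\;\simeq\;\Ctilde_{1,1}\setminus \Mtilde_{1,1}.$$

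Finally, I would compute the Chow ring using the Weierstrass presentation $\Mtilde_{1,1}\simeq [\spec\kappa[a_4,a_6]/\gm]$ with weights $(4,6)$ recalled in the remark following \Cref{lem:gluing}. Under this presentation $\Ctilde_{1,1}$ is cut out inside the weighted projective bundle $[\PP(2,3,1)/\gm]$ by $y^{2}z=x^{3}+a_{4}xz^{2}+a_{6}z^{3}$, with universal section at $[0:1:0]$. The complement $\Ctilde_{1,1}\setminus \Mtilde_{1,1}$ is therefore the affine chart $z\neq 0$, namely $[V/\gm]$ for the hypersurface $V=\{y^{2}=x^{3}+a_{4}x+a_{6}\}\subset \AA^{4}_{x,y,a_4,a_6}$ with $\gm$-weights $(2,3,4,6)$. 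Solving for $a_{6}$ identifies $V\simeq \AA^{3}_{x,y,a_4}$ with $\gm$-weights $(2,3,4)$, which is a rank-$3$ linear vector bundle over $\cB\gm$. Its Chow ring is therefore $\ch(\cB\gm)=\ZZ[1/6,t]$, where $t$ is the pullback of the $\psi$-class from $\Mtilde_{1,1}$ (up to an invertible scalar coming from the weight of the tangent line at infinity).

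The only substantive step is the verification that $\Ctilde_{1,1}^{\leq 2}=\Ctilde_{1,1}$; the remaining ingredients are a direct application of \Cref{theo:contrac} together with the explicit Weierstrass model for $\Mtilde_{1,1}$.
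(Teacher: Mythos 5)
Your proposal is correct and follows essentially the same route as the paper: the isomorphism is deduced from the contraction morphism (\Cref{theo:contrac}, i.e.\ Theorem 2.5 of \cite{Per1}), and the Chow ring is computed via the Weierstrass presentation, which is exactly Lemma 3.2 of \cite{DiLorPerVis} that the paper cites. The only difference is that you spell out the details the paper delegates to those references --- in particular the verification that one-pointed genus-$1$ $A_r$-stable curves admit no tacnodes (so $\Ctilde_{1,1}^{\leq 2}=\Ctilde_{1,1}$), which is precisely the point the paper records in the remark following the lemma.
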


\begin{proof}
 The isomorphism is a corollary of Theorem 2.5 of \cite{Per1}. The computation of the Chow ring of $\Ctilde_{1,1}\setminus \Mtilde_{1,1}$  is Lemma 3.2 of \cite{DiLorPerVis}.
\end{proof}

\begin{remark}
	It is important to notice that $\Mtilde_{1,2}$ has at most $A_3$-singularities while $\Ctilde_{1,1}$ has at most cusps, because it is the universal curve of $\Mtilde_{1,1}$.
\end{remark}
\begin{corollary}
	The algebraic stack $\Detilde_{1,1}\setminus \Detilde_{1,1,1}$ is smooth.
\end{corollary}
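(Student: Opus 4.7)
The plan is to read off smoothness directly from the $C_2$-torsor structure supplied by Lemma~\ref{lem:detilde-1-1}. Since smoothness is étale local on the source (and in particular descends along étale surjections), and a $C_2$-torsor in characteristics different from $2$ is étale, it suffices to prove that the source
$$(\Mtilde_{1,2}\setminus\Mtilde_{1,1}) \times \Mtilde_{1,1} \times \Mtilde_{1,1}$$
is smooth over $\kappa$.

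First I would note that $\Mtilde_{1,1}=\Mtilde_{1,1}^{3}$ is smooth by Theorem~\ref{theo:descr-quot}, so the two right-hand factors are smooth. For the remaining factor, Lemma~\ref{lem:mtilde-12} provides an isomorphism $\Mtilde_{1,2}\setminus\Mtilde_{1,1}\simeq \Ctilde_{1,1}\setminus\Mtilde_{1,1}$, so it is enough to argue that the universal curve $\Ctilde_{1,1}$ (of the maximal stack $\Mtilde_{1,1}=\Mtilde_{1,1}^{3}$) is smooth as an algebraic stack. By Theorem~\ref{theo:contrac} the open substack $\Mtilde_{1,2}\subset \Ctilde_{1,1}$ parametrising sections of type $A_h$ with $h\leq 2$ is smooth, and the closed complement is the locus where the marked point is a cusp; near such a point the universal family is étale-locally the versal deformation $y^2 = x^3 + ax + b$ of a cusp, whose total space is smooth over $\spec\kappa$. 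Since the base $\Mtilde_{1,1}$ is smooth and the total space of the versal family is smooth, $\Ctilde_{1,1}$ is smooth as a stack, and hence so is the open substack $\Ctilde_{1,1}\setminus \Mtilde_{1,1}$.

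Finally I would combine these: the product of three smooth algebraic stacks is smooth, so the source of $\pi_2$ is smooth. Because $\pi_2$ is a $C_2$-torsor and $\mathrm{char}(\kappa)\neq 2$, $\pi_2$ is finite étale and surjective; smoothness then descends to the target $\Detilde_{1,1}\setminus\Detilde_{1,1,1}$, yielding the claim.

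No step here looks truly delicate; the only point that requires a moment of care is the smoothness of $\Ctilde_{1,1}$ at the cuspidal points, and this is handled by the explicit versal deformation of an $A_2$-singularity. Everything else is formal from the previous lemma.
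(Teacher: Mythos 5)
Your argument is correct and is in substance the proof the paper intends: the corollary carries no written proof precisely because it follows from \Cref{lem:detilde-1-1} and \Cref{lem:mtilde-12} exactly as you say --- the source of the $C_2$-torsor is smooth, a torsor under a finite constant group is finite \'etale, and smoothness descends along surjective \'etale morphisms. The one place where you diverge is the smoothness of the factor $\Mtilde_{1,2}\setminus\Mtilde_{1,1}$, and there your reading of \Cref{theo:contrac} is off, though harmlessly so: the image of the contraction morphism is the locus of sections of type $A_h$ with $h\leq 2$, so it already \emph{contains} the cusp sections; and since the fibers of $\Ctilde_{1,1}\to\Mtilde_{1,1}$ have at most cusps (see the remark immediately preceding the corollary), the contraction is an isomorphism onto all of $\Ctilde_{1,1}$, so the closed complement you propose to handle by versal deformations is in fact empty. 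Your versal-deformation step is nevertheless sound --- it amounts to the description of $\Ctilde_{1,1}\setminus\Mtilde_{1,1}$ as the $\gm$-quotient of the smooth hypersurface $y^2=x^3+\alpha x+\beta$, recalled from \cite{DiLorPerVis} in the proof of \Cref{prop:relat-detilde-1-1} --- so the proof goes through; it is just redundant. Note also that the whole detour through $\Ctilde_{1,1}$ can be avoided: $\Mtilde_{1,2}$ is itself smooth by \Cref{theo:descr-quot}, hence so is its open substack $\Mtilde_{1,2}\setminus\Mtilde_{1,1}$, which together with the smoothness of the two factors $\Mtilde_{1,1}$ and the torsor argument finishes the proof in one line.
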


 \Cref{lem:chow-tor} implies that 
$$\ch(\Detilde_{1,1}\setminus \Detilde_{1,1,1})\simeq \ZZ[1/6,t,t_1,t_2]^{\rm inv},$$
where the action of $C_2$ is defined on object by the following association 
$$ \Big( (C,p_1,p_2), (E_1,e_1), (E_2,e_2) \Big) \mapsto \Big((C,p_2,p_1), (E_2,e_2), (E_1,e_1)\Big).$$

A computation shows that the involution acts on the Chow ring of the product in the following way
$$ (t,t_1,t_2) \mapsto (t,t_2,t_1)$$
and therefore we have the description we need.

\begin{proposition}
	In the situation above, we have the following isomorphism:
	$$ \ch(\Detilde_{1,1}\setminus \Detilde_{1,1,1})\simeq \ZZ[1/6,t,c_1,c_2]$$ 
	where $c_1:=t_1+t_2$ and $c_2:=t_1t_2$.
\end{proposition}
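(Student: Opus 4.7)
The plan is to apply \Cref{lem:chow-tor} to the $C_2$-torsor
$$\pi_2:(\Mtilde_{1,2}\setminus \Mtilde_{1,1}) \times \Mtilde_{1,1} \times \Mtilde_{1,1} \longrightarrow \Detilde_{1,1}\setminus \Detilde_{1,1,1}$$
established in \Cref{lem:detilde-1-1}. Since we have inverted $2$, the lemma and the remark following it give that $\pi_2^*$ is an isomorphism between $\ch(\Detilde_{1,1}\setminus \Detilde_{1,1,1})$ and the $C_2$-invariant subring of the Chow ring of the source. Thus the problem reduces to computing the Chow ring of the product together with the induced action of $C_2$.

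Next, I would compute the Chow ring of the source as a tensor product. By \Cref{lem:mtilde-12} we have $\ch(\Mtilde_{1,2}\setminus \Mtilde_{1,1})\simeq \ZZ[1/6,t]$, and since $\Mtilde_{1,1}\simeq [\AA^2/\gm]$ (used implicitly in \Cref{prop:relation-detilde-1}) its Chow ring is $\ZZ[1/6,t_i]$ for $i=1,2$ on the two factors. Applying the K\"unneth formula for quotient stacks (which is valid here because all three factors are polynomial in one variable, so in particular have free Chow groups; this is an instance of Lemma 2.12 of \cite{Tot} already invoked earlier in the paper), the Chow ring of the product is the polynomial ring $\ZZ[1/6,t,t_1,t_2]$.

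I would then identify the action of $C_2$ on this Chow ring. The nontrivial element of $C_2$ acts on objects by $((C,p_1,p_2),(E_1,e_1),(E_2,e_2))\mapsto ((C,p_2,p_1),(E_2,e_2),(E_1,e_1))$; since $t$ is the $\psi$-class on $\Mtilde_{1,2}$ which is unaffected by swapping the two marked points at separating nodes (both marked points are on separating nodes, but $t$ here is the $\psi$-class inherited from $\Mtilde_{1,1}$ through the isomorphism $\Mtilde_{1,2}\setminus \Mtilde_{1,1}\simeq \Ctilde_{1,1}\setminus \Mtilde_{1,1}$, so it is $C_2$-invariant), while the swap of the two $\Mtilde_{1,1}$ factors exchanges $t_1$ and $t_2$. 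Hence the action on the Chow ring is the one sending $(t,t_1,t_2)$ to $(t,t_2,t_1)$.

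Finally, the invariant subring is computed by the fundamental theorem of symmetric polynomials: $\ZZ[1/6,t_1,t_2]^{C_2}=\ZZ[1/6,c_1,c_2]$ with $c_1=t_1+t_2$ and $c_2=t_1t_2$. Tensoring with $\ZZ[1/6,t]$ (on which $C_2$ acts trivially), we obtain $\ZZ[1/6,t,t_1,t_2]^{C_2}=\ZZ[1/6,t,c_1,c_2]$, which is the claimed description. The only mild subtlety, and the step I would be most careful to verify, is that $t$ is genuinely $C_2$-invariant; everything else is a direct application of the machinery already established in the paper.
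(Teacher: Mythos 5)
Your proposal is correct and follows essentially the same route as the paper: the paper likewise applies \Cref{lem:chow-tor} to the $C_2$-torsor of \Cref{lem:detilde-1-1}, identifies the Chow ring of the product as $\ZZ[1/6,t,t_1,t_2]$, observes that the involution acts by $(t,t_1,t_2)\mapsto(t,t_2,t_1)$, and takes invariants. The one point you flag as subtle---the $C_2$-invariance of $t$---is also treated briefly in the paper (it asserts it as ``a computation''), and your justification is consistent with the paper's later remark that both $\psi$-classes of $\Mtilde_{1,2}\setminus\Mtilde_{1,1}$ coincide with the generator $t$, which forces the swap to act trivially on it.
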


It remains to describe the normal bundle of the closed immersion $\Detilde_{1,1}\setminus \Detilde_{1,1,1} \into \Mtilde_{3}\setminus\Detilde_{1,1,1}$ and the other classes. As usual, we denote by $\delta_1$ the fundamental class of $\Detilde_1$ in $\Mtilde_3$.

\begin{proposition}\label{prop:relat-detilde-1-1}
	We have the following equalities in the Chow ring of $\Detilde_{1,1}\setminus \Detilde_{1,1,1}$:
	\begin{itemize}
		\item[(1)] $\lambda_1=-t-c_1$,
		\item[(2)] $\lambda_2=c_2+tc_1$,
		\item[(3)] $\lambda_3=-tc_2$,
		\item[(4)] $[H]\vert_{\Detilde_{1,1}\setminus \Detilde_{1,1,1}} = -3t$,
		\item[(5)] $\delta_1\vert_{\Detilde_{1,1}\setminus \Detilde_{1,1,1}} = 2t+c_1$.
	\end{itemize}
	Furthermore, the second Chern class of the normal bundle of the closed immersion $\Detilde_{1,1}\setminus \Detilde_{1,1,1} \into \Mtilde_{3}\setminus\Detilde_{1,1,1}$ is equal to $c_2+tc_1+t^2$.
\end{proposition}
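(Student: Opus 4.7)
The plan is to transfer everything through the $C_2$-torsor
\[\pi_2: (\Mtilde_{1,2}\setminus \Mtilde_{1,1}) \times \Mtilde_{1,1} \times \Mtilde_{1,1} \longrightarrow \Detilde_{1,1}\setminus \Detilde_{1,1,1}\]
of \Cref{lem:detilde-1-1}. By \Cref{lem:chow-tor}, it suffices to compute each pullback in the invariant subring $\ZZ[1/6,t,t_1,t_2]^{C_2}$, whose generators are $t$, $c_1=t_1+t_2$, $c_2=t_1t_2$. The universal curve over the product is $E_1\cup_{e_1=p_1} C \cup_{p_2=e_2} E_2$ with two separating nodes.

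For (1)--(3), applying \Cref{lem:hodge-sep} at each of the two separating nodes yields the splitting
\[\pi_2^{*}\HH_3 \simeq \pi_{*}\omega_C \oplus \HH_1^{(1)} \oplus \HH_1^{(2)}\]
as a direct sum of three line bundles, because every component has arithmetic genus one. For a genus-one curve with section $e$, $\pi_{*}\omega \simeq e^{*}\omega$, so $c_1(\HH_1^{(i)}) = -c_1(p^{*}\cO(p)) = -t_i$. Under the isomorphism $\Mtilde_{1,2}\setminus\Mtilde_{1,1}\simeq \Ctilde_{1,1}\setminus \Mtilde_{1,1}$ of \Cref{lem:mtilde-12}, the Hodge bundle $\pi_{*}\omega_C$ is pulled back from $\Mtilde_{1,1}$, so $c_1(\pi_{*}\omega_C)=-t$. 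Expanding the total Chern class $c(\pi_2^{*}\HH_3)=(1-t)(1-t_1)(1-t_2)$ then yields items (1), (2), (3).

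For (5) and (6), at each of the two separating nodes the one-dimensional smoothing deformation contributes a line bundle $L_i = N_{p_i/C}\otimes N_{e_i/E_i}$; the closed immersion is regular of codimension two (both stacks being smooth) with normal bundle $N = L_1 \oplus L_2$. By the argument in the proof of \Cref{prop:relation-detilde-1}, $c_1(N_{e_i/E_i})=t_i$. For $N_{p_1/C}$ on $\Mtilde_{1,2}\setminus\Mtilde_{1,1}$, the class is the pullback of the $\psi$-class from $\Mtilde_{1,1}$ along the structure morphism, giving $c_1(N_{p_1/C})=t$; for $N_{p_2/C}$, an identification of the relative dualizing sheaf with the pullback of the Hodge bundle on the smooth locus yields the same answer, and this extends globally since $\ch^{1}(\Mtilde_{1,2}\setminus\Mtilde_{1,1})=\ZZ[1/6]\cdot t$. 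Hence $c_1(L_i)=t+t_i$, so $c_2(N)=(t+t_1)(t+t_2)=t^{2}+tc_1+c_2$, proving (6). Finally, the two transverse branches of $\Detilde_1$ meeting along $\Detilde_{1,1}\setminus\Detilde_{1,1,1}$ yield, by the standard self-intersection formula, $\delta_1|_{\Detilde_{1,1}\setminus\Detilde_{1,1,1}}=c_1(L_1)+c_1(L_2)=2t+c_1$, proving (5).

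The main obstacle is (4). A curve $E_1\cup C\cup E_2$ in this stratum is hyperelliptic precisely when each component carries an involution fixing the relevant node markings and these glue compatibly; each $E_i$ always admits its elliptic involution, and the middle curve $C$ admits an involution fixing both $p_1$ and $p_2$ exactly when $p_2-p_1$ is a nontrivial $2$-torsion point in the group law with $p_1$ as origin. The gluing is automatic since every elliptic involution acts by $-1$ on tangent lines at its fixed points. Therefore $\pi_2^{*}[H]$ is supported on $W\times \Mtilde_{1,1}\times \Mtilde_{1,1}$, where $W\subset \Ctilde_{1,1}\setminus\Mtilde_{1,1}$ is the union of the three nontrivial $2$-torsion sections of the universal elliptic curve. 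Using the Weierstrass description $\Mtilde_{1,1}=[\AA^{2}_{a_4,a_6}/\gm]$ with weights $(4,6)$, one realizes $W$ as the vanishing locus of $y$ away from the zero section in $y^{2}=x^{3}+a_4x+a_6$, and a weighted-degree computation in $\ZZ[1/6,t]$ gives $[W]=-3t$, completing (4). The obstacle here is really the careful sign/weight bookkeeping in the $2$-torsion calculation, since a naive count gives $|3|\cdot t$ with unclear sign.
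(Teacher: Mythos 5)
Your proposal is correct and takes essentially the same route as the paper's own proof: the $\lambda$-classes via \Cref{lem:hodge-sep}, the class $\delta_1$ and the normal bundle via the decomposition $(N_{p_1|C}\otimes N_{e_1|E_1})\oplus(N_{p_2|C}\otimes N_{e_2|E_2})$ from Appendix A of \cite{DiLorVis} together with the fact that both $\psi$-classes of $\Mtilde_{1,2}\setminus\Mtilde_{1,1}$ equal $t$, and the hyperelliptic class via identifying the locus of two-pointed genus-$1$ curves whose markings are both fixed by an involution with the divisor $y=0$ in the Weierstrass presentation of $\Ctilde_{1,1}\setminus\Mtilde_{1,1}$ from Lemma 3.2 of \cite{DiLorPerVis}. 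The only differences are expository: you spell out the total-Chern-class expansion and the two-branch transversality argument for (5), which the paper leaves implicit.
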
 

\begin{proof}
	The restriction of the $\lambda$-classes can be computed using \Cref{lem:hodge-sep}. The proof of formula (5) is exactly the same of the computation of the normal bundle of $\Detilde_{1}\setminus \Detilde_{1,1}$ in $\Mtilde_3 \setminus \Detilde_{1,1}$. The only thing to remark is that the two $\psi$-classes of $\Mtilde_{1,2}\setminus \Mtilde_{1,1}$ coincide with the generator $t$ of $\ch(\Mtilde_{1,2}\setminus \Mtilde_{1,1})$.
	
	As far as the fundamental class of the hyperelliptic locus is concerned, it is clear that it coincides with the fundamental class of the locus in $\Mtilde_{1,2}\setminus \Mtilde_{1,1}$ parametrizing $2$-pointed stable curves of genus $1$ such that the two sections are both fixed points for an involution. This computation can be done using the description of $\Mtilde_{1,2}\setminus \Mtilde_{1,1}$ as $\Ctilde_{1,1}\setminus \Mtilde_{1,1}$, in particular as in Lemma 3.2 of \cite{DiLorPerVis}. In fact, they proved that $\Ctilde_{1,1}\setminus \Mtilde_{1,1}$ is an invariant subscheme $W$ of a $\gm$-representation $V$ of dimension $4$, where the action can be described as $$t.(x,y,\alpha,\beta)=(t^{-2}x,t^{-3}y,t^{-4}\alpha,t^{-6}\beta)$$
	for every $t \in \gm $ and $(x.y.\alpha,\beta) \in V$. Specifically, $W$ is the hypersurface in $V$ defined by the equation $y^2=x^3+\alpha x+\beta$, which is exactly the dehomogenization of the Weierstrass form of an elliptic curve with a flex at the infinity. A straightforward computation shows that the hyperelliptic locus is defined by the equation $y=0$. 
	
	Finally, the normal bundle of the closed immersion can be described using the theory developed in Appendix A of \cite{DiLorVis} as the sum 
	$$(N_{p_1|C}\otimes N_{e_1|E_1})\oplus (N_{p_2|C}\otimes N_{e_2|E_2})$$
	for every element $[(C,p_1,p_2), (E_1,e_1), (E_2,e_2)]$ in $\Detilde_{1,1}\setminus \Detilde_{1,1,1}$. A straightforward computation using Chern classes implies the result.
\end{proof}

\section{Description of $\Detilde_{1,1,1}$}\label{sec:6}

Finally, to describe the last strata, we use the morphism introduced in the previous paragraph to define
$$c_6:\Mtilde_{1,1} \times \Mtilde_{1,1}\times \Mtilde_{1,1} \rightarrow \Detilde_{1,1,1}$$
which can be described as taking three elliptic (possibly singular) curves $(E_1,e_1)$, $(E_2,e_2)$, $(E_3,e_3)$ and attach them to a projective line with three distinct points $(\PP^1,0,1,\infty)$ using the order $e_1\equiv 0$, $e_2\equiv 1$, $e_3 \equiv \infty$. We denote by $S_3$ the group of permutation of a set of three elements.

\begin{lemma}\label{lem:descr-delta-1-1-1}
	The morphism $c_6$ is a $S_3$-torsor.
\end{lemma}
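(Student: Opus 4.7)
The plan is to verify directly the definition of an $S_3$-torsor, namely that $c_6$ is an $S_3$-equivariant surjective representable morphism such that the shear map
\[
S_3 \times \Big(\Mtilde_{1,1}\Big)^3 \longrightarrow \Big(\Mtilde_{1,1}\Big)^3 \times_{\Detilde_{1,1,1}} \Big(\Mtilde_{1,1}\Big)^3, \qquad (\sigma, x) \longmapsto (x, \sigma \cdot x),
\]
is an isomorphism. First I would check $S_3$-invariance: a permutation $\sigma$ of the three elliptic tails produces an isomorphic glued curve via the unique element of $\mathrm{PGL}_2$ sending $(0,1,\infty)$ to $(\sigma^{-1}(0),\sigma^{-1}(1),\sigma^{-1}(\infty))$. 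This uses the fact that $\mathrm{PGL}_2$ acts sharply $3$-transitively on $\PP^1$, so the $\sigma$-reordered gluing data produces a canonically isomorphic curve.

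Second, I would establish surjectivity by analyzing the structure of an $A_r$-stable genus $3$ curve $C$ with three separating nodes. Since every node is separating, the dual graph $\Gamma$ of $C$ is a tree on $4$ vertices with $3$ edges. Using the arithmetic genus formula $p_a = \sum g_v + \delta - k + 1 = \sum g_v$ (with $\delta=3$, $k=4$), the geometric genera sum to $3$. The stability of $\omega_C$ requires $2g_v - 2 + n_v > 0$ for every vertex $v$, where $n_v$ is its valence in $\Gamma$. If $\Gamma$ were a path, all four vertices would have valence $\leq 2$ and hence need $g_v \geq 1$, contradicting $\sum g_v = 3$. Thus $\Gamma$ is a star; stability of the three leaves forces each to carry a 1-pointed curve of genus~$1$, and the central vertex of valence $3$ carries $\PP^1$ with three marked points. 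Hence $C$ is in the image of $c_6$, and the same argument works in families by Proposition 3.13 and 3.16 of \cite{Per1} applied to the three separating-node sections, exactly as in the proof of \Cref{lem:detilde-1-1}.

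Third, I would construct the inverse of the shear map. Given two triples $((E_i,e_i))_i$ and $((E'_i,e'_i))_i$ together with an isomorphism $\phi$ between the corresponding glued curves, $\phi$ must preserve the three separating nodes as an unordered set, hence induces a permutation $\sigma \in S_3$ of the elliptic tails; the restrictions of $\phi$ to the tails then provide isomorphisms $(E_i,e_i) \simeq (E'_{\sigma(i)},e'_{\sigma(i)})$. Conversely, every such $\sigma$ with compatible isomorphisms of tails extends uniquely to an isomorphism of the glued curves via the corresponding $\mathrm{PGL}_2$-automorphism of the central $\PP^1$. This reciprocity defines the inverse, and one checks it is natural in $S$ by working étale-locally.

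The only substantive obstacle is showing that the central $\PP^1$ is rigid with its three sections, i.e., that the automorphism group of $(\PP^1, 0, 1, \infty)$ is trivial, so that all of the moduli is captured by the three elliptic tails and the ambiguity in their labelling is exactly $S_3$; this is immediate from sharp $3$-transitivity. Once this is observed, everything else is bookkeeping similar to \Cref{lem:detilde-1-1}.
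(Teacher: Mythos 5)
Your overall route coincides with the paper's: the paper proves this lemma by adapting the proof of \Cref{lem:detilde-1-1}, whose content is that a curve in the stratum together with its separating nodes is recovered from its partial normalization and the gluing data (Propositions 3.13 and 3.16 of \cite{Per1}); this is precisely what your equivariance and shear-map steps unpack, and those two steps are correct.

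Your surjectivity step, however, fails as written. A geometric point of $\Detilde_{1,1,1}$ has three separating nodes, but it is in general \emph{not} a nodal curve with four components: the tails range over $\Mtilde_{1,1}$, so they may be nodal cubics (whose node is non-separating, so the claim ``every node is separating'' is false and the dual graph acquires loops) or cuspidal/tacnodal genus-$1$ curves, which are not nodal at all, so the formula $p_a=\sum g_v+\delta-k+1$ with \emph{geometric} genera does not apply. Concretely, for three cuspidal cubics glued to a $\PP^1$ at $0,1,\infty$ --- a point of this stratum --- the geometric genera sum to $0$, not $3$, and your argument breaks down. The repair keeps the shape of your argument: normalize $C$ only at the three separating nodes, keeping the internal singularities of the pieces. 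By \Cref{rem:genus-count} the four resulting connected pieces have \emph{arithmetic} genera summing to $3$; they form a tree with four vertices and three edges; and ampleness of $\omega_C$ is equivalent to stability of each piece marked by the preimages of the nodes. Your path-versus-star dichotomy then runs verbatim with arithmetic genera: each leaf needs genus $\geq 1$, so the tree is a star with three genus-$1$ leaves and a genus-$0$ center. One must still argue the center is irreducible: a connected arithmetic genus-$0$ curve is a tree of $\PP^1$'s, and any internal node of that tree would be a fourth separating node of $C$, impossible in genus $3$ (alternatively, it would violate stability); hence the center is a $3$-pointed $\PP^1$, and the leaves, being $1$-pointed genus-$1$ stable curves (which never have separating nodes), are objects of $\Mtilde_{1,1}$. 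With this correction your proof is complete and agrees with the paper's.
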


\begin{proof}
	The proof of \Cref{lem:detilde-1-1} can be adapted perfectly to this statement. 
\end{proof}

The previous lemma implies that we have an action of $S_3$ on $\ch(\Mtilde_{1,1}^{\times 3})$. Therefore it is clear that 
$$ \ch(\Detilde_{1,1,1})=\ZZ[1/6,c_1,c_2,c_3]$$
where $c_i$ is the $i$-th symmetric polynomial in the variables $t_1,t_2,t_3$, which are the generators of the Chow rings of the factors.. 

\begin{proposition}
	We have that the following equalities in the Chow ring of $\Detilde_{1,1,1}$:
	\begin{itemize}
		\item[(1)] $\lambda_1=-c_1$,
		\item[(2)] $\lambda_2=c_2$,
		\item[(3)] $\lambda_3=-c_3$,
		\item[(4)] $[H]\vert_{\Detilde_{1,1,1}}=0$,
		\item[(5)] $\delta_1\vert_{\Detilde_{1,1,1}}=c_1$,
		\item[(6)] $\delta_{1,1}\vert_{\Detilde_{1,1,1}}=c_2$.
	\end{itemize}
	Furthermore, the third Chern class of the normal bundle of the closed immersion $\Detilde_{1,1,1}\into \Mtilde_3$ is equal to $c_3$.
\end{proposition}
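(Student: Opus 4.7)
The plan is to pull back every class along the $S_3$-torsor $c_6 \colon \Mtilde_{1,1}^{\times 3} \to \Detilde_{1,1,1}$ from \Cref{lem:descr-delta-1-1-1}; by \Cref{lem:chow-tor} it is enough to verify the identities in $\ZZ[1/6,t_1,t_2,t_3]^{S_3} = \ZZ[1/6,c_1,c_2,c_3]$, where $t_i$ denotes the $\psi$-generator of the $i$-th $\Mtilde_{1,1}$-factor. Every class in the statement then becomes a symmetric polynomial in the $t_i$, which is precisely the content of (1)--(6).

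For the Hodge bundle I would apply \Cref{lem:hodge-sep} iteratively at each of the three separating nodes. The normalization of the universal curve is $E_1 \sqcup E_2 \sqcup E_3 \sqcup \PP^1$, and since $\pi_*\omega_{\PP^1} = 0$ the pullback of $\HH$ equals $\bigoplus_{i=1}^3 \pi_*\omega_{E_i}$. For $(E,e) \in \Mtilde_{1,1}$ one has $\pi_*\omega_E \simeq (T_e E)^{\vee}$, so $c_1(\pi_*\omega_{E_i}) = -t_i$, in agreement with the sign conventions in \Cref{prop:relation-detilde-1} and \Cref{prop:relat-detilde-1-1}. Hence $c(\HH) = \prod_{i=1}^3 (1 - t_i)$, yielding (1), (2) and (3).

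For the hyperelliptic locus I would establish the stronger statement $\Htilde_3 \cap \Detilde_{1,1,1} = \emptyset$, which forces $[H]|_{\Detilde_{1,1,1}} = 0$. Suppose $(C,\sigma)$ is hyperelliptic with $C \in \Detilde_{1,1,1}$. If $\sigma$ permuted the elliptic components nontrivially, a transposition $E_i \leftrightarrow E_j$ would produce a quotient component $(E_i \cup E_j)/\sigma \simeq E_i$ of arithmetic genus one, contradicting the fact that the quotient must be of genus zero. Thus $\sigma$ preserves each component; its restriction to the central $\PP^1$ must then fix the three nodal preimages $q_1, q_2, q_3$, so $\sigma|_{\PP^1} = \mathrm{id}$, violating the finite-fixed-locus condition.

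Finally, following Appendix A of \cite{DiLorVis}, the normal bundle decomposes as $N = N_1 \oplus N_2 \oplus N_3$, where $N_i = T_{e_i}E_i \otimes T_{q_i}\PP^1$ is the smoothing line of the $i$-th node. Since $(\PP^1;q_1,q_2,q_3)$ is the unique stable three-pointed rational curve, each $T_{q_i}\PP^1$ is trivial on $\Detilde_{1,1,1}$, so $c_1(N_i) = t_i$ and $c_3(N) = t_1 t_2 t_3 = c_3$. Letting $x_i$ be the local smoothing coordinate of the $i$-th node, one has $\Detilde_1 = \bigcup_i \{x_i = 0\}$ and $\Detilde_{1,1} = \bigcup_{i<j} \{x_i = x_j = 0\}$ locally near $\Detilde_{1,1,1}$, with the branches meeting transversally; taking fundamental classes gives $\delta_1|_{\Detilde_{1,1,1}} = \sum_i t_i = c_1$ and $\delta_{1,1}|_{\Detilde_{1,1,1}} = \sum_{i<j} t_i t_j = c_2$. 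The main obstacle is the rigorous justification of this local decomposition in the stacky setting: one needs that simultaneous smoothing of the three independent nodes is unobstructed and that $\Detilde_1$ and $\Detilde_{1,1}$ pull back to reduced unions of transverse smooth branches, which reduces to a standard but careful local deformation-theoretic analysis of three independent $A_1$-singularities.
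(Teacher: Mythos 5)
Your proof is correct and follows essentially the same route as the paper: \Cref{lem:hodge-sep} for the $\lambda$-classes, rigidity of the three-pointed $\PP^1$ (an involution fixing the three nodes restricts to the identity there, so its fixed locus is not finite) to show the hyperelliptic locus is empty, and the node-smoothing decomposition $N_{\Detilde_{1,1,1}|\Mtilde_3}=\bigoplus_i N_{e_i|E_i}$ from Appendix A of \cite{DiLorVis} for the normal bundle and the restrictions of $\delta_1$ and $\delta_{1,1}$. The deformation-theoretic justification you flag as the main obstacle is precisely what the paper delegates to that appendix, so there is no genuine gap; you moreover spell out details the paper leaves implicit (the case of an involution permuting the elliptic tails, and the transverse-branch computation of $\delta_1\vert_{\Detilde_{1,1,1}}$ and $\delta_{1,1}\vert_{\Detilde_{1,1,1}}$).
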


\begin{proof}
	We can use \Cref{lem:hodge-sep} to get the description of  the $\lambda$-classes. To compute $\delta_1$ and $\delta_{1,1}$ and the third Chern class of the normal bundle one can use again the results of Appendix A of \cite{DiLorVis} and adapt the proof of \Cref{prop:relat-detilde-1-1}. Notice that in this case $$N_{\Detilde_{1,1,1}|\Mtilde_3}=N_{e_1|E_1}\oplus N_{e_2|E_2}\oplus N_{e_3|E_3}.$$ Finally, we know that there are no hyperelliptic curves of genus $3$ with three separating nodes. In fact, any involution restricts to the identity over the projective line, because it fixes three points. But then its fixed locus is not finite, therefore it is not hyperelliptic. 
\end{proof}

\section{The gluing procedure and the Chow ring of $\Mtilde_3$}\label{sec:7}

In the last section of this chapter, we explain how to calculate explicitly the Chow ring of $\Mtilde_3$. It is not clear a priori how to describe the fiber product that appears in \Cref{lem:gluing}.

Let $\cU$, $\cZ$ and $\cX$ as in \Cref{lem:gluing} and denote by $i:\cZ \into \cX$ the closed immersion and by $j:\cU \into \cZ$ the open immersion.  Let us set some notations.
\begin{itemize}
\item[$\bullet$] $ \ch(\cU)$ is generated by the elements $x'_1,\dots,x'_r$ and let $x_1,\dots,x_r$ be some liftings of $x_1',\dots,x_r'$ in $\ch(\cX)$; we denote by $\eta$ the morphism 
$$ \ZZ[1/6,X_1,\dots,X_r,Z] \longrightarrow \ch(\cX)$$
where $X_h$ maps to $x_h$ for $h=1,\dots,r$ and $Z$ maps to $[\cZ]$, the fundamental class of $\cZ$; we denote by $\eta_{\cU}$ the composite 
$$ \ZZ[1/6,X_1,\dots,X_r] \into \ZZ[1/6,X_1,\dots,X_r,Z] \longrightarrow \ch(\cX)\rightarrow \ch(\cU).$$ 
Furthermore, we denote by $p_1'(X), \dots, p_n'(X)$ a choice of generators for $\ker(\eta_{\cU})$, where $X:=(X_1,\dots,X_r)$.
\item[$\bullet$]  $ \ch(\cZ)$ is generated by elements $y_1,\dots,y_s \in \ch(\cZ)$; we denote by $a$ the morphism 
$$ \ZZ[1/6,Y_1,\dots,Y_s] \longrightarrow \ch(\cZ)$$ 
where $Y_h$ maps to $y_h$ for $h=1,\dots,s$. Furthermore, we denote by $q_1'(Y), \dots, q_m'(Y)$ a choice of generators for $\ker(a)$, where $Y:=(Y_1,\dots,Y_s)$.
\end{itemize}

Because $a$ is surjective, there exists a morphism
$$ \eta_{\cZ}: \ZZ[1/6,X_1,\dots,X_r,Z] \longrightarrow \ZZ[1/6,Y_1,\dots,Y_r]$$
which is a lifting of the morphism $i^*$, i.e. it makes the following diagram
$$
\begin{tikzcd}
	{\ZZ[1/6,X_1,\dots,X_r,Z]} \arrow[d, "\eta"] \arrow[r, "\eta_{\cZ}", dotted] & {\ZZ[1/6,Y_1,\dots,Y_s]} \arrow[d, "a"] \\
	\ch(\cX) \arrow[r, "i^*"]                                                    & \ch(\cZ)                               
\end{tikzcd}
$$
commute.

The cartesianity of the diagram in \Cref{lem:gluing} implies the following lemma.

\begin{lemma}\label{lem:gluing-sur}
	In the situation above, suppose that the morphism $\eta_{\cZ}$ is surjective. Then $\eta$ is surjective.
\end{lemma}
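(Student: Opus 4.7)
The plan is a two-step reduction exploiting the Edidin--Graham localization exact sequence
\begin{equation*}
\ch(\cZ) \xrightarrow{i_*} \ch(\cX) \xrightarrow{j^*} \ch(\cU) \longrightarrow 0
\end{equation*}
together with the projection (self-intersection) formula $i_*i^*(\gamma)=[\cZ]\cdot\gamma$ for the regular closed immersion $i:\cZ\into\cX$.

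Given an arbitrary $\alpha\in\ch(\cX)$, I first eliminate its restriction to the open part. Since $\ch(\cU)$ is generated by the classes $x'_h=j^*x_h$, there is a polynomial $P\in\ZZ[1/6,X_1,\dots,X_r]$ with $j^*\alpha=\eta_{\cU}(P)$; then $\alpha-\eta(P)$ lies in $\ker j^*=\operatorname{im}(i_*)$, so I can write $\alpha-\eta(P)=i_*\beta$ for some $\beta\in\ch(\cZ)$.

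Now I use the hypothesis on $\eta_{\cZ}$ to realize $\beta$ as the pullback of a class in the image of $\eta$. Because $a$ is surjective by construction of the $y_h$ and $\eta_{\cZ}$ is surjective by assumption, the composition $i^*\circ\eta=a\circ\eta_{\cZ}$ is itself surjective onto $\ch(\cZ)$. Hence there exists $Q\in\ZZ[1/6,X_1,\dots,X_r,Z]$ with $\beta=i^*\eta(Q)$, and the projection formula yields
\begin{equation*}
i_*\beta=i_*i^*\eta(Q)=[\cZ]\cdot\eta(Q)=\eta(Z\cdot Q).
\end{equation*}
Combining the two reductions, $\alpha=\eta(P+Z\cdot Q)$, which proves that $\eta$ is surjective.

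There is no serious obstacle in this setting: both the localization sequence and the self-intersection formula are standard for regular closed immersions of smooth quotient stacks, and the element $[\cZ]$ in $\ch(\cX)$ corresponds precisely to the auxiliary variable $Z$ in the polynomial ring, which is what makes the algebra work. The content of the lemma is that surjectivity of $\eta$ can be verified at the level of presentations on $\cZ$, which is convenient because in each concrete stratum treated in the subsequent sections we can exhibit explicit generators for $\ch(\cZ)$ coming from classes naturally defined on the ambient $\cX$.
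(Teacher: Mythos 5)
Your proof is correct, and every step checks out in the setting of smooth global quotient stacks: the Edidin--Graham localization sequence gives $\ker j^* = \im{i_*}$ after the (flat) base change to $\ZZ[1/6]$-coefficients, the formula $i_*i^*(\gamma)=[\cZ]\cdot\gamma$ is valid (note it is the projection formula applied to $1_{\cZ}$; the ``self-intersection formula'' proper is the other composite, $i^*i_* = c_d(N_{\cZ|\cX})\cdot(-)$, which you do not need), and the identity $i^*\circ\eta = a\circ\eta_{\cZ}$ is exactly the commutativity built into the definition of $\eta_{\cZ}$. The route is slightly different from the paper's: the paper deduces the lemma from the gluing square of \Cref{lem:gluing}, asserting in one line that surjectivity of $j^*\circ\eta$ and of $i^*\circ\eta$ suffices, which implicitly requires the reader to carry out precisely the two-step correction you make explicit (match the class on $\cU$, then absorb the difference, which is supported on $\cZ$, into a multiple of $Z$). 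Your version buys two things: it is self-contained, needing neither \Cref{lem:gluing} nor the gluing condition that $c_d(N_{\cZ|\cX})$ be a non-zero divisor (which the lemma as stated does not assume, and which only becomes necessary later, in \Cref{prop:desc-gluing}, to control $\ker\eta$); and it isolates the algebraic role of the auxiliary variable $Z$, namely that $\eta(Z\cdot Q)=[\cZ]\cdot\eta(Q)$ sweeps out the image of $i_*$ once $i^*\circ\eta$ is surjective. What the paper's formulation buys in exchange is brevity, since the gluing lemma is already in place and the same cartesian diagram is reused immediately afterwards for the description of the kernel.
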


\begin{proof}
	This follows because $ j^*\circ \eta$ is always surjective and the hypothesis implies that also $i^*\circ \eta$ is surjective.
\end{proof}

In the hypothesis of the lemma, we can also describe explicitly the relations, i.e. the kernel of $\eta$. First of all, denote by $q_h(X,Z)$ some liftings of $q_h'(Y)$ through the morphism $\eta_{\cZ}$ for $h=1,\dots,m$. A straightforward computation shows that we have $Zq_h(X,Z) \in \ker \eta$ for $h=1,\dots,m$.
We have found our first relations. We refer to these type of relations as  \emph{liftings of the closed stratum's relations}. 

Another important set of relations comes from the kernel of $\eta_{\cZ}$. In fact, if $v \in \ker\eta_{\cZ}$, then a simple computation shows that $\eta(Zv)=0$. Therefore if $v_1,\dots,v_l$ are generators of $\ker\eta_{\cZ}$, we get that $Zv_h \in \ker\eta$ for $h=1,\dots,l$.

Finally, the last set of relations are the relations that come from $\cU$, the open stratum. The element $p_h(X)$ in general is not a relation as its restriction to $\ch(\cZ)$ may fail to vanish. We can however do the following procedure to find a modification of $p_h$ which still vanishes restricted to the open stratum and it is in the kernel of $\eta$. Recall that we have a well-defined morphism
$$ i^*:\ch(\cU) \longrightarrow \ch(\cZ)/(c_{\rm top}(N_{\cZ|\cX}))$$ 
which implies that $\eta_{\cZ}(p_h) \in (q'_1,\dots,q'_m,c_{\rm top}(N_{\cZ|\cX}))$. We choose an element $g'_h \in \ZZ[1/6,Y_1,\dots,Y_s]$ such that 
$$ \eta_{\cZ}(p_h) + g_h'c_{\rm top}(N_{\cZ|\cX}) \in (q'_1,\dots,q'_m)$$
and consider a lifting $g_h$ of $g'_h$ through the morphism $\eta_{\cZ}$. A straightforward computation shows that $p_h+Zg_h \in \ker \eta$ for every $h=1,\dots,n$.

\begin{proposition}\label{prop:desc-gluing}
	In the situation above, $\ker\eta$ is generated by the elements $Zq_1,\dots,Zq_m,Zv_1,\dots,Zv_l,p_1+Zg_1,\dots,p_n+Zg_n$.
\end{proposition}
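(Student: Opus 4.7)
The plan is to prove the statement by establishing containment in both directions, implicitly assuming the gluing condition (since otherwise $p_h + Zg_h$ need not lie in $\ker\eta$). First I would verify that each listed element is in $\ker\eta$. For $Zq_h$ and $Zv_k$, this follows from the projection formula together with $\eta(Z)=[\cZ]=i_*(1)$ and $i^*\circ\eta = a\circ\eta_\cZ$: since $a(q_h')=0$ and $\eta_\cZ(v_k)=0$, the classes $i^*\eta(q_h)$ and $i^*\eta(v_k)$ vanish, whence $\eta(Zq_h)=i_*(i^*\eta(q_h))=0$ and similarly for $Zv_k$. For $p_h+Zg_h$, one checks $j^*\eta(p_h+Zg_h)=\eta_\cU(p_h)=0$ (since $j^*[\cZ]=0$ and $p_h'\in\ker\eta_\cU$) and $i^*\eta(p_h+Zg_h)=a\bigl(\eta_\cZ(p_h)+g_h'\cdot c_{\rm top}(N_{\cZ|\cX})\bigr)=0$ by the defining property of $g_h'$. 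Under the gluing condition, \Cref{lem:gluing} says that $\zeta$ is injective, so any class with vanishing restrictions to both $\cU$ and $\cZ$ must vanish; this yields $\eta(p_h+Zg_h)=0$.

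For the reverse inclusion, given $F\in\ker\eta$, the plan is a two-step reduction. Split $F=F_0(X)+ZF_1(X,Z)$ by separating the monomials divisible by $Z$. From $j^*\eta(F)=0$ one obtains $\eta_\cU(F_0)=0$, so $F_0=\sum_h a_h(X)p_h$ for some $a_h\in\ZZ[1/6,X_1,\ldots,X_r]$. Then $F-\sum_h a_h(p_h+Zg_h)=Z\bigl(F_1-\sum_h a_h g_h\bigr)=:ZG$, so modulo the listed relations $F$ has the form $ZG$ with $\eta(ZG)=i_*(a(\eta_\cZ(G)))=0$. The gluing condition forces $i_*$ to be injective (under the identification of $\ch(\cX)$ with the fiber product of \Cref{lem:gluing}, $i_*(y)$ corresponds to $(c_{\rm top}(N_{\cZ|\cX})\cdot y,0)$, which vanishes only when $y=0$), so $\eta_\cZ(G)\in\ker a=(q_1',\ldots,q_m')$. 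Writing $\eta_\cZ(G)=\sum_h b_h' q_h'$ and lifting each $b_h'$ through the surjective map $\eta_\cZ$ to some $b_h$, we reduce $G$ modulo the $q_h$, and the remainder lies in $\ker\eta_\cZ=(v_1,\ldots,v_l)$; multiplying by $Z$ expresses the residual contribution as a combination of $Zq_h$ and $Zv_k$.

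The main obstacle, and the crucial place where the gluing condition enters, is the verification that $p_h+Zg_h\in\ker\eta$: by \Cref{lem:gluing}, $\ker\zeta = i_*{\rm Ann}(c_{\rm top}(N_{\cZ|\cX}))$, which is generally nontrivial, and only the non-zero-divisor hypothesis forces this annihilator to vanish. The same hypothesis yields the injectivity of $i_*$ used in the reverse direction, so the entire argument really takes place under the gluing condition.
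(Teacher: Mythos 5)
Your proof is correct and follows essentially the same route as the paper's: write an element of $\ker\eta$ as $F_0(X)+ZF_1(X,Z)$, use $\ker\eta_{\cU}=(p_1',\dots,p_n')$ to subtract $\sum_h a_h(p_h+Zg_h)$, and then use the gluing condition to place the remaining $Z$-multiple in $(Zq_1,\dots,Zq_m,Zv_1,\dots,Zv_l)$ via $\ker(a\circ\eta_{\cZ})$. The only organizational differences are that you verify membership of the listed elements in $\ker\eta$ inside the proof (the paper does this in the discussion preceding the proposition) and that you invoke the gluing condition through injectivity of $\zeta$ and of $i_*$, whereas the paper cancels the non-zero divisor $c_{\rm top}(N_{\cZ|\cX})$ directly in a polynomial identity modulo $(q_1',\dots,q_m')$ --- equivalent uses of the same hypothesis.
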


\begin{proof}
	Consider the following commutative diagram
	$$
	\begin{tikzcd}
		{\ZZ[1/6,X_1,\dots,X_r,Z]} \arrow[r, "\eta"] \arrow[d, "\eta_{\cZ}"] & \ch(\cX) \arrow[d, "i^*"] \arrow[r, "j^*"] & \ch(\cU) \arrow[d, "i^*"]           \\
		{\ZZ[1/6,Y_1,\dots,Y_s]} \arrow[r, "a"]                              & \ch(\cZ) \arrow[r, "b"]                    & \ch(\cZ)/(c_{\rm top}(N_{\cZ|\cX}))
	\end{tikzcd}
	$$
	where $b$ is the quotient morphisms. Recall that the right square is cartesian. Notice that because $\eta_{\cZ}$ is surjective, all the other morphisms are surjectives.
	
	We denote by $c$ the top Chern class of the normal bundle $N_{\cZ|\cX}$. Let $p(X)+Zg(X,Z)$ be an element in $\ker \eta$. Because we have $$(j^*\circ \eta)(p(X)+Zg(X,Z))=0,$$ thus $p \in (p_1',\dots,p_n')$ which implies there exists $b_1,\dots,b_n \in \ZZ[1/6,X_1,\dots,X_r]$ such that 
	$$ p = \sum_{h=1}^n b_hp_h . $$
	Now, we pullback the element to $\cZ$, thus we get
	$$ (i^*\circ \eta)(p+Zg)=a\Big(\sum_{h=1}^n \eta_{\cZ}(b_h) \eta_{\cZ}(p_h) + c \eta_{\cZ}(g)\Big)$$ 
	or equivalently   
	$$\sum_{h=1}^n \eta_{\cZ}(b_h) \eta_{\cZ}(p_h) + c \eta_{\cZ}(g) \in (q_1',\dots, q_m').$$
	By construction $\eta_{\cZ}(p_h)=-g_h'c + (q_1',\dots,q_m')$ and $\eta_{\cZ}(g_h)=g_h'$, therefore we get 
	$$ c  \eta_{\cZ}\Big(g-\sum_{h=1}^nb_hg_h\Big) \in (q_1',\dots,q_m'). $$
	Because $c$ is a non-zero divisor in $\ch(\cZ)$ we have that
	$$ \eta_{\cZ}\Big(g-\sum_{h=1}^nb_hg_h\Big) \in (q_1',\dots,q_m')$$ 
	or equivalently 
	$$g= \sum_{h=1}^n b_hg_h + t$$
	with $t \in \ker(a \circ \eta_{\cZ})$. Therefore we have that
	$$ p+Zg=\sum_{h=1}^n b_h(p_h+Zg_h)+Zt$$
	with $t \in \ker(a \circ \eta_{\cZ})$. One can check easily that $\ker(a \circ \eta_{\cZ})$ is generated by $(v_1,\dots,v_l,q_1,\dots,q_m)$.
\end{proof}

Now, we sketch how to apply this procedure to the first two strata, namely $\Htilde_3\setminus \Detilde_{1}$ and $\Mtilde_3 \setminus (\Htilde_3 \cup \Detilde_1)$, to get the Chow ring of $\Mtilde_3 \setminus \Detilde_1$. This is the most complicated part as far as computations is concerned. The other gluing procedures are left to the curious reader as they follows the exact same ideas. 

In our situation, we have $\cU:=\Mtilde_{3}\setminus (\Htilde_3 \cup \Detilde_1)$ and $\cZ:= \Htilde_3 \setminus \Detilde_1$. We know the description of their Chow rings thanks to \Cref{cor:chow-hyper} and \Cref{cor:chow-quart}. Let us look at the generators we need. The Chow ring of $\cU$ is generated by $\lambda_1$, $\lambda_2$ and $\lambda_3$. The Chow ring of $\cZ$ is generated by $\lambda_1$, $\lambda_2$ and $\xi_1$. Therefore the morphism 
$$\eta_{\cZ} : \ZZ[1/6,\lambda_1,\lambda_2,\lambda_3, H] \longrightarrow \ZZ[1/6,\lambda_1,\lambda_2,\xi_1]$$ 
is surjective because $\eta_{\cZ}(H)=(2\xi_1-\lambda_1)/3$. We can also describe the $\ker \eta_{\cZ}$ which is generated by any lifting of the description of $\lambda_3$ in $\cZ$ (see \Cref{lem:lambda-class-H}). This gives us our first relation (after multiplying it by $H$, the fundamental class of the hyperelliptic locus). Furthermore, we can consider the ideal of relations in $\Htilde_{3}\setminus \Detilde_1$ which is generated by the relations $c_9$, $D_1$ and $D_2$ we described in \Cref{cor:chow-hyper}. Therefore we have other three relations. 

Lastly, we consider the four relations as in \Cref{cor:chow-quart} and compute their image through $\eta_{\cZ}$. The hardest part is to find a description of these elements in terms of the generators of the ideal of relations of $\Htilde_3 \setminus \Detilde_1$ and the top Chern class of the normal bundle of the closed immersion. We do not go into details about the computations, but the main idea is to notice that every monomial of the polynomials we need to describe can be written in terms of the relations and the top Chern class.

We state our theorem, which gives us the description of the Chow ring of $\Mtilde_3$. We write the explicit relations in \Cref{rem:relations-Mtilde}.

\begin{theorem}\label{theo:main}
	We have the following isomorphism 
	$$ \ch(\Mtilde_3)\simeq \ZZ[1/6,\lambda_1,\lambda_2,\lambda_3,H,\delta_1,\delta_{1,1},\delta_{1,1,1}]/I$$
	where $I$ is generated by the following relations:
	\begin{itemize}
		\item $k_h$, which comes from the generator of $\ker i_H^*$, where $i_H: \Htilde_3\setminus \Detilde_1 \into \Mtilde_3\setminus \Detilde_1$;
		\item $k_{1}(1)$ and $k_1(2)$, which come from the two generators of $\ker i_{1}^*$ where $i_{1}: \Detilde_1\setminus \Detilde_{1,1} \into \Mtilde_3\setminus \Detilde_{1,1}$;
		\item $k_{1,1}(1)$, $k_{1,1}(2)$ and $k_{1,1}(3)$, which come from the three generators of $\ker i_{1,1}^*$ where $i_{1,1}: \Detilde_{1,1}\setminus \Detilde_{1,1,1} \into \Mtilde_3\setminus \Detilde_{1,1,1}$;
		\item $k_{1,1,1}(1)$, $k_{1,1,1}(2)$, $k_{1,1,1}(3)$ and $k_{1,1,1}(4)$, which come from the four generators of $\ker i_{1,1,1}^*$ where $i_{1,1,1}: \Detilde_{1,1,1} \into \Mtilde_3$;
		\item $m(1)$, $m(2)$, $m(3)$ and $r$, which are the litings of the generators of the relations of the open stratum $\Mtilde_3\setminus (\Htilde_3 \cup \Detilde_1)$;
		\item $h(1)$, $h(2)$ and $h(3)$, which are the liftings of the generators of the relations of the stratum $\Htilde_3 \setminus \Detilde_1$; 
		\item $d_1(1)$, which is the lifting of the generator of the relations of the stratum $\Detilde_1\setminus \Detilde_{1,1}$.
	\end{itemize}
	Furthemore, $h(2)$, $h(3)$ and $d_1(1)$ are in the ideal generated by the other relations. 
\end{theorem}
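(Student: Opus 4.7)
The plan is to apply Proposition \ref{prop:desc-gluing} four times along the stratification, in the order announced at the end of Section \ref{sec:1}: first glue $\cU := \Mtilde_3 \setminus (\Htilde_3 \cup \Detilde_1)$ and $\cZ := \Htilde_3 \setminus \Detilde_1$ inside $\cX := \Mtilde_3 \setminus \Detilde_1$, then successively attach $\Detilde_1 \setminus \Detilde_{1,1}$, $\Detilde_{1,1} \setminus \Detilde_{1,1,1}$, and finally $\Detilde_{1,1,1}$. At each step, the three families of generators of $\ker \eta$ furnished by Proposition \ref{prop:desc-gluing} account for the three kinds of relations appearing in the statement: the kernel-of-pullback relations $k_{\bullet}$ correspond to $[\cZ]\cdot \ker \eta_{\cZ}$, the closed-stratum relations $h(\bullet), d_1(\bullet)$ to liftings of the relations of $\ch(\cZ)$ multiplied by $[\cZ]$, and the open-stratum relations $m(\bullet), r$ to corrected liftings of the relations of $\ch(\cU)$. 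The counts $1,2,3,4$ of the four $\ker \eta_{\cZ}$'s match the number of variables to be eliminated at each step; for instance, at step one only $\lambda_3$ must be eliminated, hence the single $k_h$.

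Before each gluing I verify the hypotheses of Lemma \ref{lem:gluing-sur} and Proposition \ref{prop:desc-gluing}, i.e. the gluing condition (that $c_{\mathrm{top}}(N_{\cZ|\cX})$ is a non-zero divisor in $\ch(\cZ)$) and the surjectivity of $\eta_{\cZ}$. The relevant top Chern classes are $(2\xi_1-\lambda_1)/3$, $t+t_1$, $c_2+tc_1+t^2$, and $c_3$, supplied by Corollary \ref{cor:norm-hyper} and Propositions \ref{prop:relation-detilde-1}, \ref{prop:relat-detilde-1-1}, together with Section \ref{sec:6}; each is easily seen to be regular in the corresponding Chow ring by inspection of the presentations in Corollary \ref{cor:chow-hyper}, Proposition \ref{prop:descr-detilde-1}, and the polynomial descriptions of the two deepest strata. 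Surjectivity of each $\eta_{\cZ}$ follows because the generators $\xi_1, t_0, t_1, t, c_1, c_2, c_3$ of the successive $\ch(\cZ)$'s lift to explicit linear expressions in the ambient variables; for example $\xi_1 = (3H+\lambda_1)/2$ by Corollary \ref{cor:norm-hyper}, and analogous linear relations recover the $t_i$'s and $c_i$'s from Propositions \ref{prop:relation-detilde-1} and \ref{prop:relat-detilde-1-1}.

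The main obstacle lies in the first gluing step. For each of the four generators $z_2, p_0, p_1, p_2$ of $\ker \eta_{\cU}$ in Corollary \ref{cor:chow-quart}, I would compute its image $\eta_{\cZ}(p_h) \in \ZZ[1/6, \lambda_1, \lambda_2, \xi_1]$ by substituting the lifts for $H$ and $\lambda_3$ (from Corollary \ref{cor:norm-hyper} and Lemma \ref{lem:lambda-class-H}), and then exhibit an element $g_h'$ such that
$$\eta_{\cZ}(p_h) + g_h' \cdot (2\xi_1-\lambda_1)/3 \in (c_9, D_1, D_2).$$
Lifting $g_h'$ through $\eta_{\cZ}$ produces the corrected relations $p_h + Hg_h$, i.e.\ $m(1), m(2), m(3), r$. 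This is a finite but laborious symbolic computation because $c_9$ is a dense degree-$9$ polynomial and the $p_i$ interact non-trivially with it, so carrying it out by hand is not realistic; a computer algebra system is essentially unavoidable here. The remaining three gluings are considerably lighter: $\ch(\Detilde_{1,1}\setminus\Detilde_{1,1,1})$ and $\ch(\Detilde_{1,1,1})$ are polynomial rings (no lifted relations are produced), and $\ch(\Detilde_1\setminus\Detilde_{1,1})$ contributes only $d_1(1)$, so the bulk of the work after step one is purely the routine extraction of the $k_{\bullet}$'s as kernels of the surjections $\eta_{\cZ}$ between polynomial rings.

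Finally, the redundancy assertion — that $h(2), h(3), d_1(1)$ already belong to the ideal generated by the other relations — is verified by rewriting $H\cdot D_1$, $H\cdot D_2$, and $\delta_1\cdot f$ in terms of the ambient generators and exhibiting them as explicit $\ZZ[1/6]$-linear combinations of the remaining $k_{\bullet}, m(\bullet), r$ and of the deeper-stratum relations $k_{1,1}(\bullet), k_{1,1,1}(\bullet)$; this is an a posteriori symbolic check, not a conceptual argument, and can be performed once all relations have been assembled in the common polynomial ring $\ZZ[1/6, \lambda_1, \lambda_2, \lambda_3, H, \delta_1, \delta_{1,1}, \delta_{1,1,1}]$.
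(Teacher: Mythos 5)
Your proposal is correct and follows essentially the same route as the paper: iterated application of the gluing procedure (\Cref{lem:gluing-sur} and \Cref{prop:desc-gluing}) along the stratification in the same order, with the same identification of the three families of relations ($[\cZ]\cdot\ker\eta_{\cZ}$, lifted closed-stratum relations, corrected open-stratum relations), the same surjectivity argument via $\xi_1=(3H+\lambda_1)/2$, and the same delegation of the first gluing step and the redundancy check to symbolic computation. The one place you are more optimistic than warranted is calling the gluing condition at the first step ``easily seen by inspection'': checking that $(2\xi_1-\lambda_1)/3$ is a non-zero divisor in $\ZZ[1/6,\lambda_1,\lambda_2,\xi_1]/(c_9,D_1,D_2)$ is itself a nontrivial computer-algebra verification (unlike steps two through four, where the target rings are polynomial rings or a quotient by a product of linear forms coprime to the relevant class), though this matches the level of detail the paper itself supplies.
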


\begin{remark}\label{rem:relations-Mtilde}
	We write explicitly the relations we need to generate the ideal. 
	\begin{itemize}
		\item[]
		\begin{equation*}
		\begin{split}
		k_h& =\frac{1}{8}\lambda_1^3H + \frac{1}{8}\lambda_1^2H^2 + \frac{1}{4}\lambda_1^2H\delta_1 - \frac{1}{2}\lambda_1\lambda_2H - \frac{1}{8}\lambda_1H^3 +
		\frac{7}{8}\lambda_1H\delta_1^2 + \\ & + \frac{3}{2}\lambda_1\delta_1\delta_{1,1} - \frac{1}{2}\lambda_2H^2 + \lambda_3H - \frac{1}{8}H^4 - \frac{1}{4}H^3\delta_1 +
		\frac{1}{8}H^2\delta_1^2 + \frac{3}{4}H\delta_1^3 + \\ & + \frac{3}{2}\delta_1^2\delta_{1,1}
		\end{split}
		\end{equation*}
		
		\item[] 
		\begin{equation*}
		\begin{split}
		k_1(1)=\frac{1}{4}\lambda_1^2\delta_1 + \frac{1}{2}\lambda_1H\delta_1 + 2\lambda_1\delta_1^2 + \lambda_2\delta_1 + \frac{1}{4}H^2\delta_1 + H\delta_1^2 + \frac{7}{4}\delta_1^3 -\delta_1\delta_{1,1}
		\end{split}
		\end{equation*}
		
		\item[] 
		\begin{equation*}
		\begin{split}
		k_1(2)& =\frac{1}{4}\lambda_1^3\delta_1 + \frac{1}{2}\lambda_1^2H\delta_1 + \frac{5}{4}\lambda_1^2\delta_1^2 + \frac{1}{4}\lambda_1H^2\delta_1 + \frac{3}{2}\lambda_1H\delta_1^2 +
		\frac{7}{4}\lambda_1\delta_1^3 + \\ & + \lambda_1\delta_1\delta_{1,1} - \lambda_1\delta_{1,1,1} + \lambda_3\delta_1+  \frac{1}{4}H^2\delta_1^2 + H\delta_1^3 + \frac{3}{4}\delta_1^4 +
		\delta_1^2\delta_{1,1}
		\end{split}
		\end{equation*}
		
		\item[] 
		\begin{equation*}
		\begin{split}
		k_{1,1}(1)=-\lambda_1^2\delta_{1,1} - 2\lambda_1\delta_1\delta_{1,1} - \lambda_2\delta_{1,1} - \delta_1^2\delta_{1,1} + \delta_{1,1}^2
		\end{split}
		\end{equation*}
		
		\item[] 
		\begin{equation*}
		\begin{split}
		k_{1,1}(2)=3\lambda_1\delta_{1,1} + H\delta_{1,1} + 3\delta_1\delta_{1,1}
		\end{split}
		\end{equation*}
		
		\item[] 
		\begin{equation*}
		\begin{split}
		k_{1,1}(3)&=2\lambda_1^3\delta_{1,1} + 5\lambda_1^2\delta_1\delta_{1,1} + \lambda_1\lambda_2\delta_{1,1} + 4\lambda_1\delta_1^2\delta_{1,1} + \lambda_2\delta_1\delta_{1,1} + \\ & + \lambda_2\delta_{1,1,1} + \lambda_3\delta_{1,1} +
		\delta_1^3\delta_{1,1}
		\end{split}
		\end{equation*}
		
		\item[] 
		\begin{equation*}
		\begin{split}
		k_{1,1,1}(1)=\lambda_1\delta_{1,1,1} + \delta_1\delta_{1,1,1}
		\end{split}
		\end{equation*}
		
		\item[] 
		\begin{equation*}
		\begin{split}
		k_{1,1,1}(2)=\lambda_2\delta_{1,1,1} - \delta_{1,1}\delta_{1,1,1}
		\end{split}
		\end{equation*}
		
		\item[] 
		\begin{equation*}
		\begin{split}
		k_{1,1,1}(3)=\lambda_3\delta_{1,1,1} + \delta_{1,1,1}^2
		\end{split}
		\end{equation*}		
	
		\item[] 
		\begin{equation*}
		\begin{split}
		k_{1,1,1}(4)=H\delta_{1,1,1}
		\end{split}
		\end{equation*}
		
		\item[] 
		\begin{equation*}
		\begin{split}
		m(1)&=12\lambda_1^4 - \frac{7}{3}\lambda_1^3H + 27\lambda_1^3\delta_1 - 44\lambda_1^2\lambda_2 - \frac{706}{9}\lambda_1^2H^2 - \frac{65}{2}\lambda_1^2H\delta_1 + \\ & 
		+ 84\lambda_1^2\delta_1^2 - 32\lambda_1^2\delta_{1,1}  - 38\lambda_1\lambda_2H + 92\lambda_1\lambda_3 - \frac{715}{9}\lambda_1H^3 - \\ & -
		\frac{1340}{9}\lambda_1H^2\delta_1 - 25\lambda_1H\delta_1^2 + 69\lambda_1\delta_1^3  - 130\lambda_1\delta_1\delta_{1,1} + 92\lambda_1\delta_{1,1,1} + \\ & +
		6\lambda_2H^2 - \frac{46}{3}H^4 - \frac{1205}{18}H^3\delta_1 - \frac{562}{9}H^2\delta_1^2 - \frac{101}{6}H\delta_1^3 -
		54\delta_1^2\delta_{1,1}
		\end{split}
		\end{equation*}
		
		\item[] 
		\begin{equation*}
		\begin{split}
		m(2)&=-\frac{55}{18}\lambda_1^4H + \frac{9}{2}\lambda_1^4\delta_1 - 14\lambda_1^3\lambda_2 - \frac{31}{3}\lambda_1^3H^2 - \frac{58}{9}\lambda_1^3H\delta_1 +
		69\lambda_1^3\delta_1^2 - \\ & - \frac{272}{3}\lambda_1^3\delta_{1,1} -  \frac{173}{9}\lambda_1^2\lambda_2H + 2\lambda_1^2\lambda_3 - \frac{137}{18}\lambda_1^2H^3
		- \frac{167}{4}\lambda_1^2H^2\delta_1 + \\ & + \frac{1831}{36}\lambda_1^2H\delta_1^2 + \frac{459}{2}\lambda_1^2\delta_1^3 - 
		\frac{461}{3}\lambda_1^2\delta_1\delta_{1,1} + 2\lambda_1^2\delta_{1,1,1} + 48\lambda_1\lambda_2^2 + \\ & + \frac{1}{9}\lambda_1\lambda_2H^2 - \frac{1}{3}\lambda_1H^4 -
		\frac{605}{18}\lambda_1H^3\delta_1 - \frac{955}{18}\lambda_1H^2\delta_1^2 +  139\lambda_1H\delta_1^3 + \\ & + 291\lambda_1\delta_1^4 -
		49\lambda_1\delta_1^2\delta_{1,1} + 48\lambda_2^2H - 96\lambda_2\lambda_3 + \frac{16}{3}\lambda_2H^3 + 48\lambda_2\delta_1\delta_{1,1} - \\ & - 96\lambda_2\delta_{1,1,1}
		- \frac{241}{36}H^4\delta_1 - \frac{1111}{36}H^3\delta_1^2 - \frac{63}{4}H^2\delta_1^3 + \frac{367}{4}H\delta_1^4 + 126\delta_1^5
		\end{split}
		\end{equation*}
		
		\item[] 
		\begin{equation*}
		\begin{split}
		m(3)&=\frac{419}{648}\lambda_1^5H - 9\lambda_1^5\delta_1 + \frac{17903}{972}\lambda_1^4H^2 - \frac{19763}{648}\lambda_1^4H\delta_1 -
		\frac{285}{4}\lambda_1^4\delta_1^2 + \\ & + \frac{57344}{27}\lambda_1^4\delta_{1,1} - \frac{401}{162}\lambda_1^3\lambda_2H + 15\lambda_1^3\lambda_3 +
		\frac{100795}{1944}\lambda_1^3H^3 - \\ & - \frac{16057}{972}\lambda_1^3H^2\delta_1 - \frac{100555}{648}\lambda_1^3H\delta_1^2 -
		\frac{861}{4}\lambda_1^3\delta_1^3 + \frac{614635}{54}\lambda_1^3\delta_1\delta_{1,1}  + \\ & + 15\lambda_1^3\delta_{1,1,1} - \frac{6433}{81}\lambda_1^2\lambda_2H^2 -
		32\lambda_1^2\lambda_2\delta_{1,1} + \frac{11561}{216}\lambda_1^2H^4 + \\ & + \frac{12349}{324}\lambda_1^2H^3\delta_1 +
		\frac{559}{12}\lambda_1^2H^2\delta_1^2  - \frac{120883}{324}\lambda_1^2H\delta_1^3 - \frac{1263}{4}\lambda_1^2\delta_1^4 + \\ & +
		\frac{198799}{9}\lambda_1^2\delta_1^2\delta_{1,1} - 22\lambda_1\lambda_2^2H - 52\lambda_1\lambda_2\lambda_3 - 151\lambda_1\lambda_2H^3  + \\ & +
		54\lambda_1\lambda_2\delta_1\delta_{1,1} - 52\lambda_1\lambda_2\delta_{1,1,1} + \frac{2845}{162}\lambda_1H^5 + \frac{19415}{324}\lambda_1H^4\delta_1 + \\ & +
		\frac{59303}{324}\lambda_1H^3\delta_1^2 + \frac{10946}{243}\lambda_1H^2\delta_1^3 - \frac{66367}{162}\lambda_1H\delta_1^4 -
		\frac{903}{4}\lambda_1\delta_1^5 + \\ & + 18519\lambda_1\delta_1^3\delta_{1,1} - 22\lambda_2^2H^2 - \frac{1333}{18}\lambda_2H^4  +
		86\lambda_2\delta_1^2\delta_{1,1} + 112\lambda_3^2 + \\ & + 112\lambda_3\delta_{1,1,1} - \frac{407}{216}H^6 + \frac{14521}{648}H^5\delta_1 +
		\frac{40205}{648}H^4\delta_1^2 + \frac{147917}{972}H^3\delta_1^3 - \\ & - \frac{8563}{243}H^2\delta_1^4 -
		\frac{104075}{648}H\delta_1^5 - 63\delta_1^6 + \frac{11377}{2}\delta_1^4\delta_{1,1}
		\end{split}
		\end{equation*}
		
		\item[] 
		\begin{equation*}
			\begin{split}
				h(1)&=\frac{3}{4}\lambda_1^3H + \frac{13}{4}\lambda_1^2H^2 + \frac{9}{4}\lambda_1^2H\delta_1 + \frac{13}{4}\lambda_1H^3 +  \frac{13}{2}\lambda_1H^2\delta_1 +
				\frac{9}{4}\lambda_1H\delta_1^2 + \\ & + \frac{3}{4}H^4 + \frac{13}{4}H^3\delta_1+ \frac{13}{4}H^2\delta_1^2 + \frac{3}{4}H\delta_1^3
			\end{split}
		\end{equation*}
	
		\item[] 
		\begin{equation*}
		\begin{split}
		r&=-\frac{7}{81}\lambda_1^8H + \frac{247145}{2916}\lambda_1^7H^2 - \frac{39125}{81}\lambda_1^7H\delta_1 - 20800\lambda_1^7\delta_{1,1} - \\ & -
		\frac{1286}{243}\lambda_1^6\lambda_2H + \frac{1573727}{8748}\lambda_1^6H^3  -\frac{400579}{162}\lambda_1^6H^2\delta_1 -
		\frac{618187}{162}\lambda_1^6H\delta_1^2 + \\ & + \frac{31710800}{81}\lambda_1^6\delta_1\delta_{1,1} - \frac{736943}{729}\lambda_1^5\lambda_2H^2  -
		24288\lambda_1^5\lambda_2\delta_{1,1} - \frac{2193853}{8748}\lambda_1^5H^4 - \\ & - \frac{4516739}{729}\lambda_1^5H^3\delta_1 -
		\frac{35110427}{2916}\lambda_1^5H^2\delta_1^2  - \frac{3448919}{243}\lambda_1^5H\delta_1^3 + \\ & +
		\frac{253855904}{81}\lambda_1^5\delta_1^2\delta_{1,1} - \frac{136}{3}\lambda_1^4\lambda_2^2H - \frac{2054561}{729}\lambda_1^4\lambda_2H^3+ \\ & +
		133280\lambda_1^4\lambda_2\delta_1\delta_{1,1} - \frac{2986483}{2916}\lambda_1^4H^5 - \frac{40469125}{4374}\lambda_1^4H^4\delta_1 - \\ & -
		\frac{52534855}{2916}\lambda_1^4H^3\delta_1^2- \frac{33507299}{1458}\lambda_1^4H^2\delta_1^3 -
		\frac{17079953}{486}\lambda_1^4H\delta_1^4 + \\ & + \frac{246525952}{27}\lambda_1^4\delta_1^3\delta_{1,1} + \frac{26584}{9}\lambda_1^3\lambda_2^2H^2  
		- 9248\lambda_1^3\lambda_2^2\delta_{1,1} - \frac{364370}{243}\lambda_1^3\lambda_2H^4 + \\ & + 911024\lambda_1^3\lambda_2\delta_1^2\delta_{1,1} -
		1152\lambda_1^3\lambda_3^2 - 1152\lambda_1^3\lambda_3\delta_{1,1,1}  - \frac{315269}{324}\lambda_1^3H^6 - \\ & -
		\frac{6063527}{729}\lambda_1^3H^5\delta_1 - \frac{80352425}{4374}\lambda_1^3H^4\delta_1^2 -
		\frac{8710724}{2187}\lambda_1^3H^3\delta_1^3  - \\ & - \frac{26273782}{729}\lambda_1^3H^2\delta_1^4 -
		\frac{13515761}{243}\lambda_1^3H\delta_1^5 + \frac{41134658}{3}\lambda_1^3\delta_1^4\delta_{1,1} + 896\lambda_1^2\lambda_2^3H + \\ & +
		256\lambda_1^2\lambda_2^2\lambda_3  + \frac{225704}{27}\lambda_1^2\lambda_2^2H^3 - 11680\lambda_1^2\lambda_2^2\delta_1\delta_{1,1} +
		256\lambda_1^2\lambda_2^2\delta_{1,1,1} + \\ & + \frac{19484}{9}\lambda_1^2\lambda_2H^5 + 1716232\lambda_1^2\lambda_2\delta_1^3\delta_{1,1} -
		\frac{70385}{324}\lambda_1^2H^7 - \frac{716609}{162}\lambda_1^2H^6\delta_1 - \\ & - \frac{3175405}{243}\lambda_1^2H^5\delta_1^2 +
		\frac{14917697}{2187}\lambda_1^2H^4\delta_1^3 + \frac{27927485}{1458}\lambda_1^2H^3\delta_1^4 - \\ & -
		\frac{24157867}{486}\lambda_1^2H^2\delta_1^5 - \frac{12315655}{243}\lambda_1^2H\delta_1^6 +
		11373175\lambda_1^2\delta_1^5\delta_{1,1}  + \\ & + 1664\lambda_1\lambda_2^3H^2 - 1152\lambda_1\lambda_2^3\delta_{1,1} +
		192920/27\lambda_1\lambda_2^2H^4 + 7328\lambda_1\lambda_2^2\delta_1^2\delta_{1,1} + \\ & + 5824\lambda_1\lambda_2\lambda_3^2+
		5824\lambda_1\lambda_2\lambda_3\delta_{1,1,1} + \frac{66191}{27}\lambda_1\lambda_2H^6 + 1353816\lambda_1\lambda_2\delta_1^4\delta_{1,1} + \\ & +
		\frac{12985}{108}\lambda_1H^8 - \frac{104593}{81}\lambda_1H^7\delta_1  - \frac{1752295}{324}\lambda_1H^6\delta_1^2 +
		\frac{906349}{729}\lambda_1H^5\delta_1^3 + \\ & + \frac{57919459}{2187}\lambda_1H^4\delta_1^4 + \frac{6920350}{729}\lambda_1H^3\delta_1^5
		- \frac{53724649}{1458}\lambda_1H^2\delta_1^6 - \\ & - \frac{5743493}{243}\lambda_1H\delta_1^7 + 4958470\lambda_1\delta_1^6\delta_{1,1} -
		1152\lambda_2^3\lambda_3 + 768\lambda_2^3H^3 - \\ & - 1152\lambda_2^3\delta_1\delta_{1,1} - 1152\lambda_2^3\delta_{1,1,1} +
		\frac{16064}{9}\lambda_2^2H^5 + 9760\lambda_2^2\delta_1^3\delta_{1,1} + \frac{5399}{9}\lambda_2H^7 + \\ & + 391040\lambda_2\delta_1^5\delta_{1,1}  -
		10976\lambda_3^3 - 10976\lambda_3^2\delta_{1,1,1} + \frac{171}{4}H^9 - \frac{7903}{54}H^8\delta_1 - \\ & -
		\frac{304223}{324}H^7\delta_1^2 - \frac{365225}{486}H^6\delta_1^3  + \frac{4136302}{729}H^5\delta_1^4 +
		\frac{43734445}{4374}H^4\delta_1^5 - \\ & - \frac{3256102}{2187}H^3\delta_1^6 - \frac{14121601}{1458}H^2\delta_1^7 -
		\frac{1042615}{243}H\delta_1^8 + 887989\delta_1^7\delta_{1,1}
		\end{split}
		\end{equation*}

	\end{itemize}
\end{remark}

\bibliographystyle{plain}
\bibliography{Bibliografia}

\end{document}